    \theoremstyle{plain}
    \newtheorem*{thm}{Theorem}
    \newtheorem*{prop}{Proposition}
    \newtheorem*{lem}{Lemma}
    \newtheorem*{cor}{Corollary}
    \newtheorem*{exer}{Exercise}
    \theoremstyle{remark}
  \newcommand{\abs}[1]{\left\lvert #1 \right\rvert}  
  \newcommand{\LS}{$ \mathcal{LS} $}
  \newcommand{\tinyzero}{\scalebox{0.47}{0}}
   \title{\bfseries Shortest Paths on Cubes }
   \author{Richard Goldstone, Rachel Roca, and Robert Suzzi Valli\\Department of Mathematics\\Manhattan College, Riverdale, NY}
   \date{March 7, 2020} 
\begin{document}

\maketitle

\begin{abstract}
	In 1903, noted puzzle-maker Henry Dudeney published \emph{The Spider and the Fly} puzzle, which asks for the shortest path along the surfaces of a square prism between two  points (source and target) located on the square faces, and surprisingly showed that the shortest path traverses five faces. Dudeney's source and target points had very symmetrical locations; in this article, we allow the source and target points to be anywhere in the interior of opposite faces, but now require the square prism to be a cube.  In this context, we find that, depending on source and target locations, a shortest path can traverse either three or four faces, and we investigate the conditions that lead to four-face solutions and estimate the probability of getting a four-face shortest path.  We utilize a combination of numerical calculations, elementary geometry, and transformations we call corner moves of cube unfolding diagrams
\end{abstract}

\section{Introduction}
On June 14, 1903, noted puzzle-maker Henry Dudeney published \emph{The Spider and the Fly} puzzle in the \emph{Weekly Dispatch,} a British newspaper, and later included the puzzle in \emph{The Canterbury Puzzles,} his 1908 anthology \cite{heD}.  The puzzle asked for the shortest distance along the surfaces of a room between the positions~$A$ (spider) and~$B$ (fly) indicated in Figure~\ref{f:spidfly}, taken from \emph{The Canterbury Puzzles.} (The points~$ A $ and~$ B $ are each one foot from an edge on the vertical center line of the $ 12\,\text{ft} \times 12\,\text{ft} $ square faces.)
\begin{figure}[h]
\centering{
\begin{overpic}[scale=0.13]{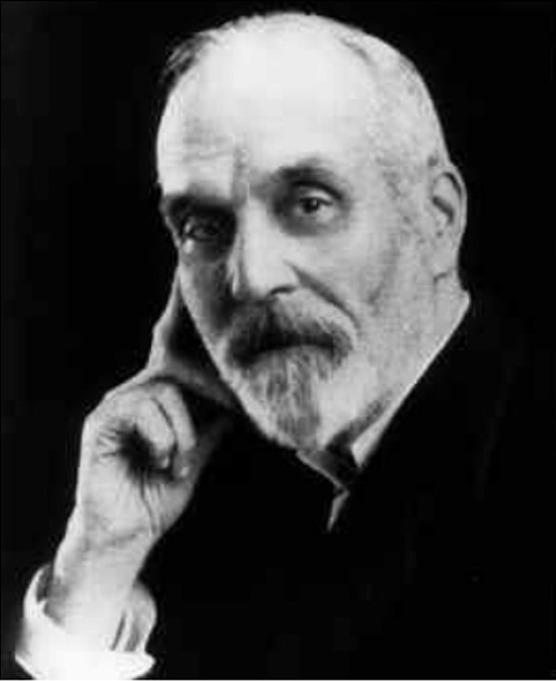}
	\put(6,-10){\footnotesize Henry Dudeney}
\end{overpic}	
\qquad
\begin{overpic}[scale=0.39]{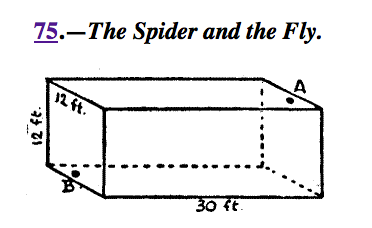}\end{overpic}
}
\caption{Dudeney's spider and fly problem \label{f:spidfly}}
\end{figure} 

Dudeney's solution (Figure~\ref{f:MVdata}\,(a)), as well as the analyses of analogous shortest path problems, involve cutting along some of the edges of the room and {\it unfolding} the resulting figure (which must be a ``single piece") into the plane. Unfoldings of this type are illustrated in  Figure~\ref{f:MVdata}\,(a) and analyzed in~\cite{rGrSV}.  The spider and fly points in each unfolding are joined with a straight line whenever possible, and the shortest of the straight lines is chosen. Dudeney's option number~4 in Figure~\ref{f:MVdata}\,(a) shows, surprisingly, that a route traversing five faces of the rectangular solid is actually the shortest path.

This problem has passed the test of time.  An internet search will reveal various video expositions, and it has been the subject of a 2012 Classroom Capsule in The College Mathematics Journal by K.~E.~Mellinger and R.~Viglione~\cite{kMrV}.  In that article, the authors retain the symmetry of Dudeney's spider-fly locations and find that a shortest path may traverse either three or four faces rather than five, depending on the dimensions of the solid.   Figure~\ref{f:MVdata}\,(b) summarizes, in terms of rectangular solid dimensions~$ x \times x \times y $ ($ x>2 $),  Mellinger and Viglione's results on the number of faces traversed by a shortest path linking Dudeney's spider-fly locations.  
\begin{figure}[h]
	\centering{
		\begin{overpic}[scale=0.33]{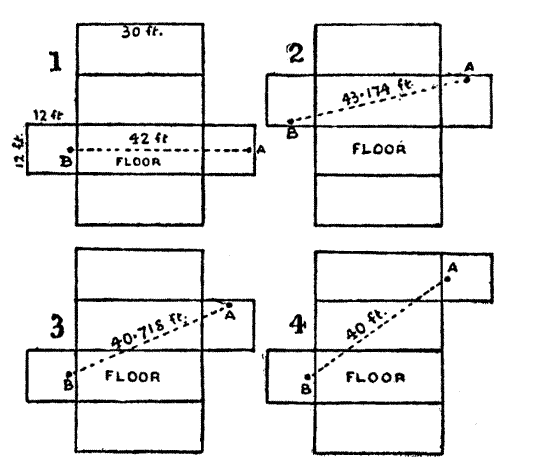}
			\put(45,-3){\small (a)}
		\end{overpic}
		\quad
		\begin{overpic}[scale=0.2]{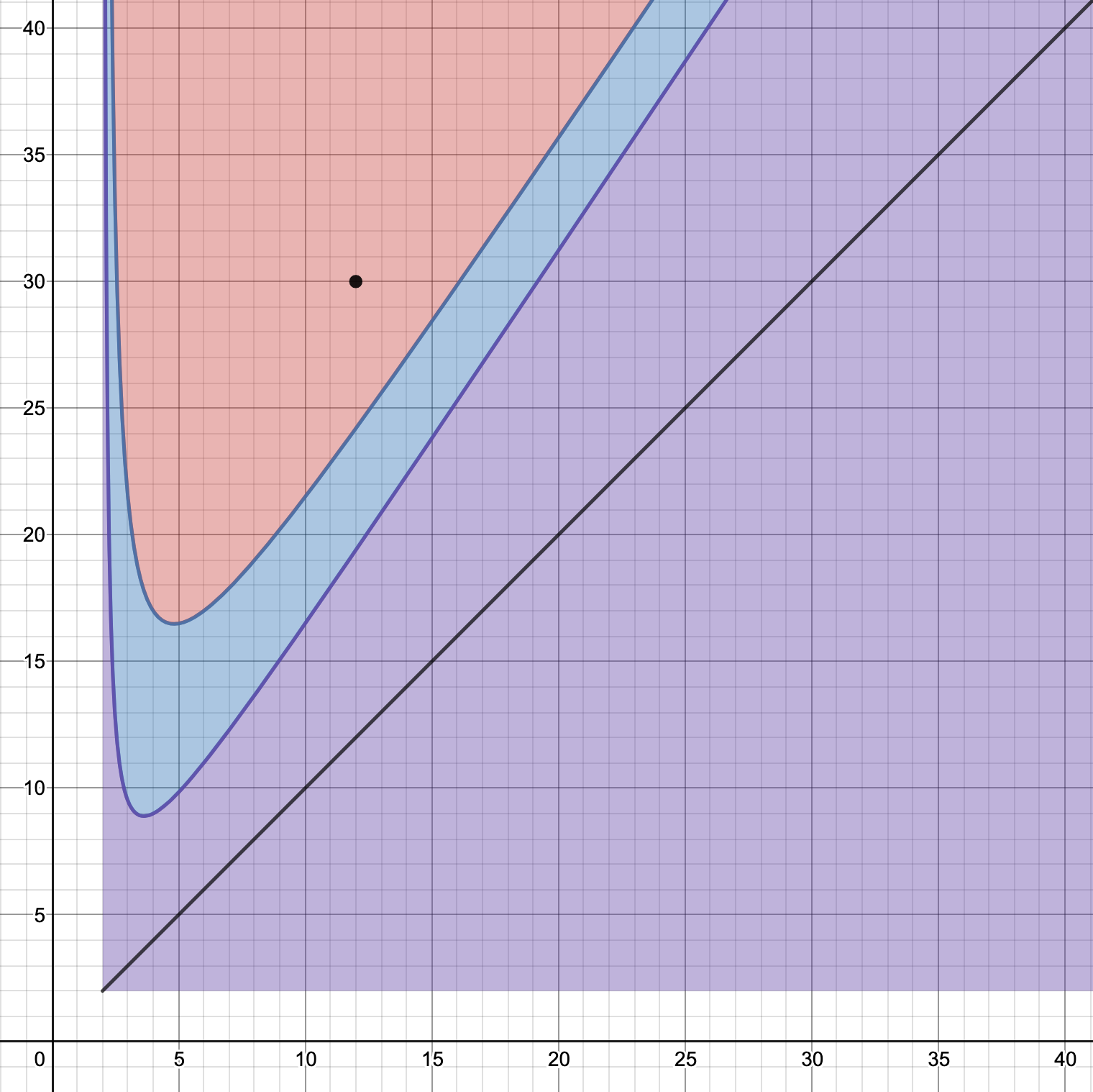} 
			\put(24,82){\tiny Dudeney }
			\put(24,78){\tiny  example}
			\put(15,65){\tiny five faces}
			\put(12.5,39){\tiny four }
			\put(11.5,35){\tiny  faces}
			\put(50,30){\tiny three faces}
			\put(69,65){\tiny $ y=x $ (cube)}
			\put(55,17){\tiny $ x=\text{square face side} $}
			\put(55,12){\tiny $ y=\text{``length'' side} $}
			\put(50,-4){\small (b)}
		\end{overpic}	
	}
	\caption{Solutions and number of faces traversed \label{f:MVdata}}
\end{figure} 

 The symmetry of the spider-fly locations used by Dudeney and Mellinger-Viglione restrict the complexity of the shortest path problem. As a next step, we consider unconstrained spider-fly positions in the interiors of opposite faces, i.e.\ not on edges or at vertices.  In order to focus on the effects of these general positions,  we have eliminated the role of the solid's ``rectangularity'' by working only with cubes.  In Figure~\ref{f:MVdata}\,(b), the region containing the line~$ y=x $ shows that if we stick with the Dudeney  spider-fly positions in a cube,  the shortest path will always be a 3-face path regardless of the cube dimensions. So, for the cube case, any ``interesting'' phenomena will have to occur for non-Dudeney spider-fly locations. Recent published works have studied shortest path problems on regular polyhedra, including the cube~\cite{jAdA,dDvDcTjY,dF2,dF1, dFeF}. With different goals, they use more advanced combinatorial and geometric techniques than we use here.

From now on, we refer to the location of the spider as the \emph{source} and the location of the fly as the \emph{target} of the path.  Dudeney and Mellinger-Viglione have the four unfoldings of Figure~\ref{f:MVdata}\,(a) to consider, but in principle we have to deal with twelve unfoldings---see the right-hand diagram of Figure~\ref{f:unfgrid}.  To handle this additional complexity, we've developed conceptual tools and have relegated long but elementary algebraic computations to computer algebra systems.

\begin{figure}[h]
	\centering{
		\begin{overpic}[scale=0.15]{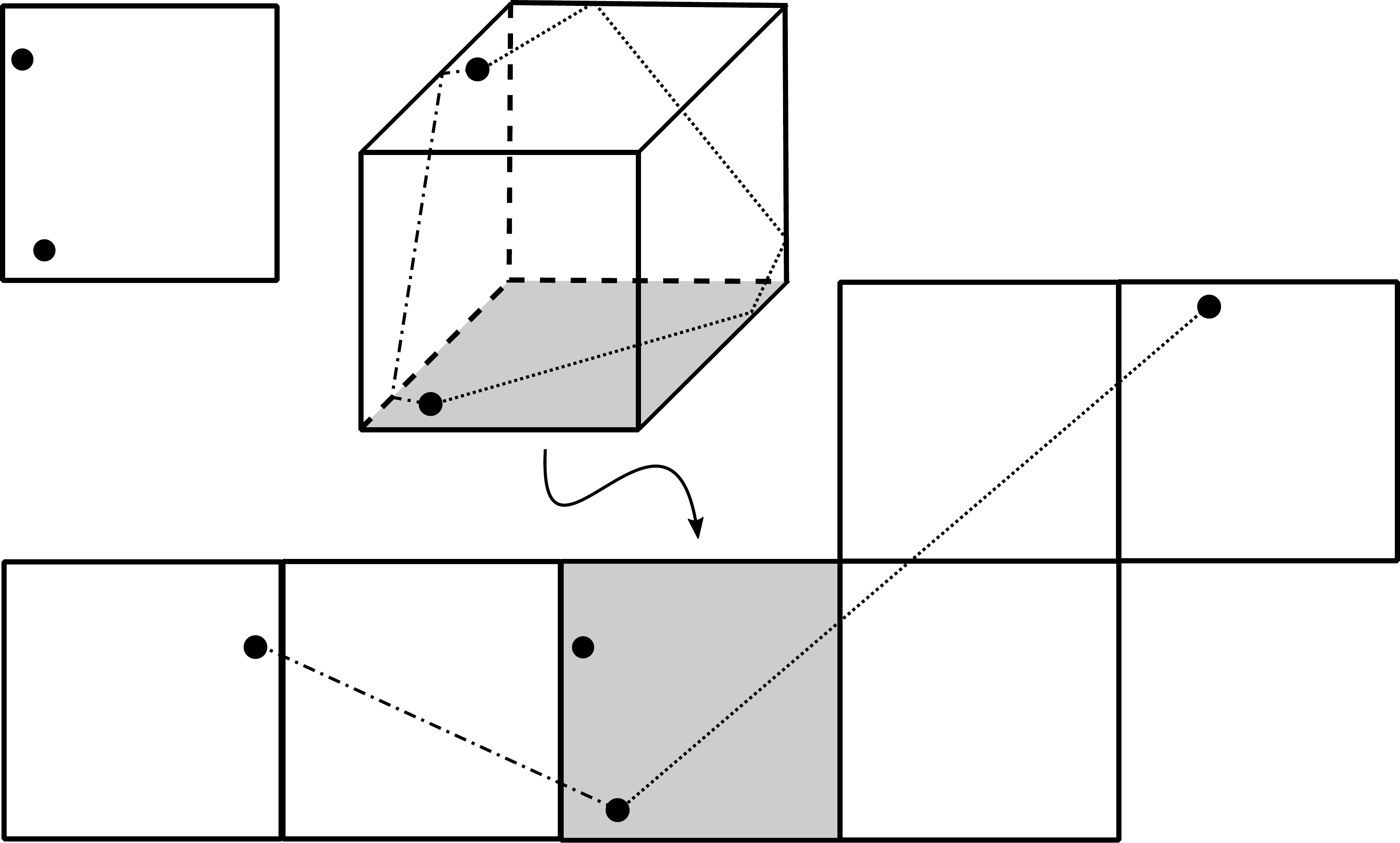} 
			\put(1.5,35){\footnotesize Top view}
			\put(5,41.5){\tiny $S$ }
			\put(3,55.3){\tiny $T$ }
			\put(31,32.5){\tiny $S$}
			\put(32,51){\tiny $T$ }	
			\put(13,15.5){\tiny $T_1$ }
			\put(43,15.5){\tiny $T$}
			\put(43,4){\tiny $S$ }
			\put(88,36){\tiny $T_2$ }	
			\put(47,1){\tiny bottom}
			\put(71,1){\tiny right}
			\put(71,21){\tiny back}
			\put(93.5,21){\tiny top}
			\put(21,1){\tiny left}
			\put(1,1){\tiny top}
		\end{overpic}	
	}
	\caption{A grid of two partial unfoldings, each with a locally shortest path   \label{f:ur1}}
\end{figure}  

Figure~\ref{f:ur1} illustrates some constructs.  Rather than following Dudeney and trying to separately depict the twelve unfoldings with the source and target points properly situated in each one, we use a standard construction, combining all the unfoldings into a single planar array of \emph{partial} unfoldings that we call an \emph{unfolding grid}.  (The term ``partial'' emphasizes that we only need the portion of an unfolding whose faces contain a straight line path from source to target.)  We can think of the unfolding grid as being generated by ``rolling'' the cube over its edges and keeping track of the face that lands in the plane. This model provides geometric constraints that automate proper location of the source and target points in the various unfoldings. 

We start with the cube in a fixed orientation with the source face down and resting on the central square of the unfolding grid. We refer to this central square as the \emph{base face} of the unfolding grid.  In Figure~\ref{f:ur1}, the source and target points are marked ``$ S $'' and ``$ T $''. The bottom (source) face of the cube is shaded, and the unfolding grid base face where the bottom face of the cube rests is  shaded to correspond.  Two paths are illustrated, on the cube and on the unfolding grid,  where the unfolded images of~$ T $ are labeled $ T_1 $ and~$ T_2 $. 

It can happen that the straight line from source to target point image is not contained in the interior of its unfolding, instead running through a vertex or having a portion outside the sequence of rolled faces in the unfolding diagram (see Figure~\ref{f:pseudp} for an example).   We refer to such paths as \emph{pseudopaths}. In the next sections, we shall see that adjustments we call \emph{corner moves} replace pseudopaths with paths in the interior of an unfolding, and these new paths are always shorter than the original pseudopath, so a pseudopath can never be a solution to the shortest path problem on the cube surface. We call  the paths that are not pseudopaths \emph{locally shortest paths}, or \emph{\LS-paths,} because each is the shortest path from source to target in the interior of the given unfolding.

The longer path in the figure, a 4-face \LS-path, is obtained by edge-rolling the cube right, up, and right (RUR).  The notations ``bottom,'' ``right,'' ``back,'' and ``top'' on the grid squares indicate which of the original cube faces contact those  squares.  The shorter path, a 3-face \LS-path, is obtained by rolling the cube left two times (LL), and the grid faces are analogously designated.   

The shaded square in Figure~\ref{f:ur1} together with the three squares to its right are a partial unfolding, as is the shaded square together with the two squares to its left.  The entire diagram is not an unfolding, even though it has six squares, which is the required number of cube faces.  The entire diagram is also not the full unfolding grid, which appears later in Figure~\ref{f:unfgrid}.  When we have that grid, we use it to find all \LS-paths, and the shortest of these is (or are) the shortest paths. (This relies on the assumption, mentioned in the section on applications of corner moves and proved in Appendix~A, that the shortest path must be one of the \LS-paths, i.e. must be a straight line in the interior of at least one of the partial unfoldings.)  

Figure~\ref{f:ur1} also contains a square labeled ``top view''.  This is a view of the cube looking down from the top onto the target face, with the target point~$ T $ projected downwards and so also appearing on the source face along with the source point~$ S $.  This is a compact way to keep track of source and target locations, and  we have incorporated it into the (shaded) base face of the unfolding grid.	

Warning: rolling and unfolding are not fully equivalent in this diagram, and it is the unfoldings we care about.  The failure of equivalence comes from the fact that, for instance, rolling right and then up is not the same as rolling up and then right (so $ \text{RU} \ne \text{UR} $).  The cube comes to rest on the same square in the unfolding grid either way, but the cube face in contact with the square is the back face if we do right-up but the right face if we do up-right. See Appendix~A for further commentary on the interpretation of pseudopaths in the unfolding grid.

\section{Combinatorics of unfoldings}
Our results come from a combination of numerical calculations and conceptual principles that simplify and guide the numerical work.  This section is about the conceptual part, which obtains relationships among various unfoldings by means of modifications we call \emph{corner moves}.  A corner move is a transformation taking one unfolding to another.  It is associated with a vertex of the cube that has exactly one edge cut in the unfolding. The corner move  corresponds to changing which of the three edges at the vertex is cut.  Figure~\ref{f:cm1} illustrates, at the top, the unfoldings resulting from cutting one of three edges at a vertex and, underneath, the corner moves transforming one unfolding to the other---any order of following arrows to the same target results in the same unfolding.
\begin{figure}[h]
	\centering{
	\begin{overpic}[scale=0.12]{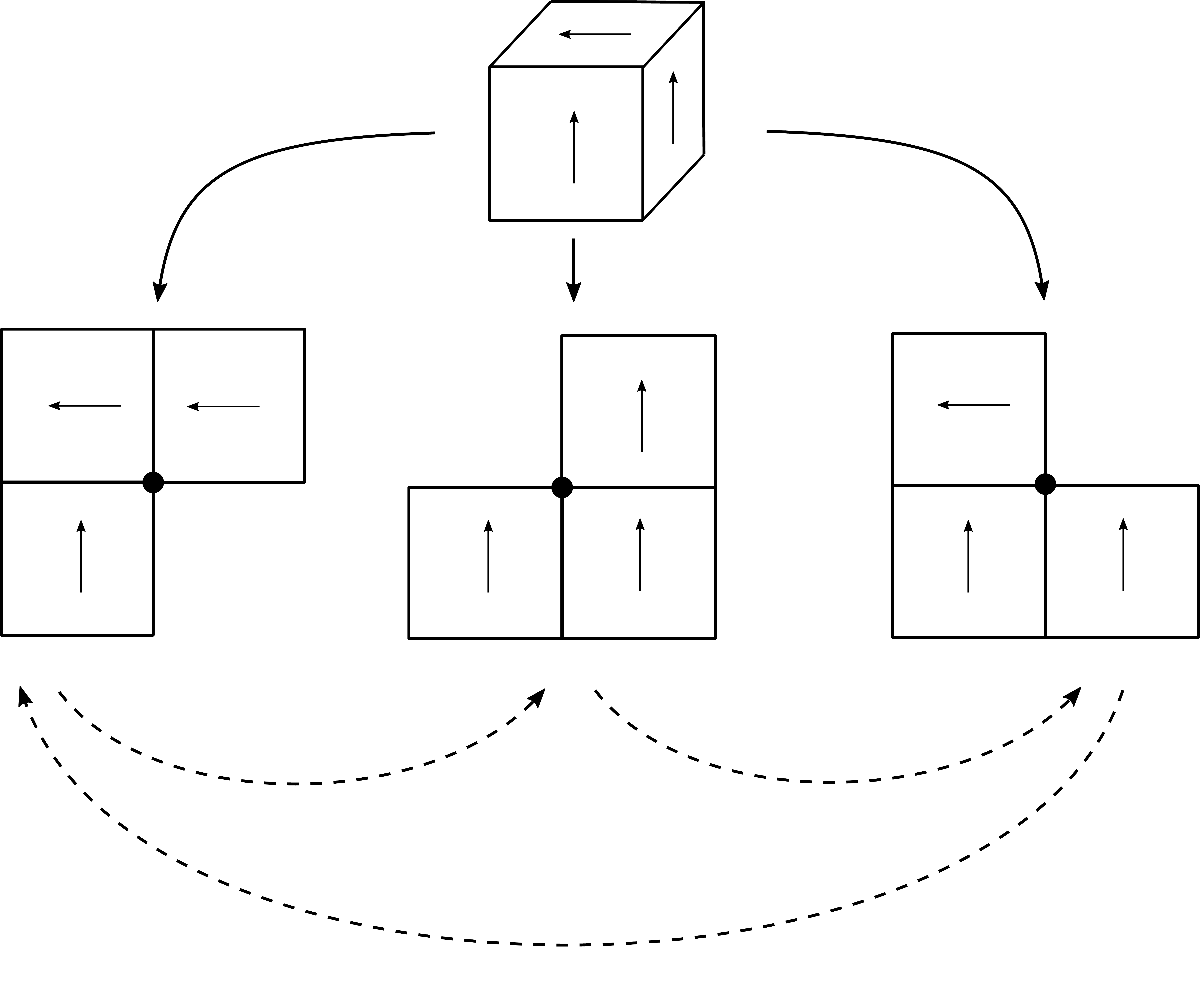} 
		\put(51.5,70){\tiny 1}
		\put(49,75.5){\tiny 2}
		\put(54,80.7){\tiny 3}
		\put(17,65){\footnotesize cut edge 1}
		\put(49,61){\footnotesize cut  edge 2}
		\put(66,65){\footnotesize cut  edge 3}
		\put(16,27){\footnotesize cut edge 2,}
		\put(12,23.5){\footnotesize rotate bottom left}
		\put(15,20.5){\footnotesize square right}
		\put(62,27){\footnotesize cut edge 3,}
		\put(58,23.5){\footnotesize rotate top right}
		\put(62,20.5){\footnotesize square left}
		\put(33.5,12){\footnotesize cut edge 1, rotate }
		\put(29,9){\footnotesize bottom right square up}
		\put(13.5,35){\tiny 1}
		\put(21,41){\tiny 1}
		\put(4,44.5){\tiny 2}
		\put(13.5,53){\tiny 3}
		\put(47.5,35){\tiny 1}
		\put(56,44){\tiny 3}
		\put(38,44){\tiny 2}
		\put(44.5,53){\tiny 2}
		\put(88,35){\tiny 1}
		\put(78,44){\tiny 2}
		\put(96,44){\tiny 3}
		\put(88,53){\tiny 3}
	\end{overpic}
    }
\caption{Corner Moves \label{f:cm1}}
\end{figure}

Corner moves can be used to generate all the unfoldings of the cube as illustrated in Figure~\ref{f:11u}. Pairs of opposite faces are marked, and the single arrows indicate a corner move obtained by rotating a single square around a pivot vertex.  The one set of double arrows denotes a corner move that rotates a three-square group around a pivot vertex.
\begin{figure}[h]
	\centering{
		\begin{overpic}[scale=0.35]{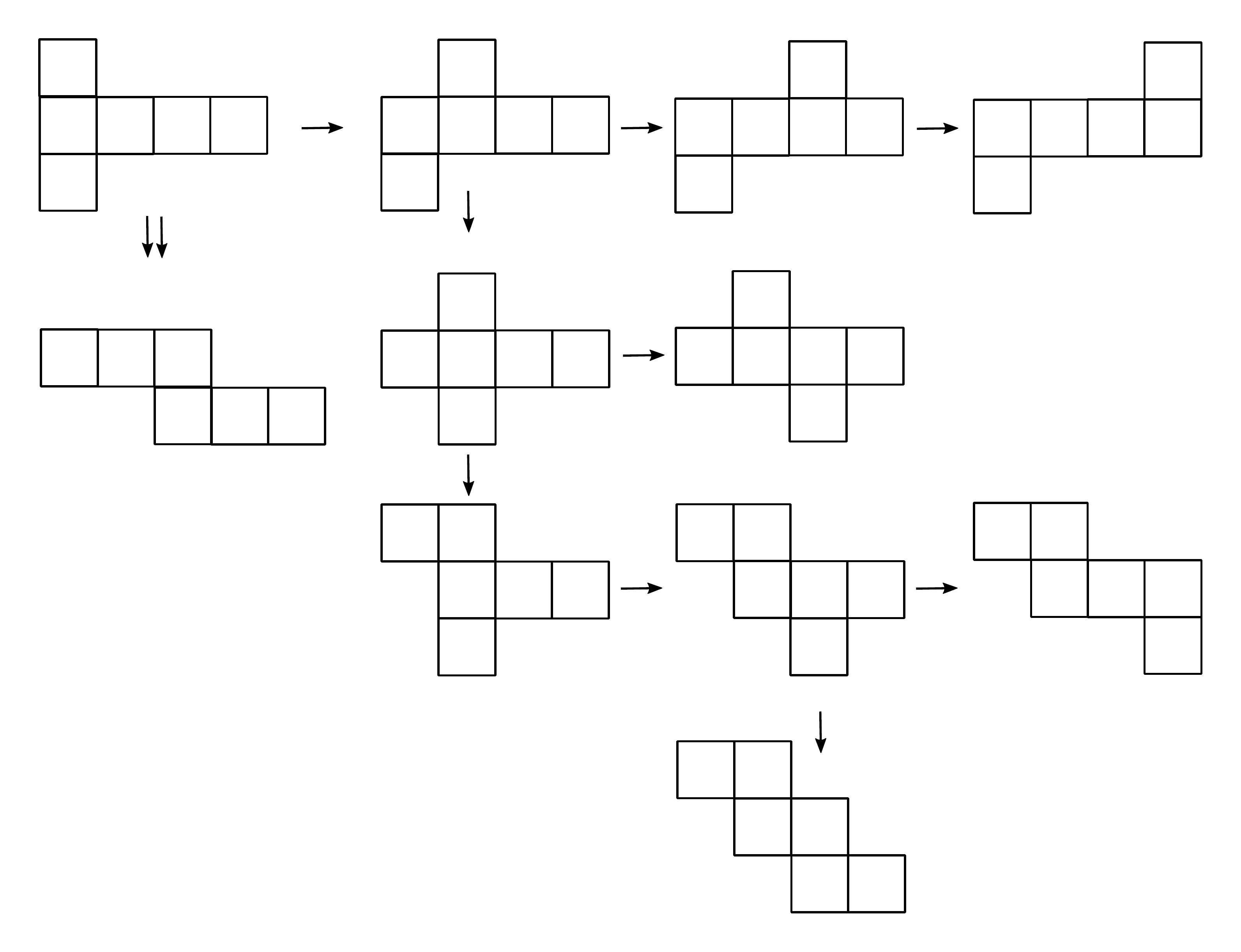} 
			\put(4.5,70){\footnotesize $ \heartsuit $ }
			\put(4.5,61){\footnotesize $ \heartsuit $ }
			\put(4.5,65.5){\footnotesize $ \spadesuit  $ }
			\put(13.5,65.5){\footnotesize $ \spadesuit  $ }
			\put(36.5,72){\footnotesize  \rotatebox{-90}{$\heartsuit$} }
			\put(32,61){\footnotesize $ \heartsuit $ }
			\put(32,65.5){\footnotesize $ \spadesuit  $ }
			\put(41.2,65.5){\footnotesize $ \spadesuit  $ }
			\put(64.8,72){\footnotesize \rotatebox{180}{$\heartsuit$} } 
			\put(55.7,61){\footnotesize $ \heartsuit $ }
			\put(55.7,65.5){\footnotesize $ \spadesuit  $ }
			\put(64.8,65.5){\footnotesize $ \spadesuit  $ }
			\put(94,70){\footnotesize \rotatebox{90}{$\heartsuit$} }
			\put(79.8,61){\footnotesize $ \heartsuit $ }
			\put(79.8,65.5){\footnotesize $ \spadesuit  $ }
			\put(89,65.5){\footnotesize $ \spadesuit  $ }
			\put(4.5,48.5){\footnotesize \rotatebox{-90}{$\heartsuit$} }
			\put(13.7,48.5){\footnotesize \rotatebox{-90}{$\heartsuit$} }
			\put(9,48.5){\footnotesize \rotatebox{-90}{$\spadesuit$} }
			\put(18.3,42){\footnotesize $ \spadesuit  $ }
			\put(36.5,51.3){\footnotesize $ \heartsuit $ }
			\put(36.5,42){\footnotesize \rotatebox{90}{$\heartsuit$} }
			\put(41.2,47){\footnotesize $ \spadesuit $ }
			\put(32,47){\footnotesize $ \spadesuit $ }
			\put(60.3,51.3){\footnotesize $ \heartsuit $ }
			\put(64.8,44){\footnotesize \rotatebox{180}{$\heartsuit$} }
			\put(55.7,47){\footnotesize $ \spadesuit $ }
			\put(64.8,47){\footnotesize $ \spadesuit $ }
			\put(36.5,32.5){\footnotesize $ \heartsuit $ }
			\put(36.5,23.5){\footnotesize \rotatebox{90}{$\heartsuit$} }
			\put(32,34.5){\footnotesize \rotatebox{-90}{$\spadesuit$} }
			\put(41.2,28){\footnotesize $ \spadesuit $ }
			\put(60.3,32.5){\footnotesize $ \heartsuit $ }
			\put(64.9,25.5){\footnotesize \rotatebox{180}{$\heartsuit$} }
			\put(55.7,34.5){\footnotesize \rotatebox{-90}{$\spadesuit$} }
			\put(64.9,28){\footnotesize $ \spadesuit $ }
			\put(84.2,32.5){\footnotesize $ \heartsuit $ }
			\put(93.5,25.5){\footnotesize \rotatebox{-90}{$\heartsuit$} }
			\put(80,34.5){\footnotesize \rotatebox{-90}{$\spadesuit$} }
			\put(88.8,28){\footnotesize $ \spadesuit $ }
			\put(60.3,13.5){\footnotesize $ \heartsuit $ }
			\put(64.9,6.4){\footnotesize \rotatebox{180}{$\heartsuit$} }
			\put(55.7,13.5){\footnotesize $ \spadesuit $ }
			\put(64.9,9){\footnotesize $ \spadesuit $ }
		\end{overpic}	
	}
	\caption{The eleven congurence classes of unfoldings via corner moves \label{f:11u}}
\end{figure} 
Eleven incongruent shapes are depicted, and it has been known for a long time that these are all possible incongruent shapes.  An elementary account of this fact is in The College Mathematics Journal,~\cite{rGrSV}. 

Inspection of these possibilities indicates that, up to symmetry, there is a unique unfolded configuration for each type of \LS-path between opposite faces, with ``type'' referring to how many faces are traversed.  These configurations are shown in Figure~\ref{f:fapaty}.
\begin{figure}[h]
	\centering{
		\begin{overpic}[scale=0.45]{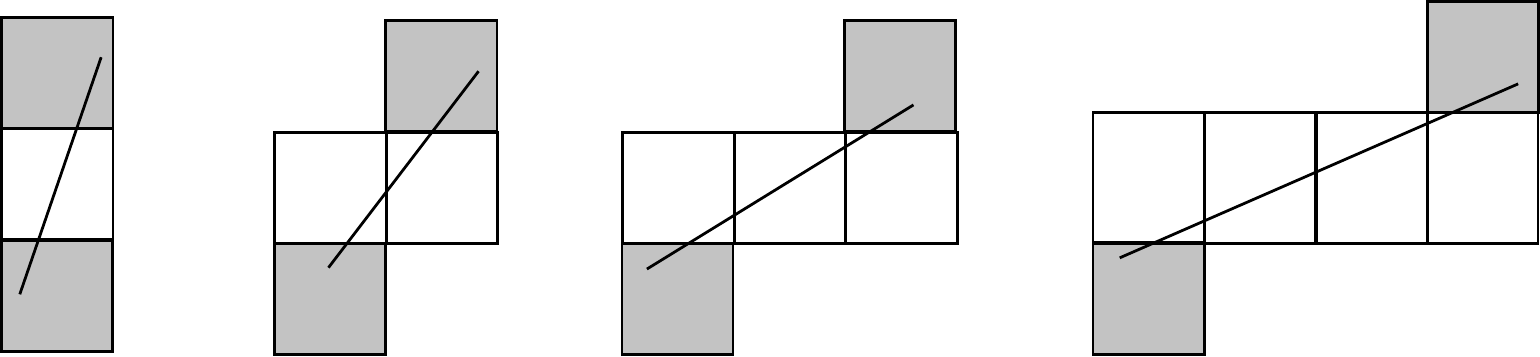} 
			\put(0.9,10){\footnotesize 3}
			\put(21,10){\footnotesize 4}
			\put(51.9,8.5){\footnotesize 5}
			\put(88,8.5){\footnotesize 6}
		\end{overpic}
	}
	\caption{\label{f:fapaty} Generic \LS-path configurations}
\end{figure}
The numbers next to the sample paths indicate how many faces have been traversed.

\section{Euclidean geometry of corner moves}
Corner moves give us a way to compare the lengths of straight-line paths in related unfoldings.  The details are in Figure~\ref{f:cmg}.  Begin with an unfolding containing the leftmost square, the three shaded squares, the source point~$ S $, and a target point image~$ T_1 $ with the \LS-path of length~$ d_1 $.  (The ``torn off'' piece indicates a possible location where additional unfolding squares could be bound.) 
\begin{figure}[h]
	\centering{
		\begin{overpic}[scale=0.12]{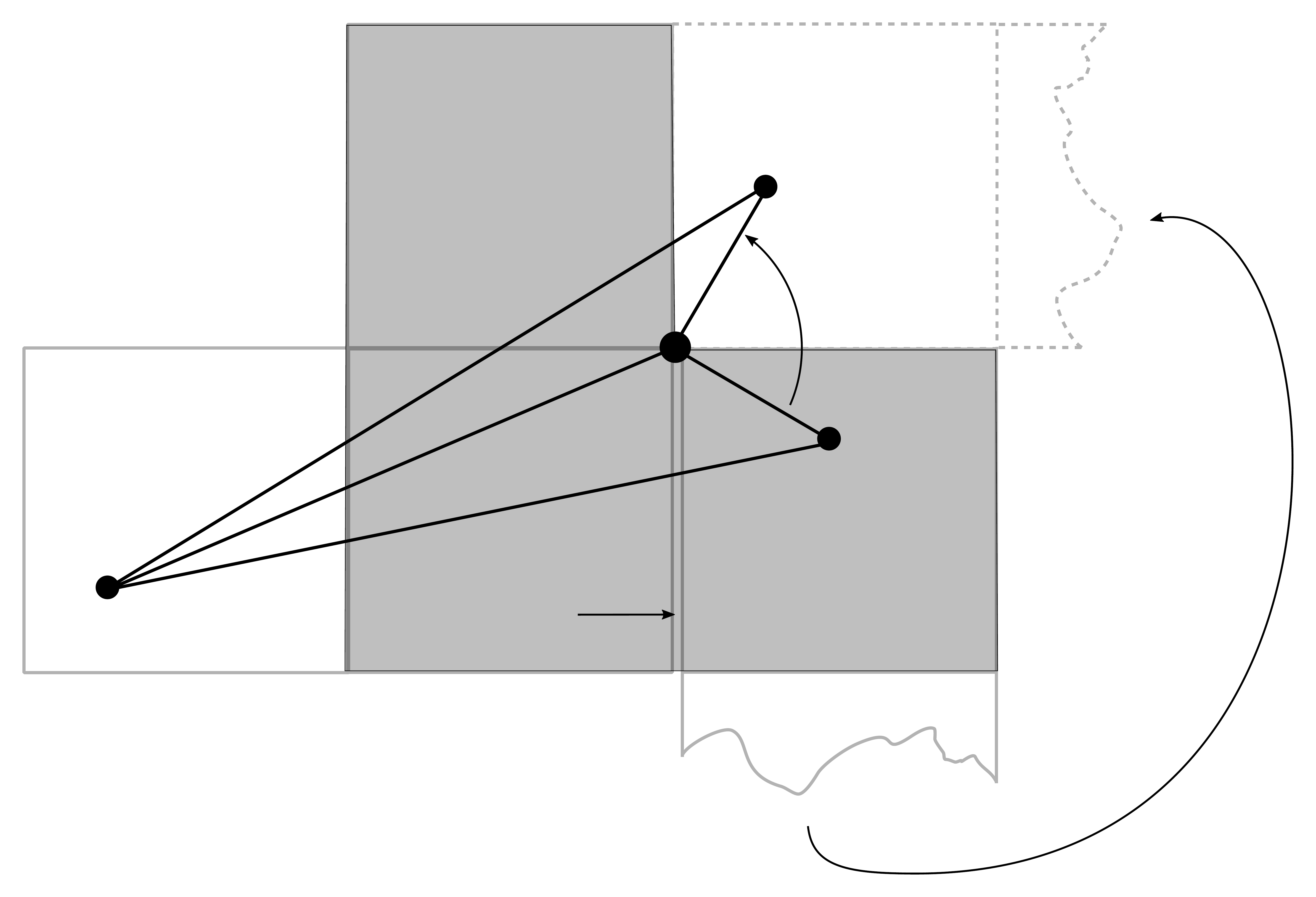} 
			\put(53,42){\footnotesize $ P $ }
			\put(4.5,22){\footnotesize $ S $ }
			\put(65,33){\footnotesize $ T_1 $ }
			\put(60,55){\footnotesize $ T_2 $ }
			\put(38,26){\footnotesize $ d_1 $ }
			\put(38,45){\footnotesize $ d_2 $ }
			\put(29,20.5){\footnotesize edge cut }
			\put(77,31){\footnotesize pivot lower}
			\put(77,27){\footnotesize right square}
			\put(77,23){\footnotesize $ 90^\circ $ around}
			\put(77,19){\footnotesize $ P $}
			\put(61.5,44){\footnotesize $ 90^\circ $}
		\end{overpic}	
	}
	\caption{Corner move geometry \label{f:cmg}}
\end{figure} 
Using the indicated edge cut, implement the corner move that rotates the lower right-hand shaded square up around~$ P $. This rotation produces a new unfolding with a new target point image~$ T_2 $ and a corresponding \LS-path of length~$ d_2 $.
 
Since we are looking for the shortest path, we'd like to know how~$ d_2 $ compares to~$ d_1 $.  The applicable geometry is the extended side-angle-side theorem (sometimes called the \emph{hinge theorem}), which says that if two sides of one triangle are congruent to two sides of another triangle, then the size relation between the included angle of the first triangle  and the included angle of the second triangle is the same as the size relation between the third side of the first triangle and the third side of the second triangle.   Applied to the diagram in Figure~\ref{f:cmg}, the SAS theorem says that the size relation between~$ d_1 $ and~$ d_2 $ is the same as the size relation between the angles~$ \angle SPT_1 $ and~$\angle SPT_2 $.  In view of the indicated~$ 90^\circ $ angle, we have $ d_1=d_2 $ if and only if $ \angle SPT_1 = 135^\circ =  \angle SPT_2 $.  Fixing our attention on the original unfolding, we can say that \emph{the indicated corner move will result in a shorter path if and only if $ \angle SPT_1 > 135^\circ $}, and we call~$ \angle SPT_1 $ the \emph{decision angle} for the original path.

There is a problem with corner moves: they may result in pseudopaths---see Figure~\ref{f:pseudp}.  The pseudopaths that concern us are ones obtainable from the examples in Figure~\ref{f:fapaty} by adjusting the illustrated path until some portion of it---but not the endpoints in source and target faces---lies outside the unfolding.
\begin{figure}[h]
	\centering{
		\begin{overpic}[scale=0.3]{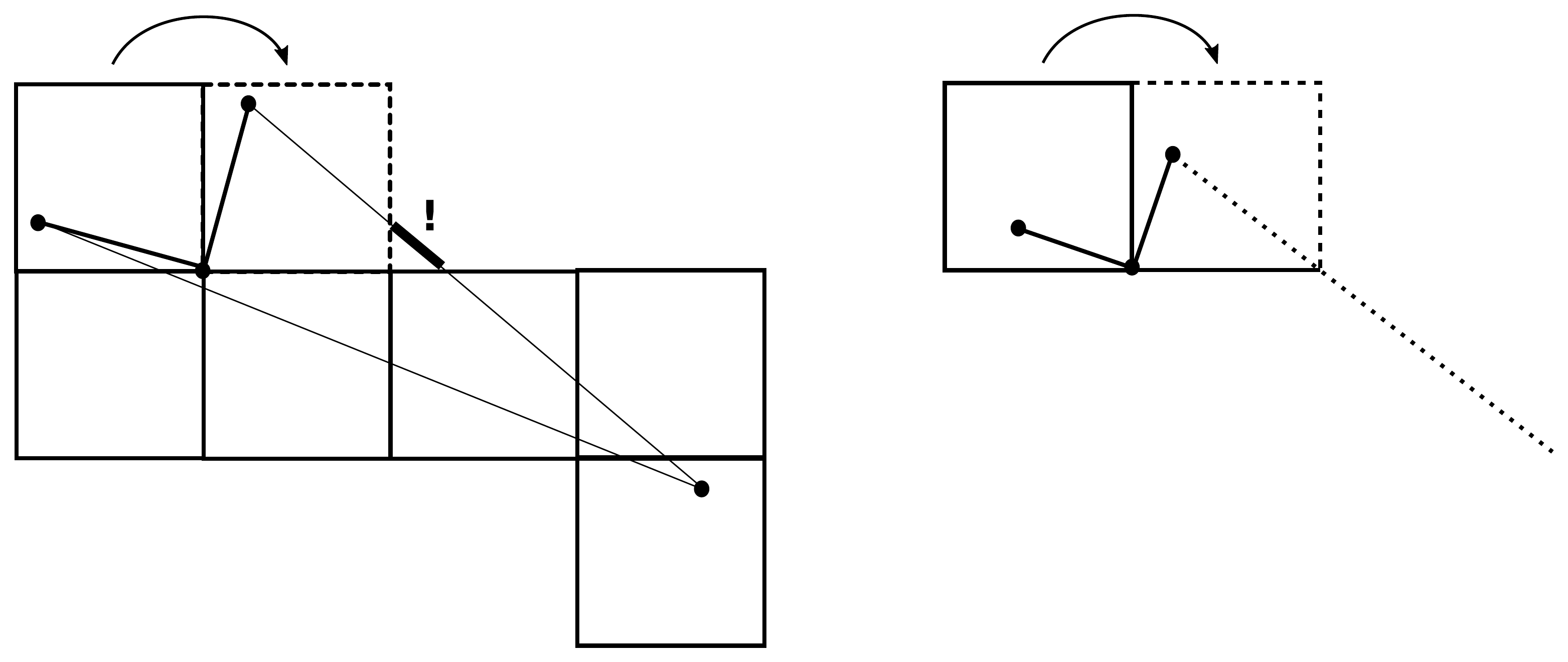} 
			\put(2.5,29){\footnotesize $ T_1 $ }
			\put(19,33.5){\footnotesize $ T_2 $ }
			\put(45,8){\footnotesize $ S $ }
			\put(63,29){\footnotesize $ T_1 $ }
			\put(73,34){\footnotesize $ T_2 $ }
			\put(71,20){\footnotesize path source}
			\put(71,17){\footnotesize locations}
			
			\put(93,25){\footnotesize pseudopath}
			\put(93,22){\footnotesize  source}
			\put(93,19){\footnotesize  locations}
			\put(23,0){\footnotesize (a) }
			\put(80,0){\footnotesize (b) }
		\end{overpic}
	}
	\caption{\label{f:pseudp}Corner moves resulting in pseudopaths}
\end{figure}
 We have been willing to name pseudopaths, rather than ban corner moves that produce them, because pseudopaths are useful---see the section on applications of corner move geometry, where pseudopaths play an essential role.

\begin{figure}[h]
	\centering{
		\begin{overpic}[scale=0.35]{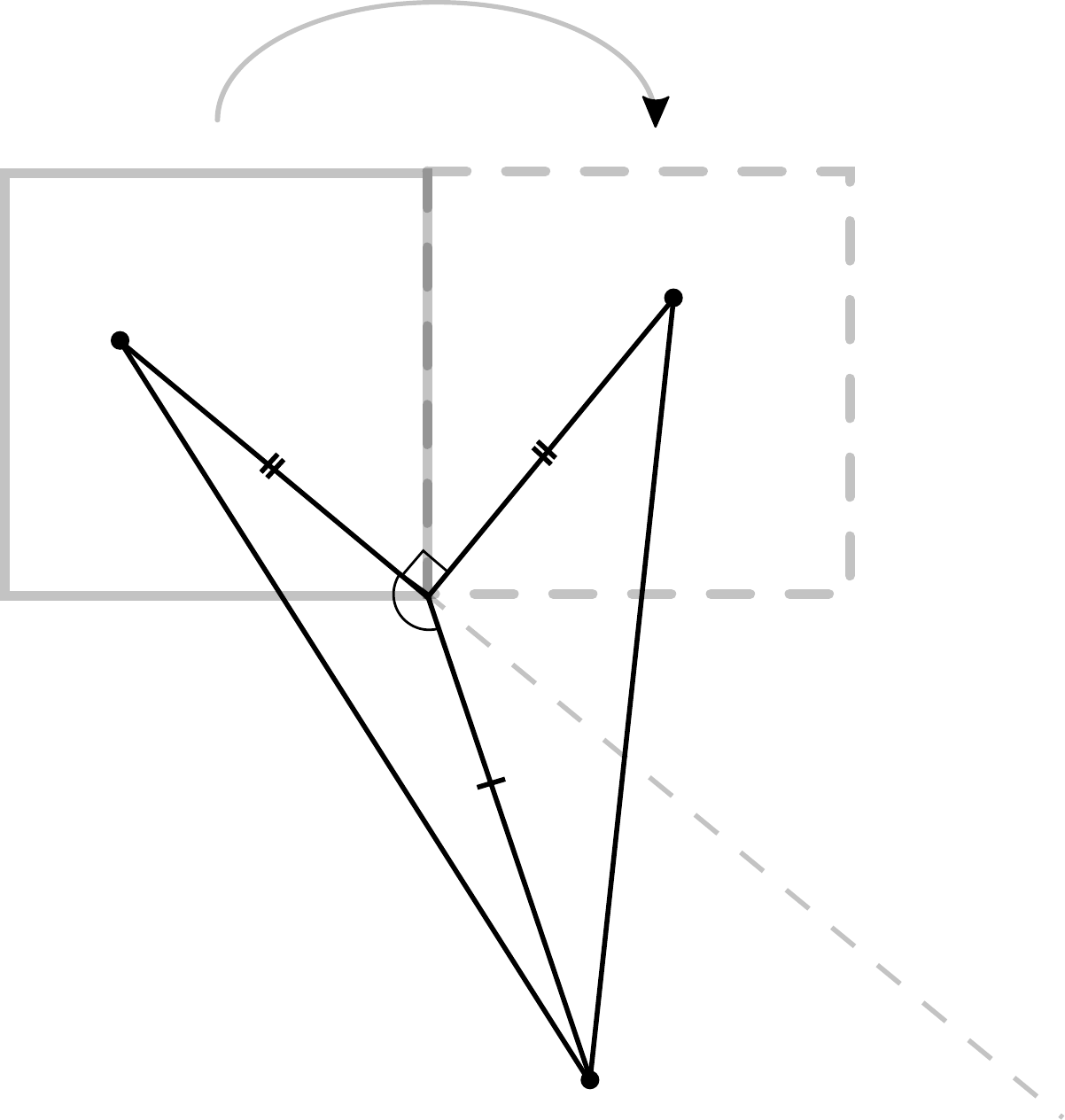} 
			\put(54,1){\footnotesize $S$}
			\put(6,73){\footnotesize $T_1$}
			\put(60,75.5){\footnotesize $T_2$}
			\put(43,48){\footnotesize $P$}
			\put(4,30){\footnotesize path before}
			\put(4,24){\footnotesize corner move}
			\put(60,36){\footnotesize path after}
			\put(60,30){\footnotesize corner move}
			\put(35,0){\footnotesize (a)}
		\end{overpic}
		\quad
		\begin{overpic}[scale=0.35]{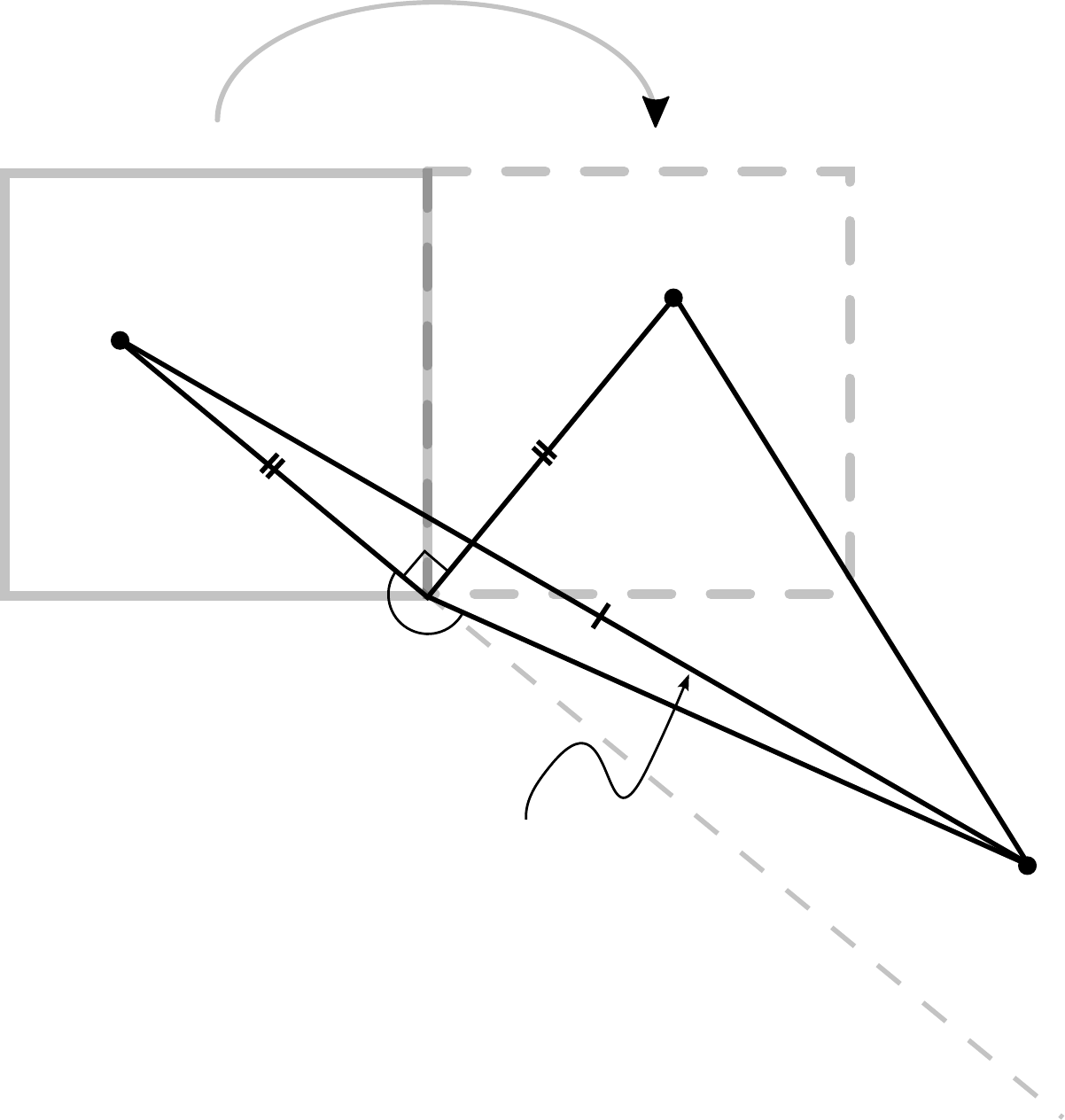} 
			\put(93,20){\footnotesize $S$}
			\put(6,73){\footnotesize $T_1$}
			\put(60,75.5){\footnotesize $T_2$}
			\put(33,38){\footnotesize $P$}
			\put(33,21){\footnotesize pseudopath }
			\put(33,15){\footnotesize before corner move}
			\put(71,65){\footnotesize pseudopath}
			\put(71,59){\footnotesize  after corner move}
			\put(48,0){\footnotesize (b)}
		\end{overpic}
	}
	\caption{\label{f:cmgeom} Geometry of paths and pseudopaths}
\end{figure}

In Figure~\ref{f:cmgeom}\,(a), a corner move takes an \LS-path to an \LS-path.  In diagram~(b), we have a corner move applied to a pseudopath and resulting in another pseudopath.   The SAS argument for the latter situation compares the angles~$ \angle SPT_1 $ and~$ \angle SPT_2 $, and in this case $ \angle SPT_2 + 90^\circ = \angle SPT_1 $, making $ \angle SPT_2 < \angle SPT_1 $, and so the length of~$ ST_2 $ is always less than the length of~$ ST_1 $.  This argument is generic: a pseudopath can always be shortened by a corner move, with the result either a shorter \LS-path or a shorter pseudopath.  But if the result is a shorter pseudopath, then another corner move will shorten that.  We end up with a sequence of corner moves and successively shorter paths that continues as long as a pseudopath is the result.  This cannot go on for ever; if we eventually produce a 3-face path it will have to be an \LS-path.  We record these results in the following proposition.

\begin{prop} \label{p:decang}
	If the decision angle for an \LS-path or pseudopath exceeds~$135^\circ$, the path can be replaced by a shorter path (\LS\ or pseudo) by applying a corner move with the decision angle vertex as its pivot point.  If we begin with a pseudopath or if a corner move results in a pseudopath, then there is a sequence of corner moves that must eventually end with a shorter \LS-path.
\end{prop}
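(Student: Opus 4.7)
The plan is to reduce both claims to the extended SAS (hinge) theorem previewed in the discussion of Figures~\ref{f:cmg} and~\ref{f:cmgeom}. For a path of length $d_1=|ST_1|$ with decision angle $\angle SPT_1>135^\circ$, I would perform the corner move that rotates the appropriate square $90^\circ$ about $P$, carrying $T_1$ to a new target image $T_2$ and yielding the new path of length $d_2=|ST_2|$. Because rotation about $P$ is an isometry, $|PT_1|=|PT_2|$, so the triangles with vertices $S,P,T_1$ and $S,P,T_2$ share the side $SP$ and have their second pair of sides equal. The hinge theorem then gives $d_1>d_2$ if and only if $\angle SPT_1>\angle SPT_2$, and it remains to relate the two angles at $P$. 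A short case analysis, determined by whether the straight line of the path crosses the edge being cut at $P$, produces two configurations: in the \LS-path case the rotation carries $T_1$ across the ray $\overrightarrow{PS}$, the unsigned angles at $P$ obey $\angle SPT_1+\angle SPT_2=270^\circ$, and $\angle SPT_1>135^\circ$ becomes equivalent to $d_2<d_1$; in the pseudopath case $T_1$ and $T_2$ remain on the same side of $\overrightarrow{PS}$, so $\angle SPT_2=\angle SPT_1-90^\circ$ and $d_2<d_1$ holds unconditionally.

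For the second claim, the pseudopath computation above shows that \emph{every} pseudopath admits a corner move producing a strictly shorter path, with no hypothesis on the decision angle needed. Starting from a pseudopath, I would apply such a corner move; if the result is again a pseudopath, repeat. This yields a sequence of paths whose lengths strictly decrease. Each such path lives in one of the finitely many partial unfoldings of the cube (twelve of them, as recorded in Figure~\ref{f:unfgrid}), and strict length-decrease forbids revisiting a given unfolding, so the sequence must terminate in at most twelve steps. Termination cannot occur at a pseudopath (which would again admit a shortening), so the terminal path is a strictly shorter \LS-path.

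The main obstacle is the geometric bookkeeping of the angle relation under the $90^\circ$ rotation, specifically the case distinction between $\angle SPT_1+\angle SPT_2=270^\circ$ and $\angle SPT_1-\angle SPT_2=90^\circ$. Which case obtains depends on whether the rotation sweeps $T_1$ across the ray $\overrightarrow{PS}$, and this in turn reflects whether the original path is an \LS-path or a pseudopath at $P$. One also has to check that the "decision angle exceeds $135^\circ$" branch lands in the angular range where the sum formula applies (both $\angle SPT_1$ and $\angle SPT_2$ are genuinely the smaller angles at $P$). Once that geometric case analysis is pinned down from Figures~\ref{f:cmg} and~\ref{f:cmgeom}, the hinge theorem delivers the length comparison and the termination claim is a short finiteness argument.
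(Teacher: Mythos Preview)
Your proposal is correct and follows essentially the same approach as the paper: the hinge theorem applied to the two triangles $SPT_1$ and $SPT_2$, with the two angle relations $\angle SPT_1+\angle SPT_2=270^\circ$ (the \LS\ configuration of Figure~\ref{f:cmg}) and $\angle SPT_1-\angle SPT_2=90^\circ$ (the pseudopath configuration of Figure~\ref{f:cmgeom}\,(b)) driving the length comparison.

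The one point worth flagging is your termination argument. The paper argues that the shortening sequence cannot go on forever because a 3-face path is automatically an \LS-path, so the process halts by the time the face count drops to three. You instead invoke finiteness of the set of partial unfoldings together with strict length decrease. That reasoning is valid, but your count of ``twelve'' is premature: Figure~\ref{f:unfgrid} records only the 3- and 4-face roll sequences, and at the point where this Proposition is stated the 5- and 6-face cases have not yet been excluded (indeed, the Proposition is what is used to exclude them). The finiteness you need still holds---there are only finitely many partial unfoldings of the cube containing both the source face and an image of the opposite face---but the bound is larger than twelve. With that correction your termination argument goes through, and is arguably cleaner than the paper's somewhat informal appeal to the face count reaching three.
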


\begin{exer}
	The SAS argument used for the configuration depicted in Figure~\ref{f:cmgeom}\,(b) doesn't work if $  \angle SPT_1 $ is~$ 180^\circ $, because one of the two triangles for SAS no longer exists.  But the conclusion that the corner move will produce a shorter path is still true.  The general situation is depicted in Figure~\ref{f:5fp}\,(a);  use this to replace the SAS argument.
\end{exer}

\section{Three applications of corner move geometry}

\subsection{Shortest paths have to be straight lines contained in the interior of some unfolding.}
This is often taken for granted, for example by Dudeney and Mellinger-Viglione. We provide an argument based on corner moves in Appendix~A.

\subsection{Non-existence of 5- and 6-face shortest paths.}  
Here is the argument for 5-face \LS-paths.  Up to a symmetry-equivalent diagram, we have the third configuration in Figure~\ref{f:fapaty}, reproduced in Figure~\ref{f:5fp}\,(b) with some extra details.  A 5-face \LS-path must have its  endpoints in the shaded regions. The dotted line from a possible source~$ S $ to the pivot vertex~$ P $ to a possible target~$ T_1 $ defines the decision angle for~$ ST_1 $.   All such decision angles are greater than~$ 135^\circ $, as is made evident by the black-filled~$ 135^\circ $ angle, so a corner move can be applied to obtain a shorter 4-face \LS-path or pseudopath.  If we get an \LS-path, we are done.  If we get a pseudopath, then a second corner move will produce a 3-face \LS-path and we are done in this case too.
\begin{figure}[h]
	\centering{
		\begin{overpic}[scale=0.3,trim=0cm -1cm 0cm 0cm,clip]{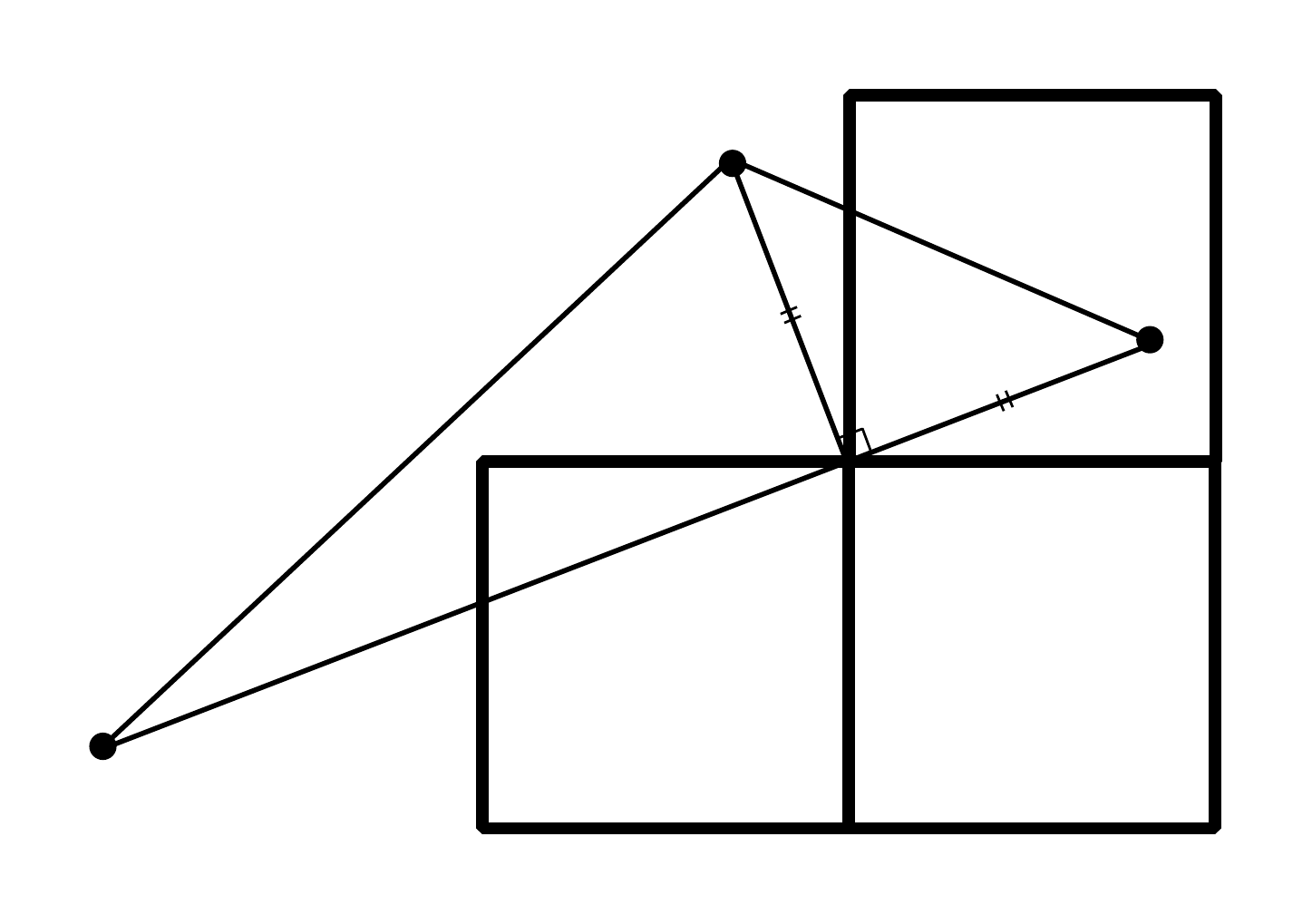} \put(4,13){\footnotesize $ S $}
		\put(66,36){\footnotesize $ P $}
		\put(85,44){\footnotesize $ T_1 $}
		\put(54,67){\footnotesize $ T_2 $}
		\put(56,6){\footnotesize (a)}
		\end{overpic}
	\qquad
		\begin{overpic}[scale=0.7]{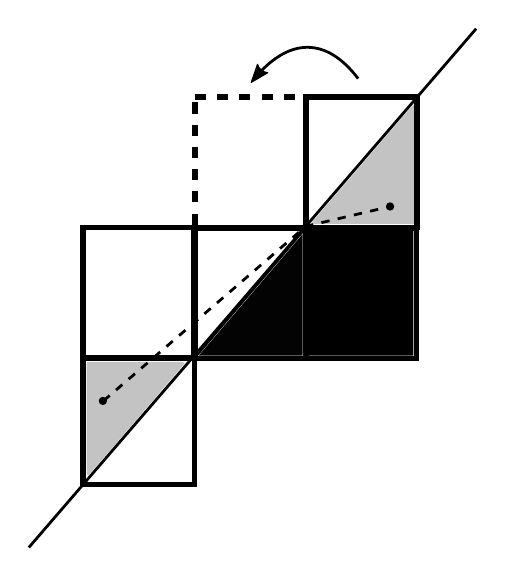} 
			\put(45,64){\footnotesize $P$}
			\put(36,29){\footnotesize $135^\circ$ angle}
			\put(35,93.5){\footnotesize corner move}
			\put(-12,29){\footnotesize source $ S $}
			\put(75,63){\footnotesize target $ T_1 $}
			\put(49,9){\footnotesize (b)}
		\end{overpic}
	}
	\caption{\label{f:5fp} Applications of corner move geometry}
\end{figure}

\begin{exer}
Show that there are no 6-face shortest paths by modifying the diagram in Figure~\ref{f:5fp}\,(b). 
\end{exer}

We have arrived at a result which, by eliminating the 5-face shortest path possibility,  distinguishes the cube from the rectangular solids considered by Dudeney and Mellinger-Viglione.

\begin{thm} \label{t:3or4faces}
	A shortest path on a cube between source and target points in the interiors of opposite faces either traverses three or four faces.
\end{thm}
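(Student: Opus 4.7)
The plan is to assemble the theorem from three ingredients already developed in the paper. First, I would invoke the assertion from Section~4.1 (to be justified in Appendix~A) that any shortest path on the cube must be an \LS-path, i.e., a straight segment lying in the interior of one of the partial unfoldings. This reduces the problem to classifying \LS-paths by the number of faces traversed. Because the source and target lie in the interiors of opposite faces, which share no edge, every such path must cross at least one intermediate face, and obviously at most all six; so the face count lies in $\{3,4,5,6\}$. Up to the symmetries of the cube, Figure~\ref{f:fapaty} exhausts the possible \LS-path configurations in each of these four cases.

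Next, I would rule out the 5-face case using the reasoning already sketched in Section~4.2 (see Figure~\ref{f:5fp}(b)). In the unique 5-face \LS-path configuration, the admissible source and target regions together with the pivot vertex $P$ are positioned so that the decision angle $\angle SPT_1$ always strictly exceeds $135^\circ$. By Proposition~\ref{p:decang}, a corner move at $P$ therefore produces a strictly shorter path that is either a 4-face \LS-path or a pseudopath; in the latter case, a second corner move (again guaranteed to shorten by Proposition~\ref{p:decang}) produces an even shorter 3-face \LS-path. Either way, no 5-face \LS-path can be a shortest path.

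The 6-face case follows by the analogous argument suggested in the exercise: one inspects the unique 6-face configuration of Figure~\ref{f:fapaty}, locates a pivot vertex at which the decision angle for every admissible source--target pair exceeds $135^\circ$, and applies Proposition~\ref{p:decang}, iterating corner moves through any intermediate pseudopaths, to produce a strictly shorter \LS-path with fewer faces. The main delicate step in both the 5- and 6-face eliminations is verifying the strict inequality $\angle SPT_1 > 135^\circ$ uniformly over all admissible endpoint positions in the shaded source/target regions; this is essentially a geometric inspection showing that the shaded regions lie ``far enough past'' the pivot $P$ that the boundary $135^\circ$ value (exhibited by the filled black angle in the figure) is strictly exceeded. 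Once the 5- and 6-face possibilities are ruled out, only 3- or 4-face \LS-paths remain as candidates for shortest paths, which is exactly the conclusion of the theorem.
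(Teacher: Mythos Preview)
Your proposal is correct and follows essentially the same approach as the paper: reduce to \LS-paths via Appendix~A, enumerate the possible face counts from Figure~\ref{f:fapaty}, eliminate the 5-face case by the decision-angle argument of Section~4.2 (Figure~\ref{f:5fp}(b)) together with Proposition~\ref{p:decang}, and eliminate the 6-face case by the analogous argument the paper leaves as an exercise. The only cosmetic difference is that you spell out the lower bound on the face count directly, whereas the paper relies on the classification in Figure~\ref{f:fapaty} to list $\{3,4,5,6\}$ as the only possibilities.
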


\subsection{A centroid source or  target point always has a 3-face shortest path.}  
The arguments for source and target points are identical. For centroid targets, Figure~\ref{f:no4} represents any one of the four 3-face path roll-outs RR, UU, LL, and DD in the right-hand image in Figure~\ref{f:unfgrid}.  Consequently, regardless of the source point location, the decision angles for any 3-face \LS-path to a centroid target point are always less than~$ 135^\circ $. It follows that no corner move can shorten a 3-face \LS-path, and so the shortest path must be one of the four 3-face \LS-paths.
\begin{figure}[ht]
\centering{
	\begin{overpic}[scale=0.4]{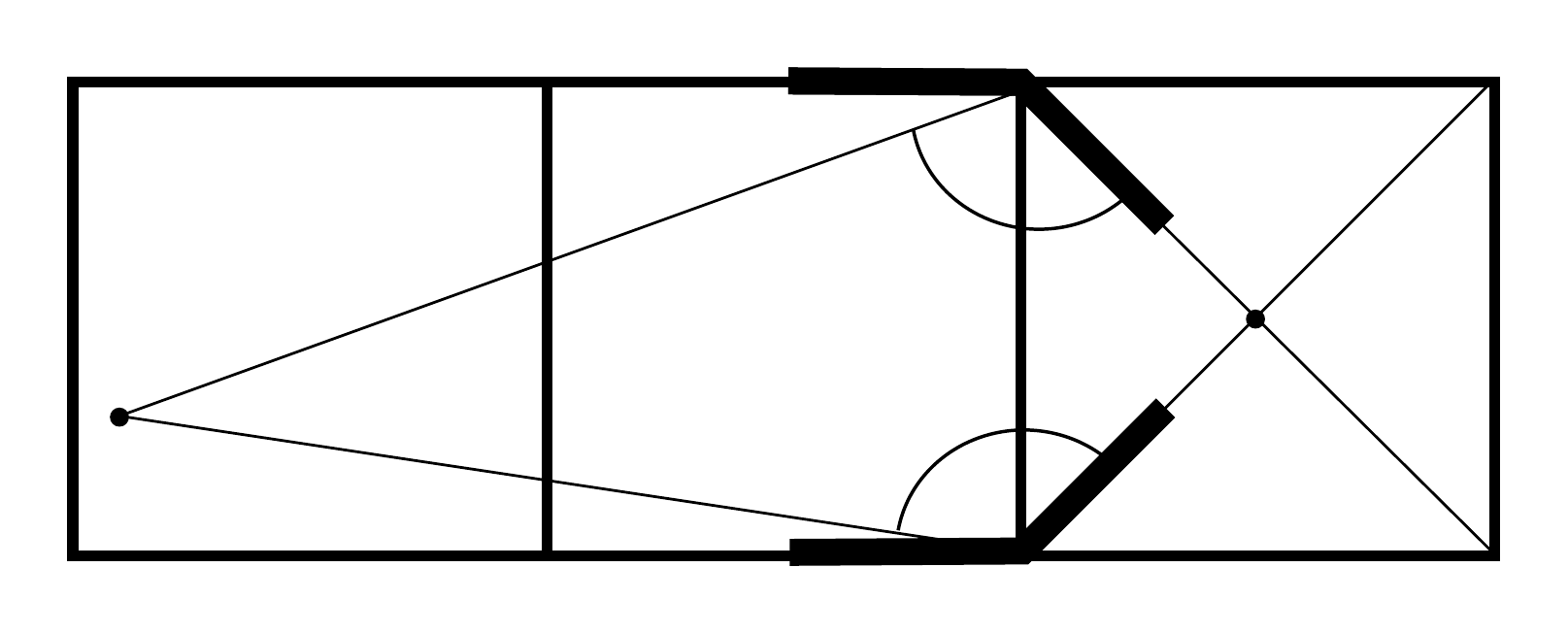} 
		\put(6,9){\footnotesize source}
		\put(83,19){\footnotesize target}
		\put(55,37){\footnotesize $ 135^\circ $ angle}
		\put(55,0){\footnotesize $ 135^\circ $ angle}
		\put(45,23.5 ){\footnotesize dec.~angle}
		\put(45,13){\footnotesize dec.~angle}
	\end{overpic}
}
\caption{\label{f:no4}No 4-face shortest path for centroid target}
\end{figure}

\section{Analytic geometry of corner moves}
In this section, we introduce coordinates and obtain formulas for the points~$ T_1 $ and~$ T_2 $ and the distances~$ d_1 $ and~$ d_2 $  in Figure~\ref{f:cmg} in terms of the positions of~$ S= (s_1,s_2) $ and the projected target point on the source face,~$ T= (t_1,t_2) $.  The context for coordinatization is the unfolding grid described earlier and partially depicted in Figure~\ref{f:ur1}.  The full grid, with sample source and target points and \LS-paths and pseudopaths, is illustrated in Figure~\ref{f:unfgrid}. In view of the proposition above, the grid only has to accomodate \LS-paths and pseudopaths traversing three or four faces. Pseudopaths are denoted by dotted lines.
\begin{figure}[ht]
	\centering{
\begin{overpic}[scale=0.15]{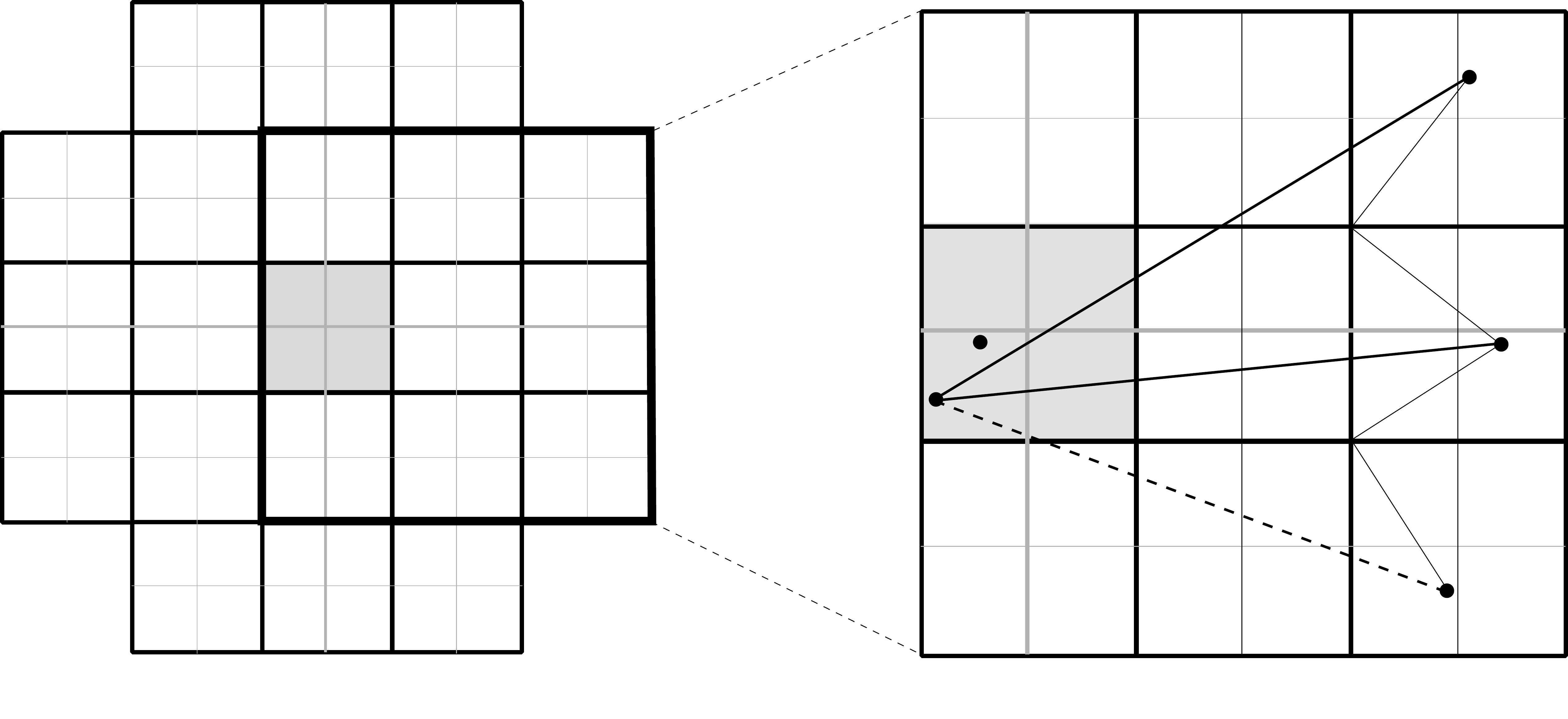} 
	\put(57,1 ){\tiny $ -1 $}
	\put(65.1,1 ){\tiny $ 0 $}
	\put(72,1 ){\tiny $ 1 $}
	\put(78.6,1 ){\tiny $ \mathbf{2}$}
	\put(85.7,1 ){\tiny $ 3 $}
	\put(92.5,1 ){\tiny $ 4 $}
	\put(99.5,1 ){\tiny $ 5 $}
	\put(53,3.5 ){\tiny $ -3 $}	
	\put(53,10.4 ){\tiny $ -2 $}
	\put(53,17 ){\tiny $ -1 $}
	\put(53,24.3 ){\tiny\hphantom{$-$}$ 0 $}
	\put(53,31 ){\tiny\hphantom{$-$}$ 1 $}	
	\put(53,37.7 ){\tiny\hphantom{$-$}$ 2 $}
	\put(53,44.3 ){\tiny\hphantom{$-$}$ 3 $}
	\put(56,20){\tiny $ S $}
	\put(61.9,25){\tiny $ T $}
	\put(95.5,20.5){\tiny $ T_1 $}
	\put(94,41.5){\tiny $ T_2 $}
	\put(93.5,6.5){\tiny $ T_3 $}
	\put(77.5,14){\tiny RDR }
	\put(76,23){\tiny RR }
	\put(75,34){\tiny RUR }
\end{overpic}
\quad
\begin{overpic}[scale=0.22]{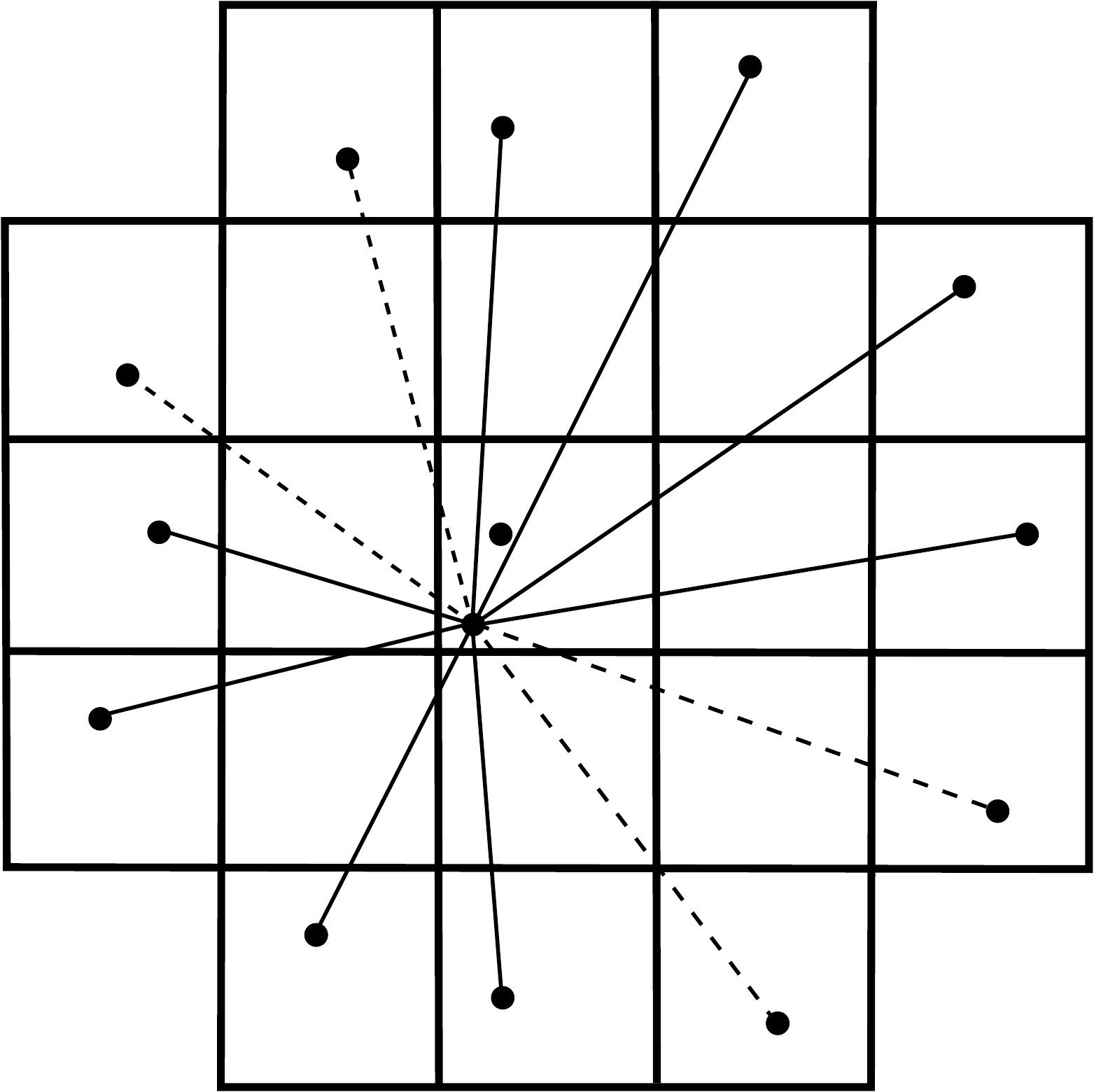} 
	\put(44,34){\tiny $S $ }
	\put(44,53){\tiny $T $ }
	\put(89,43){\tiny RR }
	\put(42,91){\tiny UU }
	\put(3,48){\tiny LL }
	\put(42,2){\tiny DD }
	\put(84,65){\tiny RUR }
	\put(84,30){\tiny RDR }
	\put(65,81){\tiny URU }
	\put(22,88){\tiny ULU }
	\put(3,68){\tiny LUL }
	\put(3,25){\tiny LDL }
	\put(22,7){\tiny DLD }
	\put(64,12){\tiny DRD }
\end{overpic}
}
\caption{\label{f:unfgrid} The unfolding grid, corner moves, and all paths (pseudopaths dotted)}
\end{figure}

Recall that in the unfolding grid, the source and target points are represented as in the ``top view'' illustrated in Figure~\ref{f:ur1} and described in the text following that figure. These points appear together in the base face of the unfolding grid, which is shaded in the left and middle diagrams of Figure~\ref{f:unfgrid}.  In the middle diagram of Figure~\ref{f:unfgrid}, the image point~$ T_1 $, obtained via the RR rolling sequence, can instead be plotted by reflecting~$ T $ in the line~$ x=2 $. The corner-move images~$ T_2 $ and~$ T_3 $ are then obtained from~$ T_1 $ by rotating around the pivot vertices~$ (3,1) $ and~$ (3,-1) $ respectively.  The resulting \LS-paths and pseudopath are labeled in the diagram by their roll sequences.

In addition to the paths, \LS\ and pseudo, shown in the middle diagram, there are analogous 3-face roll sequences to the left, up, and down, whose results can be plotted by reflection in the lines~$ x=-2 $, $ y=2 $, and~$ y=-2 $ respectively. The right-hand diagram of Figure~\ref{f:unfgrid} is an example of all twelve target positions, with the LL path obviously the shortest path.  The third column of Table~\ref{ta:LSPends}, in the rows for~RR, LL, UU, and~DD, gives the coordinates of the resulting 3-face target images for the original target point~$ T= (t_1,t_2) $. 

In order to get the 4-face path endpoints, we have to rotate the endpoints~$ (x,y) $ of the 3-face \LS-paths about the appropriate pivot points~$ (a,b) $.  The general formulas for these rotations are in the left-hand column of the following diagram.
{
\[
\xymatrix{ 
 (b+a-y,\, b-a+x) 
 &&&& 
 (b-y,\, x-a) \ar@{|->}[llll]_-{\text{translate back}}
 \\
 (x,y) \ar@{|->}[rrrr]^-{\text{translate rotation center to origin}} 
       \ar@{|->}[u]^{\text{$+90^\circ$ around $(a,b)$} } \ar@{|->}[d]_{\text{$-90^\circ$ around $(a,b)$} }
 &&&& 
 (x-a,\, y-b) \ar@{|->}[d]^{\text{rotate $ -90^\circ $}} \ar@{|->}[u]_{\text{rotate $ +90^\circ $}}
 \\
 (a-b+y,\, a+b-x) 
 &&&& 
 (y-b,\, a-x) \ar@{|->}[llll]^-{\text{translate back}}
}
\]
}
The rotation formulas are used for the entries in the final column of Table~\ref{ta:LSPends}. 
\begin{table}[ht]
\centering{	
	\begin{tabular}{ccccc} \toprule
		 Roll Seq. & Refl. Line & Coords. $ (x,y) $ & Piv. Pt. $ (a,b) $ &  Rot. Pt. Coords.\\ \midrule
		 RR  &  $ x=\hphantom{-}2 $ & $ (4-t_1,\, t_2) $ \\ 
		 RUR &&& $ (3,1) $          & $ (4-t_2,\, 2-t_1) $ \\ 
		 RDR &&& $ (3,-1) $         & $ (4+t_2,\, -2+t_1) $ \\ \midrule
		 LL  &  $ x=-2 $            & $ (-4-t_1,\, t_2) $ \\ 
		 LUL &&& $ (-3,1) $         & $ (-4+t_2,\, 2+t_1) $\\ 
		 LDL &&& $ (-3,-1) $        & $ (-4-t_2,\, -2-t_1) $ \\ \midrule
		 UU  &  $ y=\hphantom{-}2 $ & $ (t_1,\, 4-t_2) $ \\
		 URU &&& $ (1,3) $          & $ (2-t_2,\, 4-t_1) $ \\ 
		 ULU &&& $ (-1,3) $         & $ (-2+t_2,\, 4+t_1) $ \\ \midrule 
	     DD  &  $ y=-2 $            & $ (t_1,\, -4-t_2) $ \\
		 DRD &&& $ (1,-3) $         & $ (2+t_2,\, -4+t_1) $ \\ 
		 DLD &&& $ (-1,-3) $        & $ (-2-t_2,\, -4-t_1) $ \\ \bottomrule\\
	\end{tabular}
}
\caption{\label{ta:LSPends} Coordinates for the twelve unfolded positions of $ T=(t_1,t_2) $}
\end{table}
From the results in the table and the distance formula, we get, in Table~\ref{ta:LSPlen}, the squares of the lengths of the twelve candidates for shortest path.  Whichever of these candidates is shortest gives us the shortest  \LS-path or paths. (We have sometimes replaced a squared algebraic expression with its negative in the service of neater formatting.)  

In Table~\ref{ta:LSPlen},  we begin with a fixed source point~$ (s_1,s_2) $ in the base face of the unfolding grid and view the target point~$ (x,y)= (t_1,t_2) $ in the base face as varying around the interior of the superimposed target face.   The entries in Table~\ref{ta:LSPlen} solve the shortest path problem in principle, but in reality only up to computational precision in distinguishing between possibly very close lengths.  

\begin{table}[ht]
\centering{
		\begin{tabular}{cl|cl} \toprule
			Roll Seq. & $ \text{Distance}^2 $ &  Roll Seq. & $ \text{Distance}^2 $ \\ \midrule
			RR  & $ (x+s_1-4)^2 + (y-s_2)^2 $ & LL    & $ (x+s_1+4)^2 + (y-s_2)^2 $   \\ 
			RUR  & $ (y+s_1-4)^2 + (x+s_2-2)^2 $  & LUL  & $ (y-s_1-4)^2 + (x-s_2+2)^2 $  \\  
			RDR  & $ (y-s_1+4)^2 + (x-s_2-2)^2 $  & LDL  & $ (y+s_1+4)^2 + (x+s_2+2)^2 $ \\ \midrule
			UU   & $ (x-s_1)^2 + (y+s_2-4)^2 $    & DD   & $ (x-s_1)^2 + (y+s_2+4)^2 $  \\
			URU  & $ (y+s_1-2)^2 + (x+s_2-4)^2 $  & DRD  & $ (y-s_1+2)^2 + (x -s_2-4)^2 $ \\ 
			ULU  & $ (y-s_1-2)^2 + (x-s_2+4)^2 $ & DLD  & $ (y+s_1+2)^2 + ( x+s_2+4)^2 $ \\ \bottomrule\\ 	
		\end{tabular}
	}
	\caption{\label{ta:LSPlen} Lengths squared of the 12 \LS- or pseudopaths with fixed source~$ (s_1,s_2) $ and variable target~$ (x,y) $}
\end{table}

\section{Endpoint distribution of 4-face shortest paths}
Like Mellinger-Viglione, our main interest is not to solve any one particular shortest path problem, but rather to understand the distribution of ``exceptional'' shortest paths.  More precisely, \emph{if we fix a source point, where are all target points, if any, with the property that the shortest path from source to target traverses four cube faces rather than three?} 

 The term ``4-face shortest path'' occurs frequently in what follows, so we abbreviate it to ``4FSP.''  Note that, by virtue of previous observations about the effect of corner moves, a 4FSP cannot be a pseudopath. If a particular path is shortest, then its endpoint belongs to the solution set of eleven inequalities among the twelve entries of Table~\ref{ta:LSPlen}. Since we want to plot the \emph{locations} of 4FSP target points rather than determine the 4FSP lengths, we can do with fewer inequalities.  If a given 4-face path~$ \mathcal{P} $, viewed on the cube surface, is shorter than all four 3-face paths to the endpoint of~$ \mathcal{P} $, then  either~$ \mathcal{P} $ is a 4FSP to its own endpoint, or some other 4-face path to the endpoint of~$ \mathcal{P} $ is even shorter. In either case, we have determined that the  endpoint of~$ \mathcal{P} $ is the endpoint of a 4FSP.  Thus, for each 4-face path in the unfolding grid, we only have to consider the four inequalities comparing it to the 3-face \LS-paths.

The inequalities involved are simpler than a cursory inspection of Table~\ref{ta:LSPlen} might suggest. Note that, when expanded, any single entry in the table contains the terms~$ x^2 $ and~$ y^2 $, with all other terms linear in~$ x $ and~$ y $ once values for~$ s_1 $ and~$ s_2 $ have been given.  The result is that whenever two entries are on opposite sides of an inequality, the~$ x^2 $ and~$ y^2 $ terms cancel. This leaves a linear inequality, defined by a straight line in the plane and consisting of all points in the plane on one side of that line.  This solution set, called a \emph{half-plane,} has to be intersected with the interior of the base face in the unfolding grid.  

So, pick a source point and one of the eight three-term roll sequences that correspond to 4-face paths  from the source point. Consider the set of all \LS\ or pseudo paths with the given source and roll sequence (the target point is the variable here). Among these paths may be some 4FSP's.  The inequality discussion above indicates that the set of endpoints of these paths is the intersection of four half-planes and the interior of the base face.  If this intersection is not empty, then it is necessarily the interior of a convex polygon. 

The leftmost diagram in Figure~\ref{f:4fp} shows how the intersectiom of solution sets of the four inequalities determines a region of 4FSP endpoints, in this case for the roll sequence~DRD.  The dotted lines identify the half-plane boundaries of the solution sets, and the arrows designate on which side of each line the solution set resides.  The intersection determining the region of 4FSP endpoints is outlined with a thicker line to make it easier to see.  The target point~$ T $ is located in the 4FSP endpoint region, and the corresponding 4FSP from~$ S $ to~$ T $ is labeled ``shortest'' in the figure.  

The six diagrams to the right in Figure~\ref{f:4fp} are numerical plots showing the 4FSP endpoint regions for various source point positions. The small lower leftmost diagram  illustrates the 4FSP endpoint regions for a source point in approximately the same position as in the main left diagram, with the region actually constructed in the main diagram outlined in the numerical plot.
 
\begin{figure}[thp]
	\centering
	\begin{minipage}{0.35\textwidth}
		\begin{overpic}[scale=0.11]{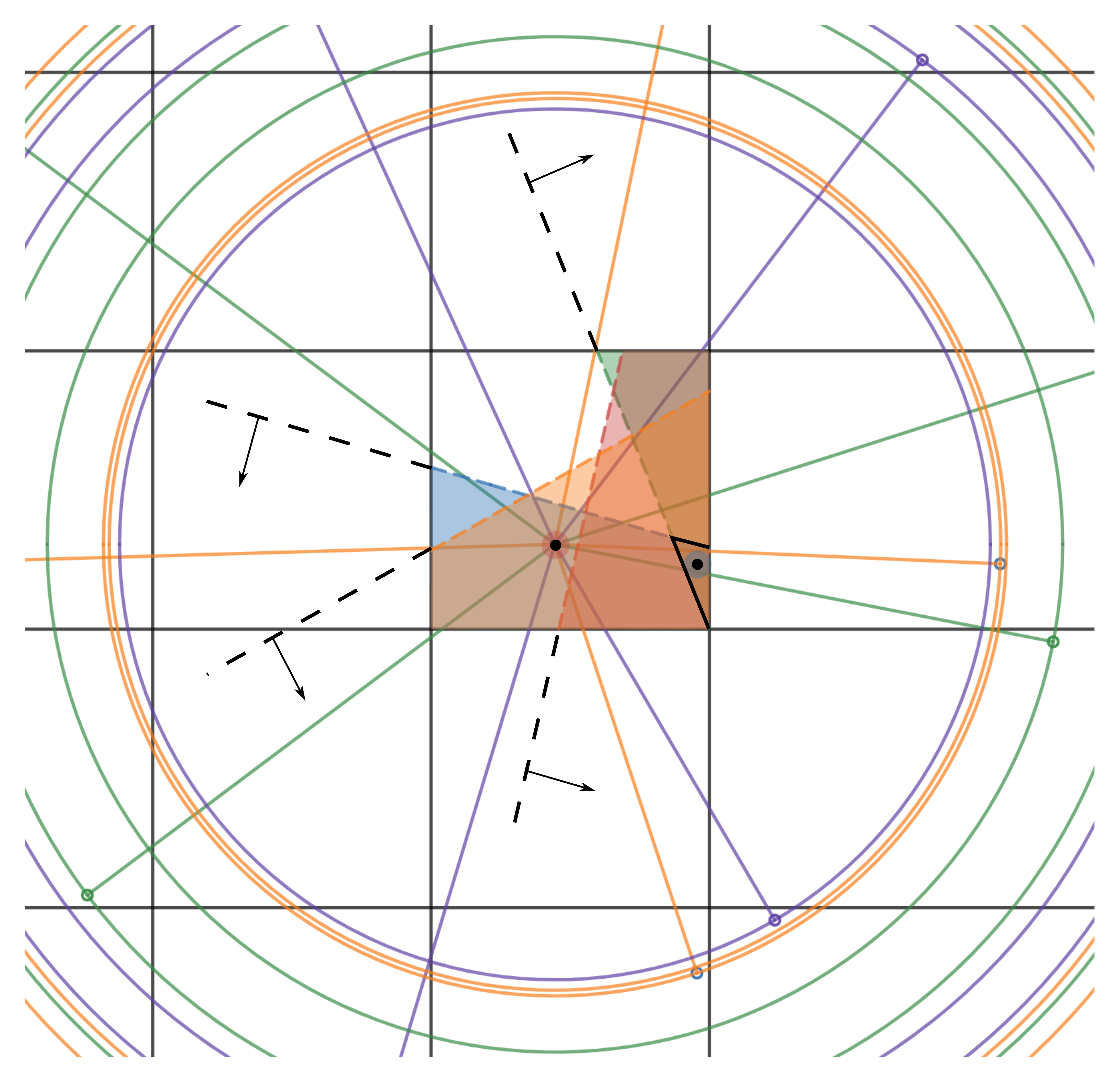} 
			\put(46,43){\tiny $ S $}
			\put(64,43){\tiny $ T $}
			\put(50,75){\tiny $ DRD  <  DD $}
			\put(25,60){\tiny $ DRD  <  RR $}
			\put(9,43){\tiny $ DRD  <  UU $}
			\put(42,19){\tiny $ DRD  <  LL $}
			\put(62,28){\tiny Shortest}
			\put(95,24){$ \longrightarrow $}
		\end{overpic}
	\end{minipage}%
	\begin{minipage}{0.6\textwidth}
		\centering	
		\includegraphics[height=0.9in]{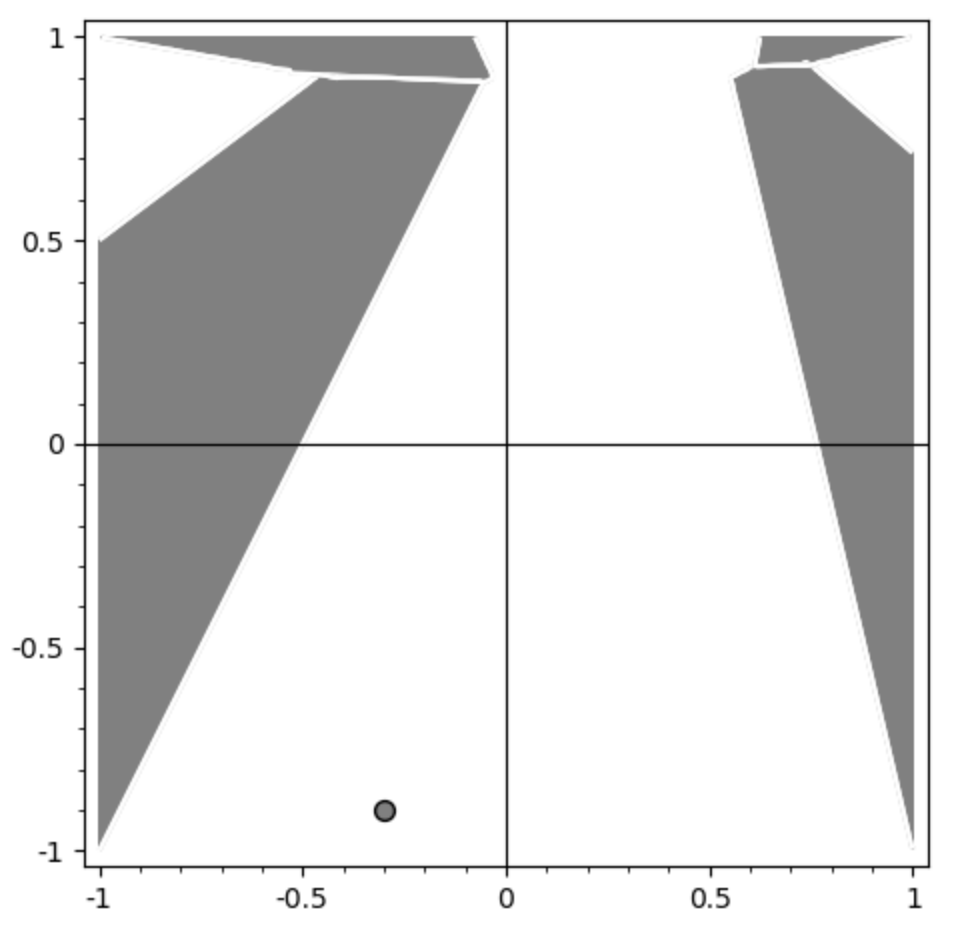} 
		\includegraphics[height=0.9in]{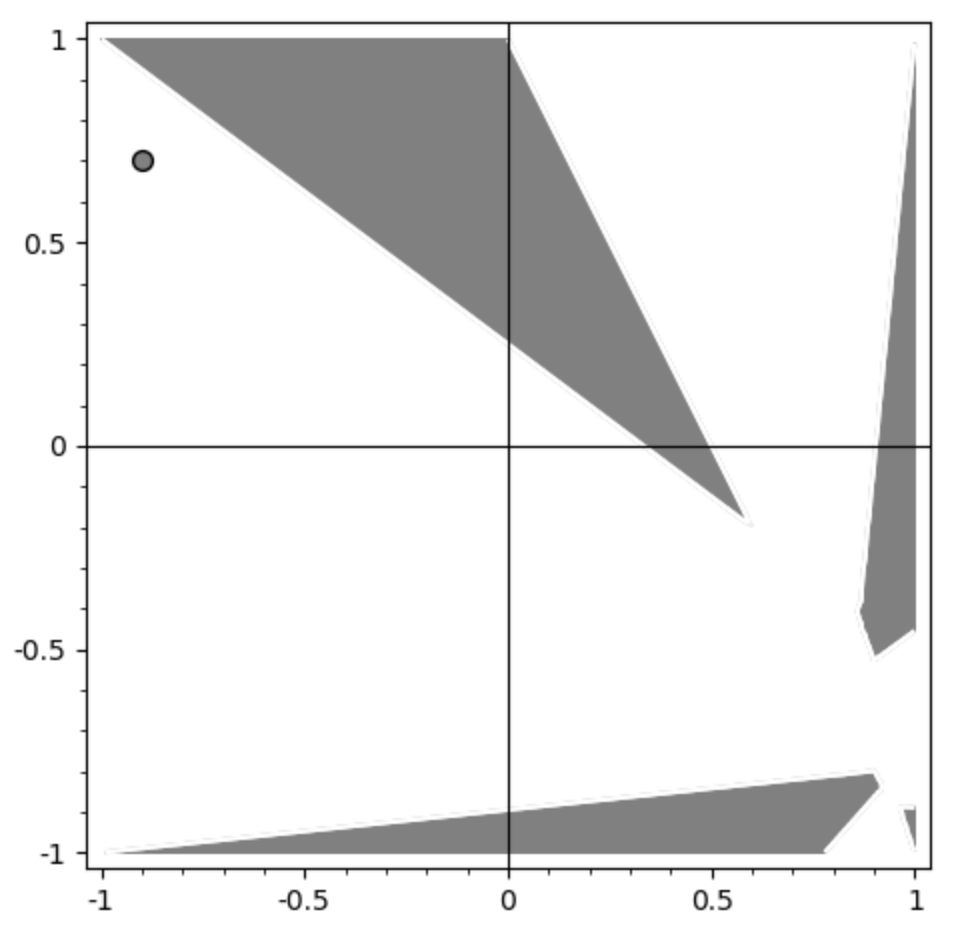} 
		\includegraphics[height=0.9in]{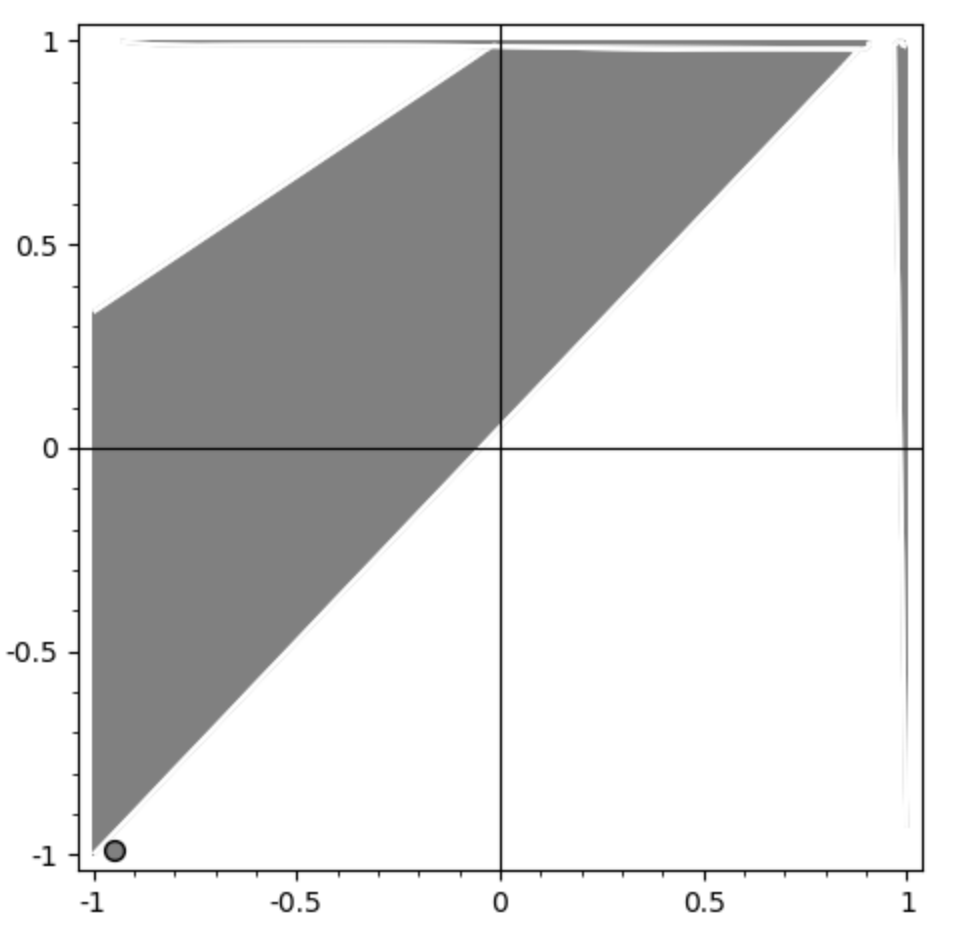} 
		\includegraphics[height=0.9in]{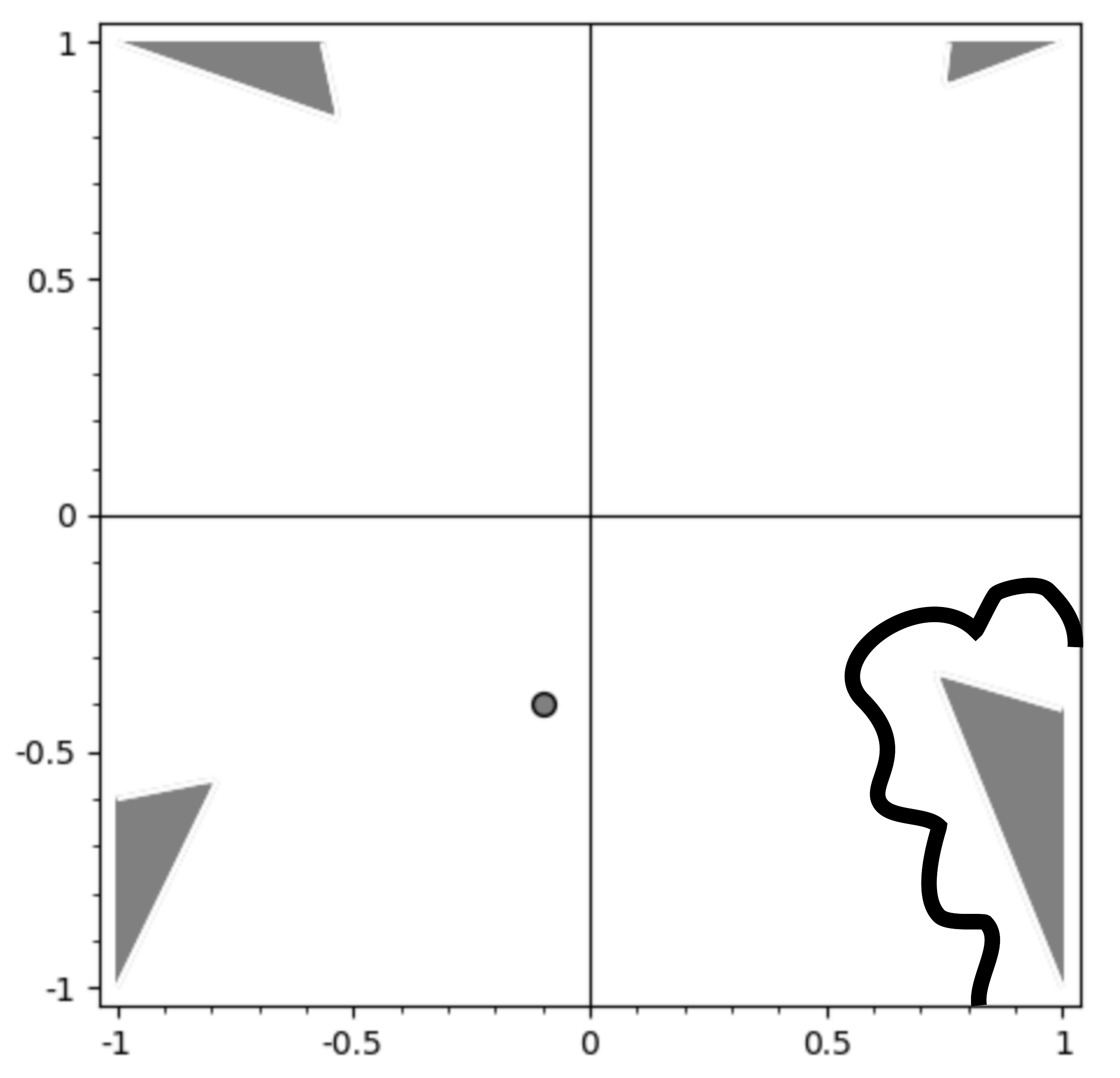} 
		\includegraphics[height=0.9in]{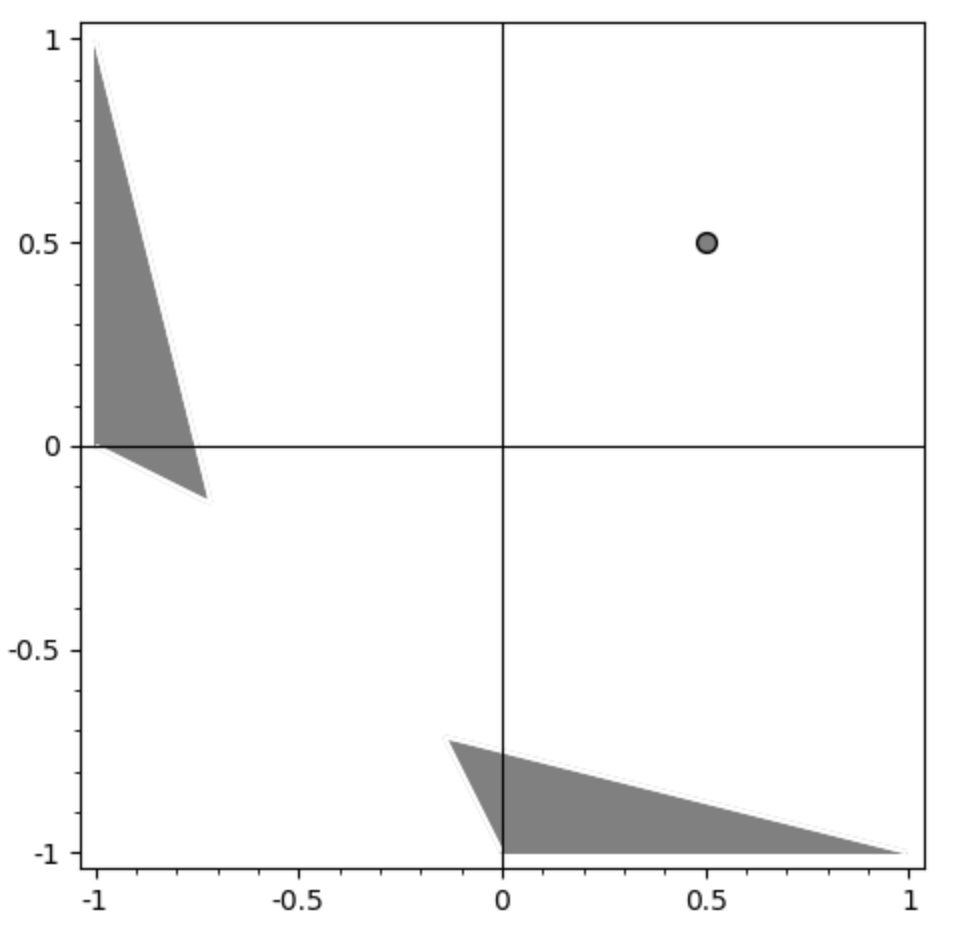} 
		\includegraphics[height=0.9in]{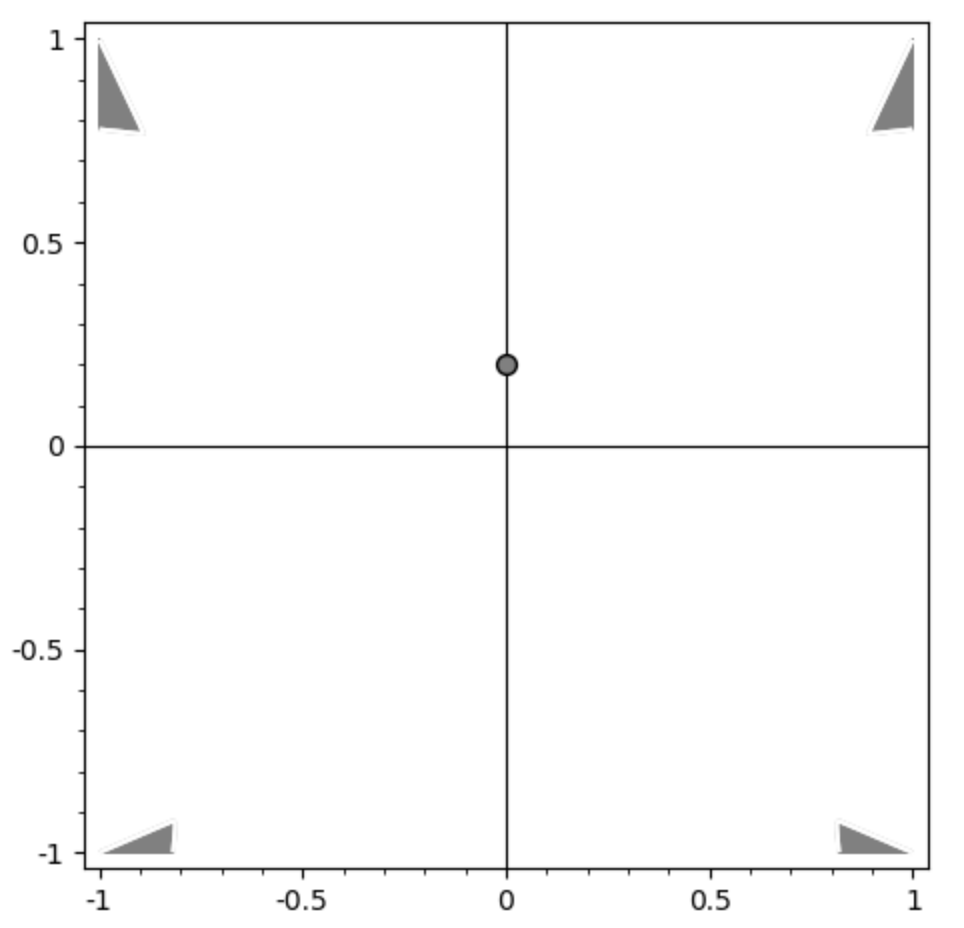} 
	\end{minipage}
	\caption{\label{f:4fp} 4FSP endpoint regions}
\end{figure}

A region point could be plotted more than once, since there could be distinct equal-length shortest paths to a target point, with each path coming from a different roll sequence. The result is that some of the polygonal regions determined by this process might intersect, which is fine since it is the union of the regions we are interested in observing. The top left of the six  smaller diagrams in Figure~\ref{f:4fp} illustrates this possibility.

Two features of these numerical plots are notable. First, there appear to be at most four 4FSP endpoint regions, one at each corner. The eight roll sequences used to test for 4FSP target points could, in principle, produce as many as eight regions,  with as many as two at each corner. Second and just alluded to, the endpoint regions all seem to be ``anchored'' at a corner---no endpoint regions floating in mid-face, no endpoint regions adjacent to an edge but not to a vertex.  It turns out that both observations are correct.  Proofs are given in Appendix~B. 

Unions of convex polygons have well-defined areas, and this makes it possible to assign, to each possible source point, the probability that there will be a target point whose shortest path traverses four faces.  The probability is the area of the union of the  polygonal solution sets for each 4FSP type divided by the area of  the base face. In this way, we get a probability distribution on the source face of the cube, and our final result in this paper are plots of this distribution in Figure~\ref{f:4faceProp}.
\begin{figure}[h!]
	\centering
	\begin{overpic}[height=2.5in]{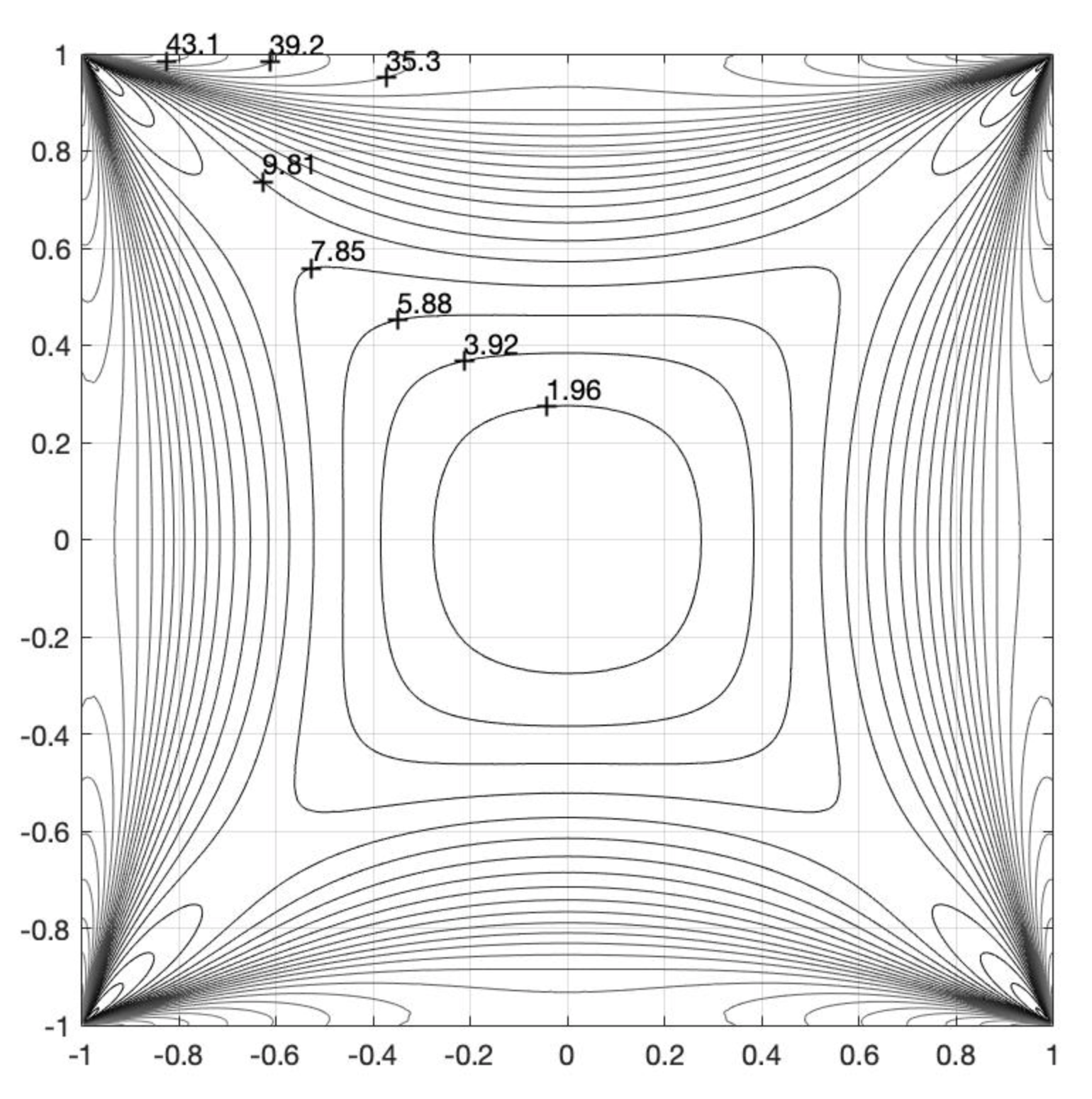}
		\put(49.7,49.5){\tiny $ + $}
		\put(52.2,52){\tinyzero}
	\end{overpic}
\qquad
    \begin{overpic}[height=2.5in]{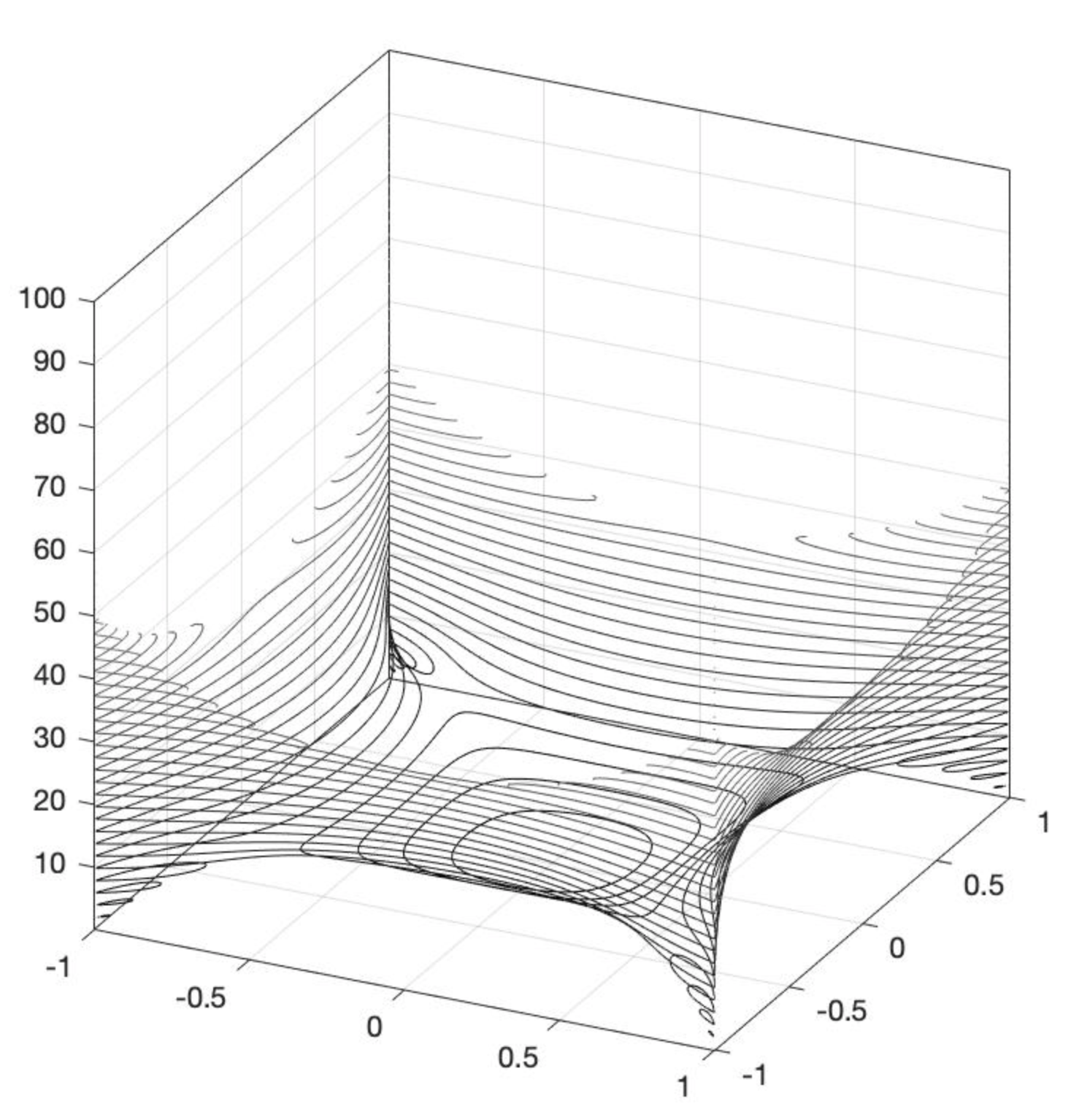}
    \end{overpic}
	\caption{Four face shortest path probability distribution (probabilites in percents) \label{f:4faceProp}}
\end{figure}

The polygonal area argument assures us that the probabilities are well-defined, but actually computing the area of a union of polygons for each each source point seems very complex.  Instead, we settle for estimates.  Our program searches a fine grid of points on the target face, identifies, via inequalities from Table~\ref{ta:LSPlen}, which points have 4FSP's, counts up the number of such points, and divides by the size of the full grid to get a probability estimate for each source point.  Further details of the program are found in Appendix~C.

The probability distribution shows that the source points most likely to have 4FSP's are located near the face vertices, in between face diagonals (where the probability is very low) and the face edges.  The face diagonals are low-probability areas because endpoint regions only occur at the non-diagonal vertices.  (For example, see the bottom middle diagram in Figure~\ref{f:4fp}.)  The probability is zero at the origin and near zero close to the origin.  The apparent singularities at the corners and the saddle points on the diagonals and on the edges are features whose explanation remains to be uncovered.

\appendix

\section{Appendix: Shortest Paths and Pseudopaths}

\subsection{Shortest paths have to be straight lines in the interior of some unfolding.}
 A path on the cube surface crosses a collection of cube edges, and these edges determine  a roll sequence that produces a partial unfolding in the unfolding grid. If the line segment~$ ST_1 $ joining~$ S $ to the unfolded image~$  T_1 $ of~$ T $ is a pseudopath for the roll sequence, then~$ ST_1 $ can be shortened to an \LS-path by a sequence of corner moves, which correspond to unfolding changes.  So there have to be unfoldings with straight line paths, contained in the interior of the unfolding, from~$ S $ to the unfolded image of~$ T $ (still called~$ T_1 $).  It is typically taken for granted that the shortest of these determines the shortest path on the cube surface from source to target.

To justify this supposition, suppose we have a shortest path on the cube surface from~$ S $ to~$ T $. It obviously has to be a straight line on any one of the cube faces, so, in the unfolding determined by the path, we get a piecewise linear (PL) path from~$ S $ to~$ T_1 $. (If the surface path passes through a vertex, there will be more than one possible roll sequence and partial unfolding.) 

 The straight line in the unfolding from~$ S $ to~$ T_1 $ is shorter than the PL path (unless the PL path turns out to be~$ ST_1 $).  Moreover, we claim that~$ ST_1 $ must be contained in the interior of the unfolding, i.e. must be an \LS-path.  If this is not so, i.e.~$ ST_1 $ is a pseudopath,  then a sequence of corner moves will produce an even shorter straight line, in the interior of its unfolding, from~$ S $ to an unfolded image~$ T_2 $ of~$ T $. But the path~$ ST_2 $ is an \LS-path shorter than~$ ST_1 $ which is shorter than the PL path, contradicting the minimality of the PL path's length. 

We now have the straight line in the unfolding from~$ S $ to~$ T_1 $  contained in the interior of the unfolding and so an \LS-path that would be shorter than the PL path, unless the PL path \emph{is} the straight line~$ ST_1 $, and this establishes the claim.

\subsection{Pseudopaths in the unfolding grid.}
Inequivalent roll sequences in the unfolding grid provide another interpretation of pseudopaths.   If we draw a straight line from a source point to a target point image in the unfolding grid, the squares the line passes through define a roll sequence. There are several roll sequences that land in the same target point square, corresponding to possible lines from source point to target point image.  For example, the sequences RUR, RRU, and URR all end at the same target square.  However, RUR is the only one of these sequences that corresponds to a path on the cube surface from the source face to the target face---which is \emph{opposite} the source face.  The other two roll sequences end on a face \emph{adjacent} to the source face, not opposite it.  The lines from source to target point image that produce these paths to adjacent faces do not lie in the interior of the RUR sequence of squares, and so have already been classified as pseudopaths. The point here is that these pseudopaths can also be interpreted as paths on the cube surface that fail to arrive at the correct target face.   

We could have chosen to disregard pseudopaths on the unfolding grid.  However, when computing path lengths, this would require that the program determine whether or not a path lies in the interior of an appropriate roll sequence.  Given that we are looking for shortest paths, and given that pseudopaths always determine a shorter  \LS-path via one or more corner moves, it is easier to simply compute all path lengths without trying to detect whether a path is pseudo or \LS, knowing that a pseudopath can never produce a ``false positive'' shortest result.

\section{Appendix: Qualitative Analysis of 4-face Regions}
Here is an overview of the situation analyzed in this appendix.  We have a source point on the source face of the cube and a target point on the target face.  The source and target faces are superimposed on the base face of the unfolding grid.  . This superposition of source and target faces leads to language that has to be properly understood.  For example, we have occasion to speak of source and/or target points lying on  a  base face diagonal.  This means that the source point is on a face diagonal of the cube's source face and the target point is on the corresponding diagonal of the cube's target face.  In the unfolding grid, these diagonals are superimposed and constitute a single base face diagonal.  

There are various paths between the source and target points on the cube, obtained from the unfolding grid by drawing straight lines from the source point to the twelve unfolded images of the target point. There are eight roll sequences  that produce 4-face \LS-paths or pseudopaths, and another four roll sequences that produce 3-face \LS-paths (see Table~\ref{ta:LSPlen}).  Our goal has been to determine, for each source point~$ S $, the regions on the cube's target face, if any, that are the endpoints of 4FSP's from~$ S $.

In the unfolding grid, these regions occur as the solution sets of systems of inequalities.  If we fix a source point~$ S =(s_1,s_2) $ and a roll sequence, then the target point coordinates~$ (x,y) $ are the only variables. (Later, we vary the source point coordinates~$ s_1 $ and~$ s_2 $ as well.)  The formulas for the length of the path associated with the roll sequence can be read off Table~\ref{ta:LSPlen}, and if that length is less than the lengths of the four 3-face \LS-paths (also in Table~\ref{ta:LSPlen}), then the target point~$ (x,y) $ is the endpoint of a 4FSP.

This means that for each source point~$ S $ and each roll sequence, there are four inequalities whose simultaneous solution set, intersected with the interior of the  unfolding grid base face, determines a region consisting of the endpoints of 4FSP's from~$ S $.

When we plot the 4FSP endpoint regions numerically, two features are notable. First, there appear to be at most four regions, one at each corner. The eight roll sequences could, in principle, produce as many as eight regions, two at each corner. Second and just alluded to, the endpoint regions are all ``anchored'' at a corner---no regions floating in mid-face, no regions adjacent to an edge but not to a vertex.   We shall see that this pair of observations is true in general.

In order to streamline the analysis, we avail ourselves of the isometries of the  cube source face, all of which extend to isometries of the entire cube. Using these isometries, we can restrict the source-point locations we have to consider.  The two median lines and the diagonals of the cube source face divide it into eight congruent right triangles, any two of which are related by either a rotation of the face, a reflection of the face, or the composition of both.  Since these transformations extend to the entire cube, they carry paths on the cube to other paths of the same length.  It follows that we can narrow our attention in the unfolding grid to source points~$ S=(s_1,s_2) $ in a single chosen triangle, getting the result for source points in any other triangle by applying an appropriate isometry to the chosen triangle. The triangle~$ \mathcal{T} $ we've chosen is the one whose vertices are~$ (0,0) $, $ (0,-1) $, and~$ (-1,-1) $ in the coordinate system used in Figure~\ref{f:unfgrid}.
 \begin{quote}
	\em All subsequent analyses assume that, in the unfolding grid base face, the source point~$ S= (s_1,s_2) $ is in triangle~$ \mathcal{T} $, including the two sides of~$ \mathcal{T} $ that are not a side of the base face, but excluding~$ (0,0) $.  This corresponds to the coordinate restrictions~$ -1 < s_2 \le s_1 \le 0 $ and $ (s_1,s_2) \ne (0,0) $.
\end{quote}
We have excluded the origin because we have already seen that there cannot be any 4FSP's with source point at the centroid of the source face.

\subsection{Why at most four regions when there are eight possiblities?}

The 4FSP candidates in the unfolding grid for a given source point and roll sequence have to be tested against the four 3-face \LS-paths from the same source point. If any one of these inequalities has a solution set that misses the interior of the  base face, then that inequality is not satisfied by the coordinates of any possible target point, in which case there is no need to check the other three inequalities.  As we shall argue next, this happens in four of the eight possibilities, and so leaves at most four non-empty regions of interest.  

Any one of the four inequalities just mentioned has a half-plane solution set.  This is for a fixed source point and roll sequence, but if we allow the source point to vary within the triangle $ \mathcal{T} $, then we find that \emph{the corresponding solution-set bounding lines all go through a single common point.} For example, consider paths corresponding to the roll sequence~ULU. One of the four inequalities a shortest path endpoint~$ (x,y) $ must satisfy requires the~ULU path to be shorter than the corresponding~LL path.  Consulting Table~\ref{ta:LSPlen}, we find the straight line~$ \mathcal{L}_{ (s_1,s_2)}(x,y) $ bounding the half-plane solution set for ``ULU path shorter than~LL path'' has equation 
\[ 
(y-s_1-2)^2 + (x-s_2+4)^2 = (x+s_1+4)^2 + (y-s_2)^2.
\]
Note that a potential target point~$ (x,y) $ will lie on~$ \mathcal{L}_{ (s_1,s_2)}(x,y) $ if we have
\[
y-s_1-2 = -(x+s_1+4) \quad\text{and}\quad x-s_2+4 = y-s_2.
\]
The appended minus sign in the first equality is legitimate, since the quantity is squared in the formula.  The sign has been chosen to make the occurrences of~$ s_1 $ cancel.  

The first equality simplifies to $ x+y=-2 $ and the second equality simplifies to $ x-y=-4 $. The simultaneous solution is $ (x,y)=(-3,1) $, a point common to all of the half-plane bounding lines, regardless of source point location, for ``ULU path shorter than~LL path.''

The fact that the lines~$ \mathcal{L}_{ (s_1,s_2)}(x,y) $ all pass through~$ (-3,1) $  allows us to see that the solution sets these lines bound have no points inside the  base face.  To establish this, we need the slopes of the bounding lines.  There are two ways to get the slope of~$ \mathcal{L}_{ (s_1,s_2)}(x,y) $. One way is to square everything, manipulate the result into the form $ y=mx+b $, and read off~$ m $, but a less tedious approach is to use implicit differentiation and solve for~$ dy/dx $, which gives
\[
 \frac{dy}{dx} = \frac{s_2+s_1}{s_2-s_1-2}.
\]
Recalling the restrictions~$ -1 < s_2 \le s_1 \le 0 $ and~$ (s_1,s_2) \ne (0,0) $, it is evident that for all source points in triangle~$ \mathcal{T} $, the line $ \mathcal{L}_{(s_1,s_2)}(x,y) $  has non-negative slope.  Since it also passes through~$ (-3,1) $,  it follows that~$ \mathcal{L}_{(s_1,s_2)}(x,y) $ is entirely above the target face. 

The half plane of solutions we are interested in comes from the associated inequality
\[
(y-s_1-2)^2 + (x-s_2+4)^2 < (x+s_1+4)^2 + (y-s_2)^2,
\] 
and it is easy to check that, say,~$ (x,y)= (-3,2) $ satisfies it, which means that the half-plane solution set lies above~$ \mathcal{L}_{(s_1,s_2)}(x,y) $.  Since~$ \mathcal{L}_{(s_1,s_2)}(x,y) $ is above the base face and the half-plane solution set is above~$\mathcal{L}_{(s_1,s_2)}(x,y) $, the solution set for the inequality within the base face is empty.  This means that regardless of the source-point location in the triangle~$ \mathcal{T} $, no 4-face path with roll sequence~ULU can ever be a shortest path.  

Analogous reasoning  shows that, for source points in the triangle~$ \mathcal{T} $, the roll sequences URU, RDR, and LDL can never have shortest 4-face paths, so that if there is a 4FSP, it would have to belong to one of the roll sequences RUR, DRD, DLD, and~LUL.  This explains why the plots show no more than four endpoint regions for 4FSP's, and why we never get two regions at any one vertex.

 There is a special case leading to additional exclusions.  When the source point lies on the hypotenuse of~$ \mathcal{T}$, then two of the above feasible roll sequences, RUR and DLD, are also excluded from ever having 4FSP's, as we shall see next.

The analysis for this situation is analogous to the ULU case considered above with one exception: in the ULU case, the half-plane of solutions to the corresponding inequality never intersects the interior of the base face.  In the two cases considered next, the half-plane of solutions can intersect the interior of the base face---but not when the source point is on the hypotenuse of~$ \mathcal{T}$.

Each of the roll sequences RUR and DLD has four inequalities to analyze; we illustrate what happens for~DLD. One of the four inequalities a shortest path endpoint~$ (x,y) $ must satisfy requires the~DLD path to be shorter than the~LL path.  Consulting Table~\ref{ta:LSPlen}, we find that the straight line~$ \mathcal{M}_{(s_1,s_2)}(x,y) $ bounding the half-plane solution set for~``DLD path shorter than~LL path'' has equation 
 \[
   (y+s_1+2)^2 + ( x+s_2+4)^2 = (x+s_1+4)^2 + (y-s_2)^2.
 \]
The same techniques used for the ULU case gives the common point for the lines~$ \mathcal{M}_{(s_1,s_2)}(x,y) $ to be~$ (-3,-1) $, and the slopes of the lines~$ \mathcal{M}_{(s_1,s_2)}(x,y) $ to be
\[
 \frac{dy}{dx} = \frac{s_1-s_2}{s_1+s_2+2} \qquad -1<s_2 \le s_1 \le 0, \quad (s_1,s_2) \ne (0,0).
 \]
For~$ s_1 $ and $ s_2 $ in the required range, we have $ dy/dx \ge 0 $, so the lines~$ \mathcal{M}_{(s_1,s_2)}(x,y) $ have non-negative slopes, with zero slopes if and only if~$ s_1 = s_2 $, in which case the line~$ \mathcal{M}_{(s_1,s_2)}(x,y) $ is the horizontal line through~$ (-3,-1) $ and so contains the bottom edge of the base face.

The associated inequality is
\[
(y+s_1+2)^2 + ( x+s_2+4)^2 < (x+s_1+4)^2 + (y-s_2)^2,
\]
and it is satisfied by~$ (-3,-2) $, indicating that the half-plane of solutions lies below the line~$ \mathcal{M}_{ (s_1,s_2)}(x,y) $.  In particular, for~$ \mathcal{M}_{(k,k)}(x,y) $, the half plane of solutions has the bottom edge of the base face as part of its boundary.   It follows that when the source point is on the hypotenuse of~$ \mathcal{T}$, there are no inequality solutions for ``DLD path shorter than LL path" in the interior of the target face, and so no DLD 4-face path can be shortest in this case. 
 
An analogous analysis excludes the roll sequence RUR when the source point is on the  hypotenuse of~$ \mathcal{T}$, so in that case there are only two feasibe roll sequences for 4FSP's, DRD and LUL.  This means there will be at most two regions of endpoints of 4FSP's in the base face.  In summary:

\begin{lem} \label{l:rollseq}
	Assume the source point~$ S $ is in the triangle~$ \mathcal{T} $.  If~$ S $ is not on the hypotenuse of~$ \mathcal{T}$, then the roll sequences capable of producing 4FSP's are RUR, DRD, DLD, and LUL.  If~$ S $ is on the hypotenuse of~$ \mathcal{T}$, then only DRD and LUL can produce 4FSP's.
 \end{lem}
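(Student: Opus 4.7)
The plan is to prove the lemma by carrying out six instances of the algebraic half-plane analysis already worked out for ULU in the main text. For each of the eight 4-face roll sequences, I would pair it with a specific 3-face roll sequence and show that the corresponding squared-distance inequality from Table~\ref{ta:LSPlen} has no solution in the interior of the base face, under the appropriate restriction on $S$. Four of the eight 4-face sequences (ULU, URU, RDR, LDL) must fail for every $S \in \mathcal{T}$, and a further two (RUR, DLD) must fail when $S$ lies on the hypotenuse $s_1 = s_2$.

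The recipe illustrated in the text for ULU paired with LL has a robust structure. For a 4-face path of the form $XYX$, I would pair it with the 3-face path $YY$: ULU pairs with LL (common point $(-3,1)$, which is the LUL pivot); URU pairs with RR (common point $(3,1)$, the RUR pivot); RDR pairs with DD (common point $(1,-3)$, the DRD pivot); LDL pairs with DD (common point $(-1,-3)$, the DLD pivot). In each case, algebraic manipulation selects the sign choices that make both $s_1$ and $s_2$ cancel, producing a source-independent common point. Implicit differentiation then yields a slope formula of the form (linear in $s_1,s_2$)/(linear in $s_1,s_2$) whose sign can be read off directly from $-1 < s_2 \le s_1 \le 0$. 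A single test point fixes which side of the bounding line the solution half-plane occupies; combining the slope range with the common point (always a pivot vertex well outside the base face) then shows that the half-plane is separated from the interior of the base face.

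For the hypotenuse refinement, I would apply the same recipe to RUR paired with UU (common point $(1,3)$, the URU pivot) and to DLD paired with LL (already worked out in the text, common point $(-3,-1)$, the LDL pivot). The slope formula in each of these two cases has $s_1 - s_2$ as a factor of the numerator or of the denominator, so when the source is on the hypotenuse the bounding line becomes horizontal or vertical and coincides with an edge of the base face; the corresponding half-plane then lies strictly on the outside of that edge.

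The main obstacle is bookkeeping rather than insight. For each of the six roll sequences, one must pick the correct 3-face comparison; an incorrect choice produces a bounding line that actually cuts through the base face and yields no exclusion. The pivot-of-mirror-sequence pattern $XYX \leftrightarrow YY$ selects the right pair uniformly. Given that choice, the six verifications follow a uniform script --- compute the common point by equating linear combinations with sign choices that cancel $s_1$ and $s_2$; compute the slope by implicit differentiation; test one point to orient the half-plane; and compare the result against the boundary of the base face --- and are essentially mechanical.
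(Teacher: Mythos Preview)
Your proposal is correct and follows essentially the same route as the paper: pair each excluded 4-face sequence with a single 3-face sequence, locate the source-independent common point of the bounding lines, compute the slope by implicit differentiation, orient the half-plane with a test point, and check that it misses the interior of the base face. The paper carries this out explicitly only for ULU versus LL and for DLD versus LL, declaring the remaining four cases ``analogous''; you go a bit further by naming all six pairings and articulating the uniform rule $XYX \leftrightarrow YY$ with common point equal to the $YXY$ pivot, which the paper leaves implicit but which is a genuinely helpful organizing observation.
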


\begin{cor}
	If the source point~$ S $ is on the hypotenuse of the triangle~$ \mathcal{T} $, then no target point anywhere on the base face diagonal can be the endpoint of a 4FSP.
\end{cor}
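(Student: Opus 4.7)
Under the hypothesis, we have $S = (s,s)$ with $s \in (-1,0)$, and a putative diagonal target $T = (x,x)$ with $x \in (-1,1)$. Lemma~\ref{l:rollseq} already restricts the possible 4FSP roll sequences from $S$ to DRD and LUL, so the plan is to rule these two out whenever $T$ also lies on the diagonal.

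The first move is a symmetry reduction. The map $(x,y) \mapsto (y,x)$ extends to an isometry of the cube that fixes both $S$ and $T$ and, on the level of roll sequences, swaps DRD with LUL, RR with UU, and LL with DD. Consequently, substituting $s_1 = s_2 = s$ and $y = x$ into Table~\ref{ta:LSPlen} must force DRD and LUL to have equal squared lengths, and likewise collapses the four 3-face candidates into only two distinct squared lengths, $LL^2 = DD^2$ and $RR^2 = UU^2$. It therefore suffices to show that DRD is not strictly shorter than both LL and RR; LUL is then handled automatically by the symmetry.

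The main step is the inequality comparison. Read off $DRD^2$, $LL^2$, and $RR^2$ from Table~\ref{ta:LSPlen}, specialize to the diagonal, and form the two differences $DRD^2 - LL^2$ and $DRD^2 - RR^2$. I expect neither difference to have a fixed sign on the domain; numerical spot checks confirm that each can be either positive or negative depending on $(s,x)$, so no single pairwise comparison can succeed. The obstacle is resolved by \emph{adding} the two differences: the $s^2$, $x^2$, and $sx$ pieces of the two squared-distance formulas should telescope, and the sum should simplify to the single product $8(1+s)(1-x)$. This product is strictly positive for $s \in (-1,0)$ and $x \in (-1,1)$, so at least one summand is non-negative, which is exactly the statement that at least one of LL, RR has length $\le$ that of DRD. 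Hence DRD is not strictly shorter than every 3-face path and so cannot be a 4FSP; by symmetry LUL is excluded as well, and no 4FSP endpoint lies on the base face diagonal.
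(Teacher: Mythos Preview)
Your proof is correct and takes a genuinely different route from the paper's. Both arguments invoke Lemma~\ref{l:rollseq} to reduce to the roll sequences DRD and LUL, but then diverge.

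The paper treats the two comparisons separately: it simplifies $|\text{RR}|^2 < |\text{DRD}|^2$ to $(s-1)(x-3) < 4$ and $|\text{LL}|^2 < |\text{LUL}|^2$ to $(s+3)(x+1) < 4$, and then relies on a graphical argument (a figure superimposing the two solution regions) to conclude that at least one inequality holds throughout the rectangle $-1 < s < 0$, $-1 < x < 1$.

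Your approach is cleaner. You first use the reflection $(x,y) \mapsto (y,x)$ to identify $|\text{LUL}| = |\text{DRD}|$ (and $|\text{RR}| = |\text{UU}|$, $|\text{LL}| = |\text{DD}|$) on the diagonal, collapsing the problem to a single roll sequence. Then, rather than analyzing the two comparisons separately, you add them:
\[
\bigl(|\text{DRD}|^2 - |\text{RR}|^2\bigr) + \bigl(|\text{DRD}|^2 - |\text{LL}|^2\bigr) = 8(1+s)(1-x),
\]
which is manifestly positive on the open domain. This is a purely algebraic argument requiring no picture, and it makes the conclusion immediate. It also explains \emph{why} the two hyperbolic regions in the paper's figure must cover the rectangle.

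One small tightening: since the sum is \emph{strictly} positive, at least one summand is strictly positive, so in fact one of RR, LL is strictly shorter than DRD, not merely $\le$. This matters, because in a tie DRD would still be a shortest path and hence a 4FSP. Your computation already delivers the strict inequality; you just undersold it in the write-up.
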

\begin{proof}
	In view of the lemma, we have to show that neither the DRD nor the LUL roll sequence can produce a 4FSP.  In fact, one of the 3-face \LS-paths obtained from RR or LL will be shorter.  We abuse our notation a bit and use $ |\text{DRD}| $, $ |\text{LUL}| $, $ |\text{RR}| $, and $ |\text{LL}| $ to denote the lengths of the paths obtained from the specified roll sequences.  With this notation, our task is to show that at least one of the two cases below always holds.
	
	Case 1: $ |\text{RR}|<|\text{DRD}| $. Consulting Table~\ref{ta:LSPlen} and setting $ x=y $ and $ s= s_1= s_2 $, we see that the path-length inequality to be established is 
	\[
	(x+s-4)^2 + (x-s)^2 < (x-s+2)^2 + (x-s-4)^2 ,
	\] 
	which after simplification becomes $ (s-1)(x-3)< 4 $. 
	
	Case 2: $ |\text{LL}|<|\text{LUL}| $. the path-length inequality to be established is
	\[
	(x+s+4)^2 + (x-s)^2 < (x-s+2)^2 + (x-s-4)^2,
	\]
	which after simplification becomes $ (s+3)(x+1)< 4 $.
	
	These inequalities are also subject to the restrictions~$ -1 < s < 0 $, that keeps~$ S $ on the hypotenuse of triangle~$ \mathcal{T} $, and~$ -1 < x < 1 $, that keeps~$ T $  in the interior of the base face.  The graphs in Figure~\ref{fig:stineq} show each inequality separately, the two inequalities simultaneously, and an enlarged portion of the simultaneous inequality plot for the restrictions just mentioned.  The enlarged view makes it clear that at least one of the two inequalities is always in force inside the restricted region, and this establishes the corollary.
    \end{proof} 

\begin{figure}
	\centering
	\begin{overpic}[width=\textwidth]{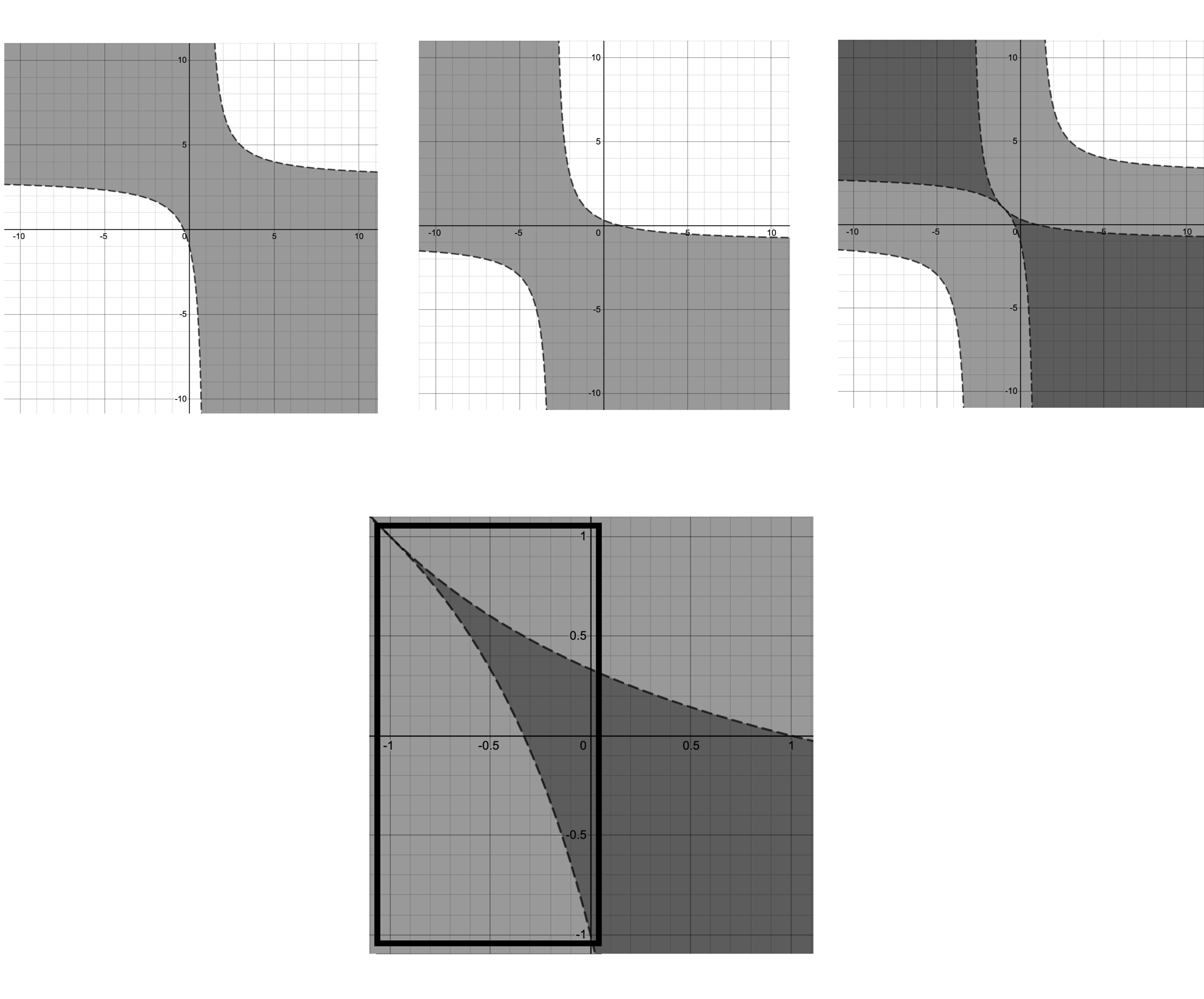} 
		\put(16,77.5){\footnotesize $  x $}
		\put(29,64.5){\footnotesize $  s $}
		\put(5,46){\footnotesize $ (x-3)(s-1)<4 $ }
		\put(6,42.5){\footnotesize for $ |\text{RR}|<|\text{DRD}| $}
		\put(50.5,77.5){\footnotesize $  x $}
		\put(63,64.5){\footnotesize $  s $}
		\put(40,46){\footnotesize $ (x+1)(s+3)< 4 $ }
		\put(41.5,42.5){\footnotesize for $ |\text{LL}|<|\text{LUL}| $}
		\put(85,77.5){\footnotesize $  x $}
		\put(98,64.5){\footnotesize $  s $}
		\put(81,46){\footnotesize Together}
		\put(50.5,38){\footnotesize $  x $}
		\put(66,22){\footnotesize $  s $}
		\put(33,0.5){\footnotesize Framed: $ -1<s<0 $,\,\, $-1<x<1  $} 
	\end{overpic}
\caption{Inequalities for source and target on a base face diagonal \label{fig:stineq}}
\end{figure}

Applying the symmetries of the square, and referring back to the cube, the lemma says that, in general, if the source point is not on a source face diagonal, there are at most four regions of 4FSP endpoints on the target face, and if the source point is on a source face diagonal, there are at most two such regions on the target face, located at the corresponding off-diagonal corners.  We shall see below that these upper bounds are always attained.

\subsection{In what sense are the 4-face regions anchored to the corners?}
 ``Corner-anchoring'' is a consequence of the following proposition. 
 \begin{prop}
 	If, in the unfolding grid, the source point is not on a base face diagonal, then each corner of the base face has an open circular-sector neighborhood, all of whose points are endpoints of 4FSP's.  If the source point is  on a base face diagonal, then only  the off-diagonal corners have such neighborhoods.
 \end{prop}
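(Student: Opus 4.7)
My plan is to treat each corner of the base face separately, pair it with the feasible 4-face roll sequence supplied by Lemma~\ref{l:rollseq}, and exhibit a sector of 4FSP endpoints at that corner. As in the preceding appendix text, I restrict $S = (s_1,s_2)$ to the triangle $\mathcal{T}$ and extend by cube symmetries. The natural corner/sequence correspondence is RUR $\leftrightarrow (1,1)$, LUL $\leftrightarrow (-1,1)$, DRD $\leftrightarrow (1,-1)$, DLD $\leftrightarrow (-1,-1)$: when the target sits exactly at a corner, the 4-face path passes through that cube vertex and has the same length as a specific 3-face path; small target perturbations into an appropriate wedge then make the 4-face path strictly shorter than \emph{every} 3-face path.

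The first step, for each corner $C$, is to identify the \emph{matching} 3-face path that agrees with the 4-face path at $C$ and verify it is the strictly shortest 3-face path at $C$. Both claims reduce to arithmetic with Table~\ref{ta:LSPlen}: the matching identity (e.g.\ $|\text{RR}|^2 = |\text{RUR}|^2$ at $(1,1)$) follows by direct substitution, and the three gaps against the competitors are worked out case by case. For LUL and DRD, every gap is strictly positive throughout $\mathcal{T}$, with the decisive gap of the form $-4(s_1+s_2)$, strictly positive whenever $S \ne (0,0)$. For RUR and DLD, the decisive gap is $4(s_1-s_2)$, strictly positive off the hypotenuse of $\mathcal{T}$ but vanishing on it. Wherever the gap is strict, continuity keeps the matching 3-face path the unique shortest 3-face in a neighborhood of $C$.

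The second step is the linear inequality that carves out the sector. Because corresponding entries of Table~\ref{ta:LSPlen} share their coefficients of $x^2$ and $y^2$, the difference $|\text{matching 3-face}|^2 - |\text{4-face}|^2$ is linear in $(x,y)$, and the matching identity makes it vanish at $C$; hence the boundary of the half-plane ``4-face $<$ matching 3-face'' is a line through $C$. A brief slope-and-sign check --- at $(1,1)$, for instance, the boundary for RUR versus RR has positive slope $(2+s_2-s_1)/(4-s_1-s_2)$, with one adjacent edge of the base face on the ``4-face wins'' side and the other on the ``4-face loses'' side --- shows that this half-plane meets the interior of the base face near $C$ in a nonempty wedge. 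Intersecting the wedge with a sufficiently small disk about $C$ yields the required open circular sector; by step one, every point in it is a 4FSP endpoint.

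The last step handles the diagonal case $s_1 = s_2$. At the diagonal corners $(1,1)$ and $(-1,-1)$, the gap $4(s_1-s_2)$ collapses, so UU (respectively LL) joins the matching 3-face in tying the 4-face at the corner, and any 4FSP sector would now require the 4-face path to beat \emph{two} 3-face paths simultaneously. Redoing the half-plane computation for the additional competitor --- the inequality $|\text{RUR}|^2 < |\text{UU}|^2$ reduces on the hypotenuse to $4(1-s_1)(x-1) > 0$, forcing $x > 1$ --- places the required region outside the base face, so no sector survives at the diagonal corners (matching Lemma~\ref{l:rollseq}'s exclusions of RUR and DLD). The off-diagonal corners $(-1,1)$ and $(1,-1)$ are unaffected because their decisive gap involves $s_1+s_2$, which stays strictly negative on the hypotenuse. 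The main obstacle is organizational bookkeeping: four corners, four matching identities, and two additional degenerate inequalities must each be verified with the correct slope sign and edge incidence so that the relevant half-plane lands on the correct side of the correct base-face edge.
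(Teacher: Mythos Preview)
Your argument for the \emph{positive} claim is correct and takes a genuinely different route from the paper.  The paper works at a single corner (say $C=(1,-1)$), compares $|SP|$ against \emph{all nine} other target-image distances $|SQ_i|$, invokes the $135^\circ$ decision-angle picture to carve out the wedge, and then runs a continuity argument on nine functions to conclude that $SP(\epsilon_h,\epsilon_v)$ is literally the shortest path in a sector.  You instead exploit the reduction already established in the text---a target is a 4FSP endpoint as soon as \emph{some} 4-face value beats all four 3-face values---so that at each corner you need only three strict gaps (matching 3-face versus the other three) plus one linear half-plane (4-face versus matching 3-face).  This buys you a cleaner, more algebraic argument and avoids the nine-inequality bookkeeping; the paper's approach buys the stronger statement that the specific 4-face path is the global minimizer, which you do not need for the proposition as stated.

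There is, however, a gap in your \emph{negative} claim for the diagonal corners.  When $s_1=s_2$ you correctly show that the half-plane $|\text{RUR}|^2<|\text{UU}|^2$ forces $x>1$, so RUR yields no sector at $(1,1)$ (and symmetrically DLD yields none at $(-1,-1)$).  But the proposition asserts that the diagonal corner has \emph{no} 4FSP-endpoint neighborhood at all, which requires ruling out the remaining feasible roll sequences DRD and LUL there as well.  Your parenthetical reference to the Lemma does not cover this: the Lemma only excludes RUR and DLD on the hypotenuse, while DRD and LUL remain live.  The fix is short---at $(1,1)$ one computes $|\text{DRD}|^2-|\text{RR}|^2=|\text{LUL}|^2-|\text{RR}|^2=8(s+1)>0$, so by continuity neither DRD nor LUL beats RR near $(1,1)$---but it must be said.  (The paper handles this differently, invoking the Corollary that no diagonal target is a 4FSP endpoint and then perturbing off the diagonal by continuity.)
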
 
 
\begin{proof}
The assumption that the source point~$ S $ is in the triangle~$ \mathcal{T} $ is still in force.
\begin{figure}[h!]
	\centering
	\begin{overpic}[scale=0.3]{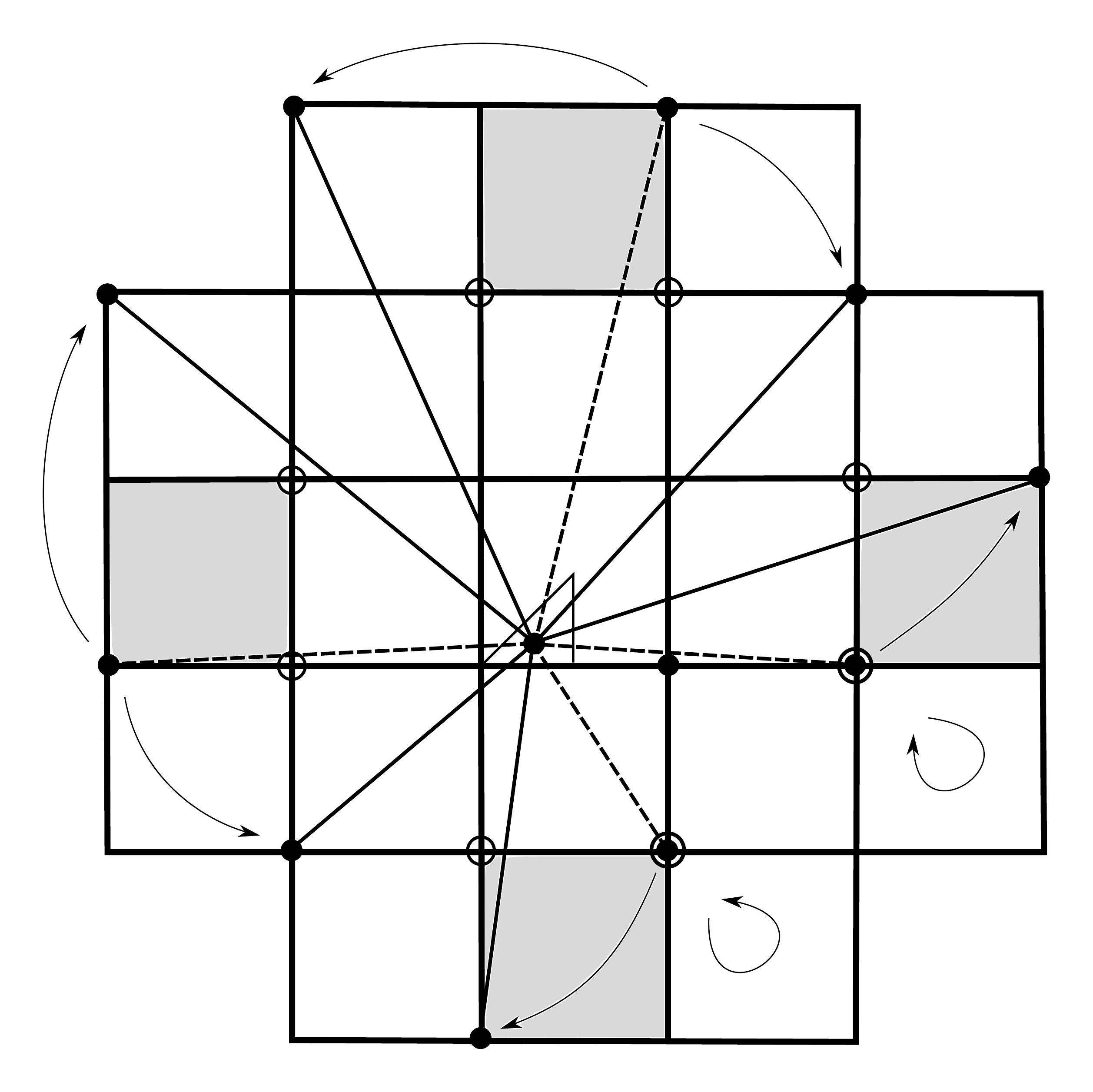} 
		\put(47.2,45.5){\footnotesize $ S $}
		\put(62,35){\footnotesize $ T=C $}
		\put(34,33){\footnotesize $ d_1 $}
		\put(55,33){\footnotesize $ d $}
		\put(68,41){\footnotesize $ d_2 $}
		\put(62,18){\footnotesize $ P $}
		\put(80,35){\footnotesize $ Q_1 $}
		\put(98,55){\footnotesize $ Q_2 $}
		\put(80,75){\footnotesize $ Q_3 $}
		\put(62,92){\footnotesize $ Q_4 $}
		\put(21,92){\footnotesize $ Q_5 $}
		\put(6,76){\footnotesize $ Q_6 $}
		\put(3,37){\footnotesize $ Q_7 $}
		\put(21,18){\footnotesize $ Q_8 $}
		\put(42,0){\footnotesize $ Q_9 $}
	\end{overpic}
	\caption{\label{f:tarptcor} Unfolding grid with source point~$ S $ in the triangle~$ \mathcal{T} $ and target point~$ T $ at the corner~$C= (1,-1) $}
\end{figure}  
We begin by finding the shortest paths in the unfolding grid from~$ S $ to each of the transformed images of the  base face vertices.  We then perturb~ $ T $ slightly, moving it off the vertex and into the interior of the  base face, where bona fide target points reside. The idea is that for small enough perturbations, the inequalities for~$ T $ that hold at the vertices will persist.

Assume first that~$ S $ is not on the hypotenuse of~$ \mathcal{T}$. There are four base face  vertices, with analogous situations at each one.  Here we give the details for the target point at the lower right corner vertex~$C=(1,-1)$ of the  base face.  Figure~\ref{f:tarptcor} illustrates the general unfolding grid situation for a source point~$ S $ in the triangle~$ \mathcal{T} $, the target point~$ T $ at the corner~$C$, and the ten target point images~$ P$ and~$Q_i $,~$ 1 \le i \le 9 $.  The dotted lines run from the source point to target-point images obtained by reflecting in the lines~$ x=\pm 2 $ and~$ y=\pm 2 $, corresponding to the roll sequences~RR, DD, LL, and~UU.  (If the target point was in the interior of the  base face, these dotted lines would be the 3-face \LS-paths.) The solid lines are obtained from the dotted line endpoints by corner moves.  The shaded faces are the ones that are rotated to realize corner moves, through either $ \pm 90^\circ $, around centers marked with open circles. The arrow arcs specify which solid path is associated with which dotted path via a corner move. The ``loops'' at~$ P $ and~$ Q_1 $ indicate that these points are fixed by the corner moves that use~$ P $ and~$ Q_1 $ as their respective centers of rotation.

In what follows, we use absolute value signs to denote path length.  We claim that for all positions of~$ S $ within the triangle~$ \mathcal{T} $, the path~$ SP $ is the unique shortest path to a cube vertex. Figure~\ref{f:tarptcor} indicates that we have to check nine inequalities~$ \abs{SP} < \abs{SQ_i} $,~$ 1 \le i \le 9 $,  to confirm this. Here we illustrate the inequality checks by comparing~$ SP $, with length~$ d $, to two of the nine possible alternatives.  One alternative,~$ SQ_8 $ with length~$ d_1 $, goes from the source point to~$ (-3,-3) $ in the unfolding grid, and another alternative, ~$ SQ_1 $ with length~$ d_2 $, goes from the source point to~$ (3,-1) $.  Taking~$ (x,y) $ for the source-point coordinates (note that previously~$ (x,y) $ has denoted a target point), we have
\begin{align*}
	d^2     &= (x-1)^2 + (y+3)^2, \\
	{d_1}^2 &= (x+3)^2 + (y+3)^2, \\
	{d_2}^2 &=  (x-3)^2 + (y+1)^2.	
\end{align*}
Basic algebra indicates that (1)~$ d < d_1 $ if and only if~$ x > -1 $, which is true for every source point in the interior of the base face, and (2)~$ d \le d_2  $ if and only if~$ y \le -x $, with $ d=d_2 $ if and only if~$ S=(0,0) $.  In Case~(2), the triangle~$ \mathcal{T} $, containing the source point is entirely below the line~$ y=-x $ except for a single point of contact at~$ (0,0) $, which we've excluded earlier.  For every other possible source point on the hypotenuse or in the interior of the triangle~$ \mathcal{T} $, the condition~$ y \le -x $ is satisfied and yields $ d < d_2 $. Analogous verifications with the other  seven inequalities indicate that the path~$ SP $ is the shortest \LS-path, so the closest target face vertex in the unfolding grid is at~$P=(1,-3) $ and no other vertex is as close.

Our strategy is now to budge the target point~$ T $ a small distance off the vertex into the interior of the  base face in a way that produces a 4-face \LS-path whose length is close enough to the original length~$ \lvert SP \rvert $ to be still shorter than all the other \LS-paths.  We plot the position of the target point relative to~$ C=(1,-1) $, so the displaced target point will be~$ T(\epsilon_h, \epsilon_v)=(1-\epsilon_h, -1+\epsilon_v) $ with~$ \epsilon_h,\, \epsilon_v > 0 $.  These choices mean that we now have $ T(0,0)=C $.  The displaced target point~$ T(\epsilon_h, \epsilon_v) $ has an an unfolding grid image~$ P(\epsilon_h, \epsilon_v) $ obtained by reflecting in the line $ y=-2 $ and rotating~$ 90^\circ $ counterclockwise around~$ P=P(0,0) $.

Of course, when~$ T(\epsilon_h, \epsilon_v) $, moves, all the other transformed images, originally the~$ Q_i $, move as well.  So we now denote the corresponding images by~$ Q_i(\epsilon_h, \epsilon_v) $, with the understanding that the original~$ Q_i $'s are now~$ Q_i(0,0) $.

If we want~$ T(\epsilon_h, \epsilon_v) $ to be the endpoint of a 4FSP, it is necessary (but not sufficient!) for the 4-face \LS-path~$ SP(\epsilon_h, \epsilon_v) $ to be shorter than its associated 3-face \LS-path. That 3-face \LS-path is not one of the~$ SQ_i $'s, since the associated pair only occurs because the motion of~$ T $ off of~$ C $ causes the path~$ SP $ to ``bifurcate'' into a 3-face and a 4-face \LS-path. In Figure~\ref{f:ffpaths}\,(a), the target point~$ T $ has been moved off~$ C $.  The dotted path~$ SP $ corresponding to the target point lying on~$ C $ now ``bifurcates'' into two paths, the 3-face \LS-path~$ SP' $ and the 4-face \LS-path~$ SP''$ obtained from~$ SP' $ by a corner move.  We want to at least keep the 4-face \LS-path shorter than the 3-face \LS-path.    This additional requirement imposes a~$ 135^\circ $ decision-angle condition on the image point~$P''= P(\epsilon_h, \epsilon_v) $; the constrained positions of~$ P(\epsilon_h, \epsilon_v) $  determine the lower shaded region in Figure~\ref{f:ffpaths}\,(b). That shaded region determines an equivalent upper shaded region of the base face.  From now on, we require that the variations in position governed by~$ \epsilon_h $ and~$ \epsilon_v $ keep~$ T(\epsilon_h, \epsilon_v) $---and so automatically~$ P(\epsilon_h, \epsilon_v) $---inside their respective shaded regions.
\begin{figure}[h!]
	\centering
	\begin{overpic}[scale=0.16]{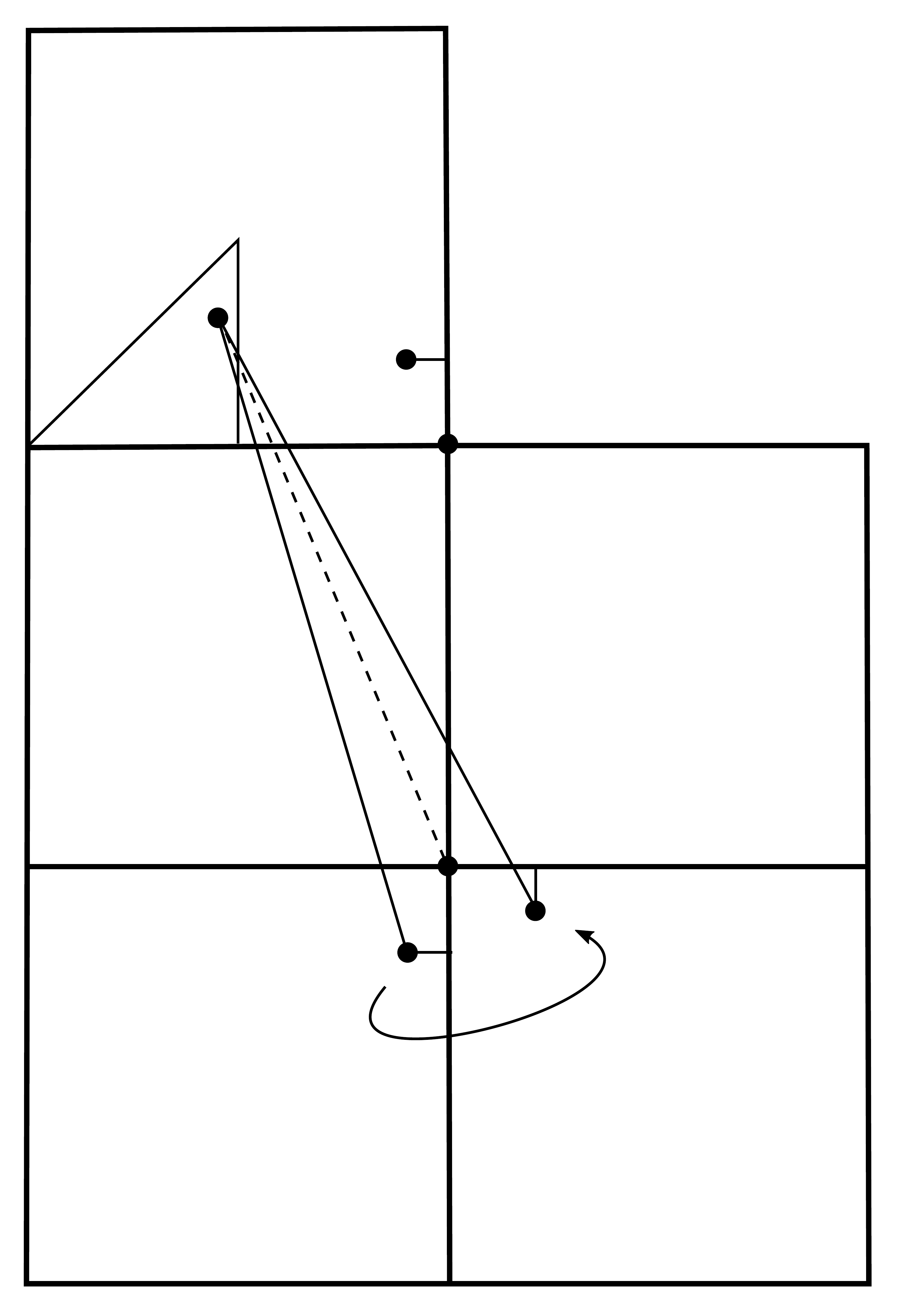} 
		\put(7,67){\footnotesize $\mathcal{T}$ }
		\put(14,72){\footnotesize $ S $}
		\put(29,35){\footnotesize $ P $ }
		\put(27,72){\footnotesize $ T $}
		\put(36,61){\footnotesize $ C $}
		\put(29.5,23.5){\footnotesize $ P' $}
		\put(38,26){\footnotesize $ P'' $}
		\put(38,19){\scriptsize  corner move}
		\put(32,-2){\footnotesize (a)}
    \end{overpic}
\quad	
	\begin{overpic}[scale=0.16]{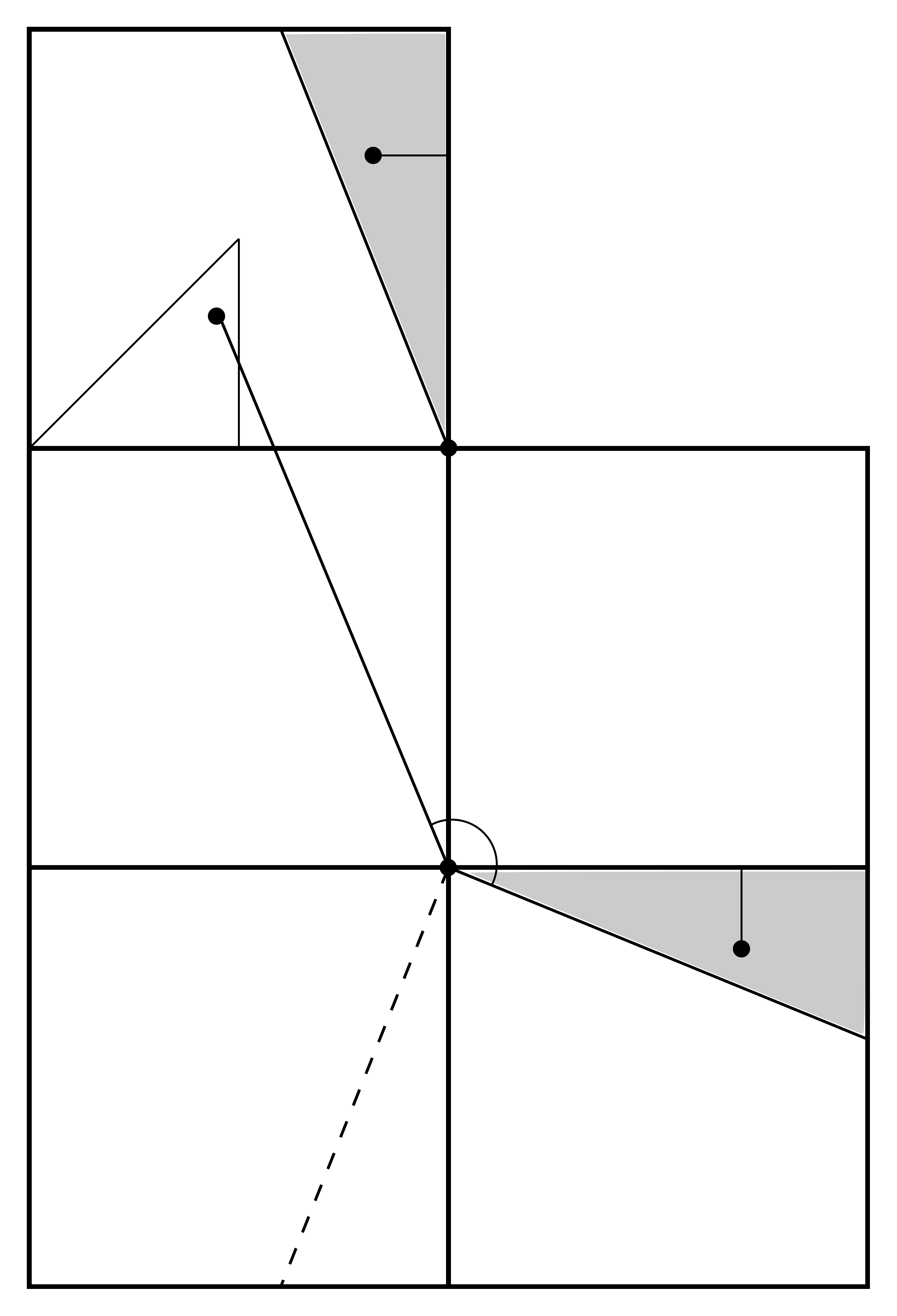} 
		\put(7,67){\footnotesize $\mathcal{T}$ }
		\put(10,87){\footnotesize $ T(\epsilon_h,\epsilon_v) $}
		\put(14,72){\footnotesize $ S $}
		\put(30,90){\footnotesize $ \epsilon_h $}
		\put(35,79){\footnotesize $ \epsilon_v $}
		\put(36,61){\footnotesize $ C=(1,-1) $}
		\put(36,38){\footnotesize $ 135^\circ $}
		\put(29,30){\footnotesize $ P $ }
		\put(58,30){\footnotesize $ \epsilon_h $}
		\put(45,36){\footnotesize $ \epsilon_v $}
		\put(34.5,20){\footnotesize $ P''=P(\epsilon_h,\epsilon_v) $}
		\put(32,-2){\footnotesize (b)}
		
	\end{overpic}
\caption{Locating potential 4-face shortest paths \label{f:ffpaths}}
\end{figure}

The fact that the path~$ SP $ is the shortest distance from~$ S $ to any of the unfolding grid images of~$ C $ is encoded in the inequalities
\begin{equation} \label{e:ineq1}
\abs{SQ_i(0,0)} - \abs{SP(0,0)} > 0, \quad 1 \le i \le 9, 
\end{equation} 
Now that the target point has been moved away from~$ C $, these inequalities have a new set of left-hand sides
$ \abs{SQ_i(\epsilon_h, \epsilon_v)} - \abs{SP(\epsilon_h, \epsilon_v)} $, together with the restriction of~$ P(\epsilon_h, \epsilon_v) $ to its shaded region. Each of these path-length differences is a continuous function~$ \varphi_i(\epsilon_h, \epsilon_v) $ that is positive at~$ (0,0) $, because at~$ (0,0) $ we get Equations~\eqref{e:ineq1}. Hence each~$ \varphi_i(\epsilon_h, \epsilon_v) $ must be positive in some neighborhood of~$ (0,0) $.  Taking the intersection of the open shaded decision-angle region and the nine neighborhoods of positivity, we obtain a single open circular-sector neighborhood~$ \mathcal{N} $.  Choosing~$ \epsilon_h $ and~$ \epsilon_v $ small enough to put~$ T(\epsilon_h, \epsilon_v) \in \mathcal{N} $, we have 
 \[
\abs{SQ_i(\epsilon_h, \epsilon_v)} - \abs{SP(\epsilon_h, \epsilon_v)} > 0, \quad 1 \le i \le 9,
\] 
and these updated inequalities, together with the decision angle region restriction, insure that~$ SP(\epsilon_h, \epsilon_v) $ determines a 4-face \LS-path that is the shortest path on the cube surface from~$ S $ to~$ T(\epsilon_h, \epsilon_v) $.  It follows that the entire open circular sector consists of endpoints of 4FSP's from~$ S $, and this verifies the proposition in the case the source and target are not both on a face diagonal.

The argument given above doesn't change if the source point is on the hypotenuse of triangle~$ \mathcal{T} $ and the target point is initially at~$ C $ as before. In view of the symmetries of the square, this shows that when the source point is on a base face diagonal (but not at the centroid) there will be 4FSP endpoint neighborhoods at the off-diagonal corners of the base face.

When the source point is on a base face diagonal, there is an argument, analogous to the one given earlier, that there are neighborhoods of the diagonal endpoints in which no  point is the endpoint of a 4FSP.  The point C from the previous argument is placed at the end of the diagonal containing~$ S $.  In this position, as we have seen in the corollary, the path inequalities favor a 3-face \LS-path, and the same type of continuity argument, in which the point~$ T $ is budged off of~$ C $, indicates that for small enough changes of position, the collection of path inequalities does not undergo any order changes. This means that when the source point is on a face diagonal, the endpoints of that diagonal  have  neighborhoods whose points cannot be the endpoints of 4FSP's.

\end{proof}

The existence of base face corner neighborhoods, all of whose points are endpoints of 4FSP's, shows that the upper bounds on the number of neighborhoods established earlier are always attained.  If the source point is not on a base face diagonal, then there are four neighborhoods, one at each corner of the base face. If the source point is on a base face diagonal, there are two neighborhoods, one on each off-diagonal corner of the base face.  Note, however, that these neighborhoods can be arbitrarily small, and so may not show up in a numerical plot.  Moreover, corner neighborhoods can be large enough to merge, creating a single neighborhood anchored to each of two target face vertices, as we see in the first plot of Figure~\ref{f:4fp}.

\section{Appendix: Program Listings}

\subsection{Program: endpoint face maps.}
The routine used to plot the 4FSP regions in Figure~\ref{f:4fp}  is at \url{https://tinyurl.com/y2rwnbyp}. The user has to enter coordinates for the source point~$ (S_1,S_2) $ on lines~5 and~6.  The program uses inequalities among the entries of Table~\ref{ta:LSPlen} to plot the regions in the target face whose points are the endpoints of 4FSP's from the specified source point. Here is a listing.
{\small
\begin{alltt}
	from sage.plot.scatter_plot import ScatterPlot
	var('x, y,  S_1, S_2' )
	
	# Source point coordinates
	S_1 = # Insert in number between -1 and 1
	S_2 = # Insert in number between -1 and 1
	
	# Path inequalities
	RDR1  = (x-S_2-2)^2 + (y-S_1+4)^2 < (x+S_1-4)^2 + (y-S_2)^2 
	RDR2  = (x-S_2-2)^2 + (y-S_1+4)^2 < (x+S_2-2)^2 + (y+S_1-4)^2  
	RDR3  = (x-S_2-2)^2 + (y-S_1+4)^2 < (x+S_2-4)^2 + (y+S_1-2)^2
	RDR4  = (x-S_2-2)^2 + (y-S_1+4)^2 < (x-S_2+4)^2 + (y-S_1-2)^2
	RDR5  = (x-S_2-2)^2 + (y-S_1+4)^2 < (x-S_1)^2   + (y+S_2-4)^2
	RDR6  = (x-S_2-2)^2 + (y-S_1+4)^2 < (x-S_2+2)^2 + (y-S_1-4)^2   
	RDR7  = (x-S_2-2)^2 + (y-S_1+4)^2 < (x+S_2+2)^2 + (y+S_1+4)^2
	RDR8  = (x-S_2-2)^2 + (y-S_1+4)^2 < (x+S_1+4)^2 + (y-S_2)^2 
	RDR9  = (x-S_2-2)^2 + (y-S_1+4)^2 < (x+S_2+4)^2 + (y+S_1+2)^2 
	RDR10 = (x-S_2-2)^2 + (y-S_1+4)^2 < (x-S_2-4)^2 + (y-S_1+2)^2 
	RDR11 = (x-S_2-2)^2 + (y-S_1+4)^2 < (x-S_1)^2   + (y+S_2+4)^2
	#------------------------------------------------------------
	RUR1 = (x+S_2-2)^2  + (y+S_1-4)^2 < (x+S_1-4)^2 + (y-S_2)^2 
	RUR2 = (x+S_2-2)^2  + (y+S_1-4)^2 < (x-S_2-2)^2 + (y-S_1+4)^2
	RUR3 = (x+S_2-2)^2  + (y+S_1-4)^2 < (x+S_2-4)^2 + (y+S_1-2)^2
	RUR4 = (x+S_2-2)^2  + (y+S_1-4)^2 < (x-S_2+4)^2 + (y-S_1-2)^2
	RUR5 = (x+S_2-2)^2  + (y+S_1-4)^2 < (x-S_1)^2   + (y+S_2-4)^2
	RUR6 = (x+S_2-2)^2  + (y+S_1-4)^2 < (x-S_2+2)^2 + (y-S_1-4)^2   
	RUR7 = (x+S_2-2)^2  + (y+S_1-4)^2 < (x+S_2+2)^2 + (y+S_1+4)^2
	RUR8 = (x+S_2-2)^2  + (y+S_1-4)^2 < (x+S_1+4)^2 + (y-S_2)^2 
	RUR9 = (x+S_2-2)^2  + (y+S_1-4)^2 < (x+S_2+4)^2 + (y+S_1+2)^2 
	RUR10 = (x+S_2-2)^2 + (y+S_1-4)^2 < (x-S_2-4)^2 + (y-S_1+2)^2 
	RUR11 = (x+S_2-2)^2 + (y+S_1-4)^2 < (x-S_1)^2   + (y+S_2+4)^2
	#------------------------------------------------------------
	URU1 = (x+S_2-4)^2  + (y+S_1-2)^2 < (x+S_1-4)^2 + (y-S_2)^2 
	URU2 = (x+S_2-4)^2  + (y+S_1-2)^2 < (x-S_2-2)^2 + (y-S_1+4)^2
	URU3 = (x+S_2-4)^2  + (y+S_1-2)^2 < (x+S_2-2)^2 + (y+S_1-4)^2
	URU4 = (x+S_2-4)^2  + (y+S_1-2)^2 < (x-S_2+4)^2 + (y-S_1-2)^2
	URU5 = (x+S_2-4)^2  + (y+S_1-2)^2 < (x-S_1)^2   + (y+S_2-4)^2
	URU6 = (x+S_2-4)^2  + (y+S_1-2)^2 < (x-S_2+2)^2 + (y-S_1-4)^2   
	URU7 = (x+S_2-4)^2  + (y+S_1-2)^2 < (x+S_2+2)^2 + (y+S_1+4)^2
	URU8 = (x+S_2-4)^2  + (y+S_1-2)^2 < (x+S_1+4)^2 + (y-S_2)^2 
	URU9 = (x+S_2-4)^2  + (y+S_1-2)^2 < (x+S_2+4)^2 + (y+S_1+2)^2 
	URU10 = (x+S_2-4)^2 + (y+S_1-2)^2 < (x-S_2-4)^2 + (y-S_1+2)^2 
	URU11 = (x+S_2-4)^2 + (y+S_1-2)^2 < (x-S_1)^2   + (y+S_2+4)^2
	#------------------------------------------------------------
	ULU1 = (x-S_2+4)^2  + (y-S_1-2)^2 < (x+S_1-4)^2 + (y-S_2)^2 
	ULU2 = (x-S_2+4)^2  + (y-S_1-2)^2 < (x-S_2-2)^2 + (y-S_1+4)^2
	ULU3 = (x-S_2+4)^2  + (y-S_1-2)^2 < (x+S_2-2)^2 + (y+S_1-4)^2
	ULU4 = (x-S_2+4)^2  + (y-S_1-2)^2 < (x+S_2-4)^2 + (y+S_1-2)^2
	ULU5 = (x-S_2+4)^2  + (y-S_1-2)^2 < (x-S_1)^2   + (y+S_2-4)^2
	ULU6 = (x-S_2+4)^2  + (y-S_1-2)^2 < (x-S_2+2)^2 + (y-S_1-4)^2   
	ULU7 = (x-S_2+4)^2  + (y-S_1-2)^2 < (x+S_2+2)^2 + (y+S_1+4)^2
	ULU8 = (x-S_2+4)^2  + (y-S_1-2)^2 < (x+S_1+4)^2 + (y-S_2)^2 
	ULU9 = (x-S_2+4)^2  + (y-S_1-2)^2 < (x+S_2+4)^2 + (y+S_1+2)^2 
	ULU10 = (x-S_2+4)^2 + (y-S_1-2)^2 < (x-S_2-4)^2 + (y-S_1+2)^2 
	ULU11 = (x-S_2+4)^2 + (y-S_1-2)^2 < (x-S_1)^2   + (y+S_2+4)^2
	#------------------------------------------------------------
	LUL1 = (x-S_2+2)^2  + (y-S_1-4)^2 < (x+S_1-4)^2 + (y-S_2)^2 
	LUL2 = (x-S_2+2)^2  + (y-S_1-4)^2 < (x-S_2-2)^2 + (y-S_1+4)^2
	LUL3 = (x-S_2+2)^2  + (y-S_1-4)^2 < (x+S_2-2)^2 + (y+S_1-4)^2
	LUL4 = (x-S_2+2)^2  + (y-S_1-4)^2 < (x+S_2-4)^2 + (y+S_1-2)^2
	LUL5 = (x-S_2+2)^2  + (y-S_1-4)^2 < (x-S_1)^2   + (y+S_2-4)^2
	LUL6 = (x-S_2+2)^2  + (y-S_1-4)^2 < (x-S_2+4)^2 + (y-S_1-2)^2 
	LUL7 = (x-S_2+2)^2  + (y-S_1-4)^2 < (x+S_2+2)^2 + (y+S_1+4)^2
	LUL8 = (x-S_2+2)^2  + (y-S_1-4)^2 < (x+S_1+4)^2 + (y-S_2)^2 
	LUL9 = (x-S_2+2)^2  + (y-S_1-4)^2 < (x+S_2+4)^2 + (y+S_1+2)^2 
	LUL10 = (x-S_2+2)^2 + (y-S_1-4)^2 < (x-S_2-4)^2 + (y-S_1+2)^2 
	LUL11 = (x-S_2+2)^2 + (y-S_1-4)^2 < (x-S_1)^2   + (y+S_2+4)^2
	#------------------------------------------------------------
	LDL1 = (x+S_2+2)^2  + (y+S_1+4)^2 < (x+S_1-4)^2 + (y-S_2)^2 
	LDL2 = (x+S_2+2)^2  + (y+S_1+4)^2 < (x-S_2-2)^2 + (y-S_1+4)^2
	LDL3 = (x+S_2+2)^2  + (y+S_1+4)^2 < (x+S_2-2)^2 + (y+S_1-4)^2
	LDL4 = (x+S_2+2)^2  + (y+S_1+4)^2 < (x+S_2-4)^2 + (y+S_1-2)^2
	LDL5 = (x+S_2+2)^2  + (y+S_1+4)^2 < (x-S_1)^2   + (y+S_2-4)^2
	LDL6 = (x+S_2+2)^2  + (y+S_1+4)^2 < (x-S_2+4)^2 + (y-S_1-2)^2 
	LDL7 = (x+S_2+2)^2  + (y+S_1+4)^2 < (x-S_2+2)^2 + (y-S_1-4)^2
	LDL8 = (x+S_2+2)^2  + (y+S_1+4)^2 < (x+S_1+4)^2 + (y-S_2)^2 
	LDL9 = (x+S_2+2)^2  + (y+S_1+4)^2 < (x+S_2+4)^2 + (y+S_1+2)^2 
	LDL10 = (x+S_2+2)^2 + (y+S_1+4)^2 < (x-S_2-4)^2 + (y-S_1+2)^2 
	LDL11 = (x+S_2+2)^2 + (y+S_1+4)^2 < (x-S_1)^2   + (y+S_2+4)^2
	#------------------------------------------------------------
	DLD1 = (x+S_2+4)^2  + (y+S_1+2)^2 < (x+S_1-4)^2 + (y-S_2)^2 
	DLD2 = (x+S_2+4)^2  + (y+S_1+2)^2 < (x-S_2-2)^2 + (y-S_1+4)^2
	DLD3 = (x+S_2+4)^2  + (y+S_1+2)^2 < (x+S_2-2)^2 + (y+S_1-4)^2
	DLD4 = (x+S_2+4)^2  + (y+S_1+2)^2 < (x+S_2-4)^2 + (y+S_1-2)^2
	DLD5 = (x+S_2+4)^2  + (y+S_1+2)^2 < (x-S_1)^2   + (y+S_2-4)^2
	DLD6 = (x+S_2+4)^2  + (y+S_1+2)^2 < (x-S_2+4)^2 + (y-S_1-2)^2 
	DLD7 = (x+S_2+4)^2  + (y+S_1+2)^2 < (x-S_2+2)^2 + (y-S_1-4)^2
	DLD8 = (x+S_2+4)^2  + (y+S_1+2)^2 < (x+S_1+4)^2 + (y-S_2)^2 
	DLD9 = (x+S_2+4)^2  + (y+S_1+2)^2 < (x+S_2+2)^2 + (y+S_1+4)^2
	DLD10 = (x+S_2+4)^2 + (y+S_1+2)^2 < (x-S_2-4)^2 + (y-S_1+2)^2 
	DLD11 = (x+S_2+4)^2 + (y+S_1+2)^2 < (x-S_1)^2   + (y+S_2+4)^2
	#------------------------------------------------------------
	DRD1 = (x-S_2-4)^2  + (y-S_1+2)^2 < (x+S_1-4)^2 + (y-S_2)^2 
	DRD2 = (x-S_2-4)^2  + (y-S_1+2)^2 < (x-S_2-2)^2 + (y-S_1+4)^2
	DRD3 = (x-S_2-4)^2  + (y-S_1+2)^2 < (x+S_2-2)^2 + (y+S_1-4)^2
	DRD4 = (x-S_2-4)^2  + (y-S_1+2)^2 < (x+S_2-4)^2 + (y+S_1-2)^2
	DRD5 = (x-S_2-4)^2  + (y-S_1+2)^2 < (x-S_1)^2   + (y+S_2-4)^2
	DRD6 = (x-S_2-4)^2  + (y-S_1+2)^2 < (x-S_2+4)^2 + (y-S_1-2)^2 
	DRD7 = (x-S_2-4)^2  + (y-S_1+2)^2 < (x-S_2+2)^2 + (y-S_1-4)^2
	DRD8 = (x-S_2-4)^2  + (y-S_1+2)^2 < (x+S_1+4)^2 + (y-S_2)^2 
	DRD9 = (x-S_2-4)^2  + (y-S_1+2)^2 < (x+S_2+2)^2 + (y+S_1+4)^2
	DRD10 = (x-S_2-4)^2 + (y-S_1+2)^2 < (x+S_2+4)^2 + (y+S_1+2)^2 
	DRD11 = (x-S_2-4)^2 + (y-S_1+2)^2 < (x-S_1)^2   + (y+S_2+4)^2

	# Region plots
	DRD = region_plot([DRD1, DRD2, DRD3, DRD4, DRD5, DRD6, DRD7, 
	                   DRD8, DRD9, DRD10, DRD11], (x,-0.999,0.999), 
	                   (y,-0.999,0.999), 
	                   plot_points=200, incol='grey' )
	DLD = region_plot([DLD1, DLD2, DLD3, DLD4, DLD5, DLD6, DLD7,
	                   DLD8, DLD9, DLD10, DLD11], (x,-0.999,0.999), 
	                   (y,-0.999,0.999), 
	                   plot_points=200, incol='grey' )
	LDL = region_plot([LDL1, LDL2, LDL3, LDL4, LDL5, LDL6, LDL7,
	                   LDL8, LDL9, LDL10, LDL11], (x,-0.999,0.999), 
	                   (y,-0.999,0.999),
	                   plot_points=200, incol='grey' )
	LUL = region_plot([LUL1, LUL2, LUL3, LUL4, LUL5, LUL6, LUL7,
	                   LUL8, LUL9, LUL10, LUL11], (x,-0.999,0.999), 
	                   (y,-0.999,0.999),
	                   plot_points=200, incol='grey' )
	ULU = region_plot([ULU1, ULU2, ULU3, ULU4, ULU5, ULU6, ULU7,
	                   ULU8, ULU9, ULU10, ULU11], (x,-0.999,0.999), 
	                   (y,-0.999,0.999),
	                   plot_points=200, incol='grey' )
	URU = region_plot([URU1, URU2, URU3, URU4, URU5, URU6, URU7,
	                   URU8, URU9, URU10, URU11], (x,-0.999,0.999), 
	                   (y,-0.999,0.999),
	                   plot_points=200, incol='grey' )
	RUR = region_plot([RUR1, RUR2, RUR3, RUR4, RUR5, RUR6, RUR7,
	                   RUR8, RUR9, RUR10, RUR11], (x,-0.999,0.999), 
	                   (y,-0.999,0.999),
	                   plot_points=200, incol='grey' )
	RDR = region_plot([RDR1, RDR2, RDR3, RDR4, RDR5, RDR6, RDR7,
	                   RDR8, RDR9, RDR10, RDR11], (x,-0.999,0.999), 
	                   (y,-0.999,0.999),
	                   plot_points=200, incol='grey' )

	# Source point    
	P = scatter_plot([[S_1,S_2]], facecolor = 'grey')
	
	# Combined plot of all regions
	show(P+DRD+DLD+LDL+LUL+ULU+URU+RUR+RDR, frame=True, aspect_ratio=1)
\end{alltt} 
} 
\subsection*{Program: source point probability distribution.} 
Unions of convex polygons have well-defined areas, and this makes it possible to assign to each possible source point the probability that there will be a target point whose shortest path traverses four faces. This probability is found by dividing the union of all of the 4FSP polygonal regions by the area of the face of the cube. By computing this probability for each source point, we find a probability distribution that can be displayed in the form of a contour plot (seen in Figure~\ref{f:4faceProp}).

In computing these areas and probabilities we have settled for estimations, as  exact computations seem very difficult. We use a program written in MATLAB. The program utilizes source and target points that are uniformly distributed 0.01 units away from each other on the source and target faces of the cube. For each target point, the program calculates the shortest path(s) from the source point, identifies whether any of those paths crosses four faces, and keeps count of the ones that do. After repeating this process and so counting 4FSP's from the given source point to all target points, the resulting count is divided by the total number of target points tested; this is the probability estimate for a given source point.

The program applies this process to all of the source points on the cube, and outputs three column vectors of data: the $ x $-component of the source point, the $ y $-component of the source point, and the probability for the given source point at~$ (x,y) $. This data is then reconfigured into matrices to be readable by MATLAB’s contour function, which outputs Figure~\ref{f:4faceProp}. Here is a listing.

\medskip\noindent\textbf{Cube Code for a single source point}
\begin{alltt}\small
	S1 = input('Enter the x coordinate of the source point ');
	S2 = input('Enter the y coordinate of the source point ');
	while S1 < -1 || S1>1
	S1 = input('Please enter a valid x coordinate for the source point ');
	end
	while S2 < -1 || S2>1
	S2 = input('Please enter a valid y coordinate for the source point ');
	end
	for i= -1:.02:1  %i is the "row":    the y-coord of the targ pts
	for j= -1:.02:1  %j is the "column": the x-coord of the targ pts
	   %------------------------------------------------------------------
	   % Note that the program runs through the target points that 
	   % were projected onto the source square and will ultimately 
	   % be plotted back on this square. However, we must first 
	   % translate to the 12 locations first to determine the 
	   % shortest paths.
	   %------------------------------------------------------------------
	[HR1, HR2, HL1, HL2, VU1, VU2, VD1, VD2, HR41A, HR42A, HR41B,
	 HR42B, HL41A, HL42A, HL41B, HL42B, VU41A, VU42A, VU41B, VU42B, 
	 VD41A, VD42A, VD41B, VD42B]=TargetCoords(j,i); 
	   %------------------------------------------------------------------
	   % Function takes in the coordinate of the target point and outputs 
	   % all of the 12 translations.
	   %------------------------------------------------------------------
	[minDistVal, minIndex] = MinDistance(S1, S2, HR1, HR2, HL1, HL2, VU1, 
	VU2, VD1, VD2, HR41A, HR42A, HR41B, HR42B, HL41A, HL42A, HL41B, HL42B, 
	VU41A, VU42A, VU41B, VU42B, VD41A, VD42A, VD41B, VD42B);
	   %------------------------------------------------------------------ 
	   % Finds the minimum distance from the 12 translations to the source 
	   % point, gives the index.
	   %------------------------------------------------------------------ 
	Plotting(minIndex, j, i) % Plots point, determines which color to assign
	end
	end
	plot(S1,S2,'s','MarkerFaceColor','cyan','MarkerSize', 10); 
	   % Plots the source point for context.
	
\end{alltt}

\medskip\noindent\textbf{Target Coordinates Function}
{\small
\begin{alltt}
	function [HR1, HR2, HL1, HL2, VU1, VU2, VD1, VD2, HR41A, HR42A, HR41B, 
	          HR42B, HL41A, HL42A, HL41B, HL42B, VU41A, VU42A, VU41B, VU42B, 
	          VD41A, VD42A, VD41B, VD42B] = TargetCoords(T1, T2)
	   %------------------------------------------------------------------
	   % TargetCoords finds the target point coordinates for each roll 
	   % Horizontal left and right and Vertical up and down.
	   % Following previous notations, 1 is for x-values, 2 is for y-values
	   % HR: Horizontal Right, HL: Horizontal Left, VU: Vertical Up, VD: Vertical
	   % Down.
	   %------------------------------------------------------------------
	T1 = round(T1,3);
	T2 = round(T2,3);
       % 3-Face Paths:
	HR1 = 4 - T1;
	HR2 = T2;
	HL1 = -4 -T1;
	HL2 = T2;
	VU1 = T1;
	VU2 = 4-T2;
	VD1 = T1;
	VD2 = -4-T2;
	   %------------------------------------------------------------------
	   % 4-Face Paths:
	   % Notation: The HR, HL, VU, VD are the same from above, as are the 1s and
	   % 2s. The 4 represents the 4-face path and the A,B is for each 4-face
	   % path associated with the general "rolling out".
	   %------------------------------------------------------------------
	HR41A = 4 - T2;
	HR42A = 2 - T1;
	HR41B = T2 + 4;
	HR42B = T1 - 2;
	HL41A = T2 - 4;
	HL42A = T1 + 2;
	HL41B = -T2 - 4;
	HL42B = -2 - T1;
	VU41A = 2 - T2;
	VU42A = 4 - T1;
	VU41B = T2 - 2;
	VU42B = T1 + 4;
	VD41A = 2 + T2;
	VD42A = T1 - 4;
	VD41B = -2 - T2;
	VD42B = -4 -T1;
	end
\end{alltt}
}
\medskip\noindent\textbf{Minimum Distance Function}
{\small
\begin{alltt}
function [minDistVal, minIndex] = MinDistance(S1, S2, HR1, HR2, HL1, HL2, VU1, VU2,
	                                              VD1, VD2, HR41A, HR42A, HR41B, HR42B,
	                                              HL41A, HL42A, HL41B, HL42B, VU41A,
	                                              VU42A, VU41B, VU42B, VD41A, VD42A,
	                                              VD41B, VD42B)
	   %------------------------------------------------------------------
	   % MinDistance finds all distances between the rolled out points and source
	   % points. These distances are stored in the array Dist and then the minimum
	   % distance and the index it occured at will be found. 
	   % First four values in the array are 3-face paths, the rest are four.
	   %------------------------------------------------------------------
	Dist = 1:12;
	
   % 3-Face Paths
	Dist(1) = sqrt((HR1-S1)^2+(HR2-S2)^2);
	Dist(2) = sqrt((HL1-S1)^2+(HL2-S2)^2);
	Dist(3) = sqrt((VU1-S1)^2+(VU2-S2)^2);
	Dist(4) = sqrt((VD1-S1)^2+(VD2-S2)^2);
	
   % 4-Face Paths
	Dist(5)  = sqrt((HR41A-S1)^2+(HR42A-S2)^2);
	Dist(6)  = sqrt((HR41B-S1)^2+(HR42B-S2)^2);
	Dist(7)  = sqrt((HL41A-S1)^2+(HL42A-S2)^2);
	Dist(8)  = sqrt((HL41B-S1)^2+(HL42B-S2)^2);
	Dist(9)  = sqrt((VU41A-S1)^2+(VU42A-S2)^2);
	Dist(10) = sqrt((VU41B-S1)^2+(VU42B-S2)^2);
	Dist(11) = sqrt((VD41A-S1)^2+(VD42A-S2)^2);
	Dist(12) = sqrt((VD41B-S1)^2+(VD42B-S2)^2);
	
	Dist       = round(Dist, 10);
	minDistVal = min(Dist);
	minIndex   = find(minDistVal == Dist);
	end
	\end{alltt}
}
\medskip\noindent\textbf{Plotting the cube function}
{\small
\begin{alltt}
function Plotting(minIndex, T1, T2)
   %------------------------------------------------------------------
   % Plotting checks to see if the min index < 4. If it is, the target points
   % (T1,T2) are plotted! Refer to the MinDistance function for each shortest
   % path index value.
   % Strict 4FSP's plotted in dots.
   %------------------------------------------------------------------
	if length(minIndex)==1
	if minIndex > 4
	scatter(T1,T2,"b."); %
	hold on;
	%end
   %------------------------------------------------------------------
   % If the strict 3FSP should be plotted in red, uncomment the following 
   % section and remove the prior end.
   %------------------------------------------------------------------  
	else
	scatter(T1,T2,".");
	hold on;
	end
	else
   %------------------------------------------------------------------  
   % Proper lax target points: when a 3FSP and its corresponding 4FSP are
   % the same length--plotted in green *
   %------------------------------------------------------------------  
	if 
	(ismember(1,minIndex) && (ismember(5,minIndex)  || ismember(6, minIndex)))  ||
	(ismember(2,minIndex) && (ismember(7,minIndex)  || ismember(8, minIndex)))  ||
	(ismember(3,minIndex) && (ismember(9,minIndex)  || ismember(10, minIndex))) ||
	(ismember(4,minIndex) && (ismember(11,minIndex) || ismember(12, minIndex)))
	scatter(T1,T2,"g*");
	hold on;
   %------------------------------------------------------------------    
   % Improper lax target points: when a 3FSP and a different 4FSP are the
   % same length--plotted in magenta *
   %------------------------------------------------------------------  
	elseif (improperLaxPoints(minIndex)==1) 
	scatter(T1, T2,"m*");
	hold on;
   % If there are 2 or more 3FSP's they are plotted in black * 
	elseif (threeShortestPath(minIndex)==1) 
	scatter(T1, T2,"k*");
	hold on;
	end
	end
\end{alltt}
}

\medskip\noindent\textbf{To determine 3FSP's}
{\small
\begin{alltt}
	function [result] = threeShortestPath(minIndex)
    %------------------------------------------------------------------  
    % threeShortest runs all conditions for when two (or more) 3-face paths are
    % shortest
    %------------------------------------------------------------------  
	if ismember(1,minIndex) && (ismember(2,minIndex) || ismember(3,minIndex) ||
	                            ismember(4,minIndex))
	result = 1;
	elseif ismember(2, minIndex) && (ismember(3,minIndex) || ismember(4,minIndex))
	result = 1;
	elseif ismember(3, minIndex) && ismember(4,minIndex)
	result = 1;
	else
	result=0;
	end
	end
	\end{alltt}
}

\medskip\noindent\textbf{To determine improper lax points}
{\small
\begin{alltt}
function [result] = improperLaxPoints(minIndex)
    % improperLaxPoints runs all the conditions for improper lax points,
	if (ismember(1,minIndex) && (ismember(7,minIndex)   || ismember(8, minIndex)  ||
	    ismember(9, minIndex) || ismember(10, minIndex) || ismember(11, minIndex) ||
	    ismember(12, minIndex)))
	result=1;
	elseif ismember(2,minIndex) && (ismember(5,minIndex)   || ismember(6, minIndex)  ||
	       ismember(9, minIndex) || ismember(10, minIndex) || ismember(11, minIndex) ||
	       ismember(12, minIndex))
	result=1;
	elseif ismember(3,minIndex) && (ismember(5,minIndex)  || ismember(6, minIndex)  ||
	       ismember(7, minIndex) || ismember(8, minIndex) || ismember(11, minIndex) ||
	       ismember(12, minIndex))
	result=1;
	elseif ismember(4,minIndex) && (ismember(5,minIndex)  || ismember(6, minIndex) ||
	       ismember(7, minIndex) || ismember(8, minIndex) || ismember(9, minIndex) ||
	       ismember(10, minIndex))
	result=1;
	else
	result=0;
	end
	
\end{alltt}
}

\medskip\noindent\textbf{To find the probabilities of 4FSP's for each source point}
{\small
\begin{alltt}
    %------------------------------------------------------------------  
    % CubeProbDist runs like CubeCode, except instead of plotting all of the
    % points that do not have only 3FSP's connecting them to the
    % given source point, this program keeps count of the number and divides it
   	% by the total number of points for the estimated probability of a 4FSP at 
   	% a given source point. 
    %------------------------------------------------------------------  
totalPoints=(2/.0025 + 1)^2; 
    %------------------------------------------------------------------  
   	% Note that the .0025 is the interval and can be adjusted to whatever 
   	% fineness desired, as long as all of the .0025 values are adjusted.
   	% Below are the vectors where the values are stored.
   	%------------------------------------------------------------------  
X = zeros(totalPoints,1);
Y = zeros(totalPoints, 1);
Prob = zeros(totalPoints,1);
currentPoint = 0; 
   	%------------------------------------------------------------------  
    % currentPoint is used to index the values that are stored in the 
    % vectors above.
    %------------------------------------------------------------------  
for S2 = -1:.0025:1   % S2 is the y-coord of the source point.
for S1 = -1:.0025:1   % S1 is the x-coord of the source point.
currentPoint = currentPoint + 1;
Pcount=0;
   	%------------------------------------------------------------------   
    % Pcount counts the number of target points that have 4FSP's to the source point, 
    % here it is being reset to 0 with each new source point.
    %------------------------------------------------------------------  
S1 = round(S1,4);   % To deal with precision errors.
S2 = round(S2,4);
    % The i and j represent the target points
for i = -1:.0025:1  % i is the "rows"
for j= -1:.0025:1   % j is the "columns"
i=round(i,4);       % To deal with precision errors
j=round(j,4);
	[HR1, HR2, HL1, HL2, VU1, VU2, VD1, VD2, HR41A, HR42A, HR41B, HR42B, HL41A,
	 HL42A,HL41B, HL42B, VU41A, VU42A, VU41B, VU42B, VD41A, VD42A, VD41B,
	 VD42B] = TargetCoords(j,i);
[minDistVal, minIndex] = MinDistance(S1, S2, HR1, HR2, HL1, HL2, VU1, VU2, 
                                     VD1, VD2, HR41A, HR42A, HR41B, HR42B, 
                                     HL41A, HL42A, HL41B, HL42B, VU41A, VU42A, 
                                     VU41B, VU42B, VD41A, VD42A, VD41B, VD42B);
Pcount = ProbCount(minIndex, j, i, Pcount);
end
end
X(currentPoint,1) = S1;
Y(currentPoint,1) = S2;
Prob(currentPoint,1) = (Pcount/totalPoints)*100;	end
if S2 == .5
fprintf("At .5")
end	
end
    %------------------------------------------------------------------
   	% We've later accessed the X, Y, and Prob in the workspace to run with
   	% contourplots or/and to store in an Excel file.
   	%------------------------------------------------------------------
	
\end{alltt}
}

\medskip\noindent\textbf{Keeping count of the 4FSP's for a source point} 
{\small
\begin{alltt}
function [Pcount] = ProbCount(minIndex, T1, T2, count) 
   	% ProbCount keeps a count of the number of 4FSP's on the cube.
if minIndex > 4
Pcount = count + 1;
    %------------------------------------------------------------------
   	% Proper lax target points: when a 3FSP and its corresponding 4FSP are
   	% the same length.
   	%------------------------------------------------------------------
elseif (ismember(1,minIndex) && (ismember(5,minIndex)  || ismember(6, minIndex)))  ||
       (ismember(2,minIndex) && (ismember(7,minIndex)  || ismember(8, minIndex)))  ||
       (ismember(3,minIndex) && (ismember(9,minIndex)  || ismember(10, minIndex))) ||
       (ismember(4,minIndex) && (ismember(11,minIndex) || ismember(12, minIndex)))
Pcount = count + 1; 	
elseif (improperLaxPoints(minIndex)==1) 
    %------------------------------------------------------------------
    % Improper lax target points: when a 3FSP and a different 4FSP are 
    % the same length.
    %--------------------------------------------------------------
Pcount = count + 1;
else
Pcount = count;
end
\end{alltt}
}

\medskip\noindent\textbf{Plotting the contour plot from data}
{\small
\begin{alltt}
    %------------------------------------------------------------------	
   	% Rearranging the data to be plotable with MATLAB's contour and contour3
   	% functions
   	% X, Y, and prob are column vectors of data that have been imported or in
   	% the workspace after running the CubeProbDist code.
   	%------------------------------------------------------------------
a = unique(X) ; nx = length(a) ;
b = unique(Y) ; ny = length(b) ;
Z = reshape(prob,ny,nx) ;
   	% For a 3d contour plot, add a 3 after the contour in the next line
c = contour(a,b,Z,50);
grid on
   	% Formatting the contour plot
pbaspect([1 1 1])
colormap(gray)
clabel(c,'manual');
\end{alltt}
}

\end{document}